\newtheorem{thm}{Theorem}[section]
\newtheorem{prop}[thm]{Proposition}
\newtheorem{cor}[thm]{Corollary}
\newtheorem{lem}[thm]{Lemma}
\newtheorem{qu}[thm]{Question}
\newtheorem{fact}[thm]{Fact}
\theoremstyle{definition}
\newtheorem{de}[thm]{Definition}
\newtheorem{ex}[thm]{Example}
\newtheorem{rem}[thm]{Remark}
\newtheorem{con}[thm]{Conjecture}
\renewenvironment{proof}{\par\noindent {\em Proof: }}{\hfill$\Box$\medskip}
\theoremstyle{plain}
\newcommand{\cO}{\ensuremath{\mathcal{O}} }
\newcommand{\cB}{\ensuremath{\mathcal{B}} }
\newcommand{\cM}{\ensuremath{\mathcal{M}} }
\newcommand{\cN}{\ensuremath{\mathcal{N}} }
\newcommand{\cS}{\ensuremath{\mathcal{S}} }
\newcommand{\cT}{\ensuremath{\mathcal{T}} }
\newcommand{\cL}{\ensuremath{\mathcal{L}} }
\newcommand{\ch}{\ensuremath{\textrm{char}} }
\newcommand{\N}{\ensuremath{\mathbb{N}} }
\author[K. Dupont]{Katharina Dupont$^*$}
\thanks{$^*$ Partially supported by a Minerva fellowship and a post-doctoral fellowship from Ben Gurion University}
\address{$^*$FB Mathematik und Statistik\\
Universit\"at Konstanz\\
78457 Konstanz\\
Germany} \email{katharina.dupont@uni-konstanz.de}
\author[A. Hasson]{Assaf Hasson$^{**}$}
\thanks{$^{**}$ Partially supported by GIF grant 2165/2011 and by ISF grant  181/16}
\address{$^{**}$Department of mathematics\\
Ben Gurion University of the Negev\\
Be'er Sehva\\
Israel} \email{hassonas@math.bgu.ac.il}
\author[S. Kuhlmann]{Salma Kuhlmann$^{***}$}
\thanks{$^{***}$ Partially supported by AFF grant from the University of Konstanz}
\address{$^{***}$FB Mathematik und Statistik\\
Universit\"at Konstanz\\
78457 Konstanz\\
Germany} \email{salma.kuhlmann@uni-konstanz.de}
\title[Definable valuations in NIP fields]{Definable Valuations induced by multiplicative subgroups and NIP Fields} %Preliminary
\subjclass[2010]{\subjclass[2010]{03C68} 03C68}
\keywords{NIP, strong NIP, definable valuations, henselian fields, Hahn series, dp-minimal fields}
\date{\today}
\begin{document}
\maketitle
\begin{abstract}
We study the algebraic implications of the non-independence property (NIP)
and variants thereof (dp-minimality) on infinite fields, motivated by the
conjecture that all such fields which are neither real closed nor separably
closed admit a (definable) henselian valuation. Our results mainly focus on Hahn
fields and build up on Will Johnson's preprint "dp-minimal fields", arXiv:
1507.02745v1, July 2015.
\end{abstract}

\section*{Introduction}

The classification of $\omega$-stable fields \cite[Theorem 3.1]{PoiGroups} and later of super-stable fields \cite{ChSh} is a cornerstone in the development of the interactions between model theory, algebra and geometry. Ever since, the classification of algebraic structures according to their model theoretic properties is a recurring theme in model theory. Despite some success in the  classification of groups of finite rank (with respect to various notions of rank), e.g..  \cite{EaKrPi},\cite[Section 4]{WaBook} (essentially, generalising results from the stable context), and most notably in the o-minimal setting (e.g., \cite{HrPePi} and many references therein) little progress has been made in the classification of infinite stable (let alone simple) fields. Indeed, most experts view the conjecture asserting that (super) simple fields are bounded (perfect) PAC, and even the considerably weaker conjecture that stable fields are separably closed to be out of the reach of existing techniques.

In the last decade or so the increasing interest in theories without the independence property (NIP theories), associated usually with the solution of Pillay's conjecture \cite{HrPePi} and with the study of algebraically closed valued fields, led naturally to analogous classification problems in that context. In its full generality, the problem of classifying NIP fields encompasses the classification of stable fields, and may be too ambitious. In \cite{Sh863}, as an attempt to find the right analogue of super-stability in the context of NIP theories, Shelah introduced the notion of \emph{strong NIP}. As part of establishing this analogy, Shelah showed \cite[Claim 5.40]{Sh863} that the theory of a separably closed field  that is not algebraically closed is not strongly NIP. In fact Shelah's proof actually shows that strongly NIP fields are perfect\footnote{Shelah's proof only uses the simple fact that if $\mathrm{char}(K)=p>0$ then either $K$ is perfect or $[K^\times: (K^\times)^p]$ is infinite. See, e.g., \cite[Remark 2.5]{KrVal}}. Shelah  conjectured \cite[Conjecture 5.34]{Sh863} that (interpreting its somewhat vague formulation) strongly NIP fields are real closed, algebraically closed or support a definable non-trivial (henselian)  valuation. Recently, this conjecture was proved\footnote{The existence of a definable valuation is implicit in Johnson's work. See Remark \ref{JohnsonDef}.} by Johnson \cite{JohnDPMin} in the special case of dp-minimal fields (and, independently, assuming the definability of the valuation, henselianity is proved in \cite{JaSiWa2015}).

The two main open problems in the field are: 
%\begin{introqu}\label{MainQuestion}
\begin{enumerate}
	\item Let $K$ be an infinite (strongly) NIP field that is neither separably closed nor real closed. Does $K$ support a non-trivial definable valuation?
	\item Are all (strongly) NIP fields henselian (i.e., admit some non-trivial henselian valuation) or, at least, t-henselian (i.e., elementarily equivalent in the language of rings, to a henselian field)?
\end{enumerate}
%\end{introqu}

%In analogy with the conjectured classification of simple fields, it seems reasonable to conjecture that (strongly) dependent fields that are neither real closed nor algebraically closed are (perfect) henselian fields.

A positive answer to Questions (2) would imply, for example, that strongly\footnote{F. Jahnke and S. Anscombe (private communication) informed us that similar results are obtained for NIP fields.} dependent fields are elementarily equivalent to Hahn fields over well understood base fields \cite[Theorem 3.11]{HaHaJa}: 
\begin{description}
	\item[Equi-characteristic] $\mathbb R((t^\Gamma))$, $\mathbb C((t^\Gamma))$ or $\overline{\mathbb F}_p((t^\Gamma))$. 
	\item[Finite residue field] $Q((t^\Gamma))$ where $Q$ is a p-adically closed field if the field admits a henselian valuation with finite residue field. 
	\item[Kaplansky] $L((t^\Gamma))$ where $L$ is a rank 1 Kaplansky field with residue field as in (1) above. 
\end{description} 
where in all cases $\Gamma$ is a strongly dependent ordered abelian group (see \cite{HalHas} for the classification of such groups).

%of positive characteristic are elementarily equivalent to Hahn fields of the form $\mathbb{\bar F}_p((t^\Gamma))$. In characteristic $0$, if not elementarily equivalent to $\mathbb R((t^\Gamma))$ or to $\mathbb C((t^\Gamma))$ they must support a non-trivial henselian valuation with a finite residue field. 

In view of the above, a natural strategy for studying Shelah's conjecture would be to, on the one hand, study the conjecture for Hahn fields (with dependent residue fields), as the key example and -- on the other hand -- using the information gained in the study of Hahn fields, try to generalise Johnson's results from dp-minimal fields to the strongly dependent setting. 

The simplest extension of Johnson's proof of Shelah's conjecture for dp-minimal fields would be to finite extensions of dp-minimal fields. 
Section \ref{dp-min} is dedicated to showing that this extension is vacuous, namely we prove that 
%\begin{introtheorem}
a finite extension of a dp-minimal field is again dp-minimal (see Theorem \ref{finite}).
%\end{introtheorem}
The proof builds heavily on Johnson's classification of dp-minimal fields.

Section \ref{Hahn} is dedicated to the study of (strongly) dependent Hahn fields. Collecting known results of the first author (based on unpublished work of Koenigsmann), we show that Hahn fields that are neither algebraically nor real closed support a definable non-trivial valuation with a $t$-henselian topology. We use Hahn fields to provide examples proving that perfection and boundedness -- the conjectural division lines for simple fields -- are not valid in the NIP case. Building on previous results of Delon \cite{DelHenselian}, B\'elair \cite{BelHenselian} and Jahnke-Simon \cite{JaSiTransfer} we construct the following examples (see Theorem \ref{examples}):
%\begin{introtheorem}
	There are NIP fields with the following properties:
	\begin{enumerate}
		\item A strongly NIP field that is not dp-minimal.
		\item A strongly NIP field $K$ such that $[K^\times: (K^\times)^q ]=\infty$ for some prime $q$.
		\item A perfect NIP field that is not strongly NIP.
		%\item An NIP field with an imperfect residue field.
		\item An unbounded strongly NIP field.
	\end{enumerate}
%\end{introtheorem}

%\emph{En passant} we point out that our example of an unbounded strongly NIP field shows that the result of \cite[Corollary 3.13]{KaSh}, stating that in a strongly NIP field $[K^\times: (K^\times)^q ]=\infty$ for at most finitely many $q$ is best possible.

%Building on results of Koenigsmann (reviewed and corrected in \cite{Du2016}) we show, Proposition \ref{ConjForHahn}, that Hahn series which are NIP support a definable non-trivial valuation with a $t$-henselian topology. In most cases we can show that, in fact, a henselian valuation is definable.

In the last two sections of the paper we turn to the problem of constructing definable valuations on (strongly) NIP fields. As Johnson's methods of \cite{JohnDPMin} do not seem to generalise easily even to the finite dp-rank case, we study a more general construction due to Koenigsmann. 

% We also point out that dp-minimal fields which are neither real closed nor separably closed also have a non-trivial definable valuation, allowing us to conclude that, restricted to the dp-minimal case, the V-topology constructed by Johnson, and the topology $\cT_G$ arising from Koenigsmann's construction (see Section \ref{prelims}) are equivalent.

%Motivated by the above observation, the last part of the paper studies more closely Koenigsmann's construction of the definable valuation, in the aim of suggesting a strategy for constructing a non-trivial definable valuation on NIP fields.

We give, provided the field $K$ is neither real closed nor separably closed (without further model theoretic assumptions), an explicit first order sentence $\psi_K$ in the language of rings such that $K\models \psi_K$ implies the existence of a non-trivial valuation ring definable (over the same parameters appearing in $\psi_K$) in the language of rings. As we will show (see the discussion following Corollary \ref{Q2toQ1}) if $K$ is $t$-henselian then $K\models \psi_K$. Thus to provide a positive answer to Question (1) it will suffice to show that any NIP field (infinite not real closed or separably closed) $K\models \psi_K$, which is also a necessary condition for a positive answer to Question (2). 

%Assuming a positive answer to Question (2), namely, that every NIP field is $t$-henselian, the sentence $K\models \psi_K$ is, in fact, equivalent to $K$ supporting a definable non-trivial valuation (Corollary \ref{Q2toQ1}). In that case it follows from Fact \ref{corVAxiomsnontrivialdefinable} that this definable valuation extends to a definable henselian valuation on some finite extension $L\ge K$.

Implicit in the work of Koenigsmann \cite{Koe2}, a sentence with roughly the same properties as $\psi_K$ above can certainly be extracted from \cite{Du2016}. However, the sentence $\psi_K$ obtained in Proposition \ref{PropSimplifiedVAxiomsnontrivialdefinable} of this paper is simpler in quantifier depth and in length. As a result the strategy proposed for tackling Question (1) above can be summarised as follows:

\begin{con}\label{fo}
	Let $K$ be an infinite field not separably closed. For any prime $q\neq \mathrm{char}(K)$ let $T_q:=(K^\times)^q+1$. Assume that
	\begin{enumerate}
		\item $T_q\neq K\setminus \{1\}$
		\item $\sqrt{-1}\in K$
		\item There exists $\zeta_q\in K$ a primitive $q$-th root of unity.
	\end{enumerate}
	and at least one of the following holds:
	\begin{enumerate}
		\item $K\models (\exists a_1,a_2)(\{0\}=a_1T_q\cap a_2 T_q)$
		\item $K\models (\forall a_1,a_2\exists b)(b\notin T_q\land b\in (a_1T_q\cap a_2T_q)-(a_1T_q\cap a_2T_q))$
		\item $K\models (\forall a_1,a_2\exists b)(b\notin T_q\land b\in (a_1T_q\cap a_2T_q)\cdot (a_1T_q\cap a_2T_q))$
		\item $K\models (\forall a_1,a_2\exists x,y)(xy\in a_1T_q\cap a_2T_q\land x\notin a_1T_q\cap a_2T_q\land y\notin a_1T_q\cap a_2T_q)$
	\end{enumerate}
	Then $K$ has IP.
\end{con}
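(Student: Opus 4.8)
The plan is to contrapose: assuming $K$ is NIP (indeed, it suffices to assume $K$ does not have IP), we want to show that \emph{none} of the four alternatives (1)--(4) can hold when the three standing hypotheses are in force. The underlying philosophy, going back to Koenigsmann's work (and, in the present paper, realised through the sentence $\psi_K$ of Proposition \ref{PropSimplifiedVAxiomsnontrivialdefinable}), is that the sets $T_q = (K^\times)^q + 1$ and the intersections $a_1 T_q \cap a_2 T_q$ are designed so that, over an NIP field which is not separably closed, they ``see'' a non-trivial definable valuation. Concretely, I would first invoke whatever statement in the paper packages Koenigsmann's construction: under hypotheses (1)--(3) of the conjecture, the family $\{a_1 T_q \cap a_2 T_q : a_1, a_2 \in K^\times\}$ generates (in a uniformly definable way) a non-trivial valuation ring $\cO_v$ of $K$, or at least a non-trivial definable subring whose fraction field is $K$. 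The point of (1) is to guarantee $T_q$ is a proper, "large" subset so that the construction is non-degenerate; the point of (2), $\sqrt{-1}\in K$, together with (3), the presence of $\zeta_q$, is to make the relevant multiplicative cosets behave like a valuation basis (this is exactly the role these assumptions play in Fact \ref{corVAxiomsnontrivialdefinable} and Proposition \ref{PropSimplifiedVAxiomsnontrivialdefinable}).

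Once a non-trivial definable valuation $v$ with valuation ring $\cO_v$ is produced from the $T_q$-data, each of the four alternatives should be read as asserting a combinatorial property of $\cO_v$ that \emph{fails} for any non-trivial valuation. For instance, alternative (1), $\{0\} = a_1 T_q \cap a_2 T_q$ for suitable $a_1, a_2$, would force the associated "small" sets to intersect trivially, which for a non-trivial valuation is impossible because the convex/ultrametric structure makes the relevant cosets intersect in a set containing arbitrarily small elements. Alternatives (2) and (3) say that the additive (resp. multiplicative) closure of $a_1 T_q \cap a_2 T_q$ always escapes $T_q$; but for a genuine valuation ring the corresponding set is (up to translation/scaling) closed under the relevant operation, contradicting this. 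Alternative (4) similarly asserts a failure of primality of the relevant set, again contradicted by the valuation-theoretic description. So the structure of the proof is: (i) from NIP $+$ not separably closed $+$ hypotheses (1)--(3), extract a non-trivial $v$ (this is where the bulk of the paper's machinery, and Johnson-type NIP input, is used); (ii) translate each of (1)--(4) into a statement about $\cO_v$; (iii) observe each such statement is incompatible with $v$ being non-trivial.

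The main obstacle, I expect, is step (i): getting an actual \emph{definable} non-trivial valuation purely from the hypotheses and NIP, rather than merely a definable subgroup or a topology. The hypothesis ``$K$ not separably closed'' is essential precisely here — it is what rules out the degenerate case where $T_q$-type sets carry no valuation-theoretic content, and it interfaces with the NIP assumption via the dichotomy results (in the spirit of Johnson's classification and the Jahnke--Simon--Walsberg transfer principles cited in the introduction) that an NIP field which is not separably closed and carries enough definable multiplicative structure must carry a definable valuation. A secondary difficulty is bookkeeping in step (ii): each of the four alternatives is phrased with a specific quantifier pattern (one with $\exists a_1, a_2$, the others with $\forall a_1, a_2 \exists \ldots$), and one must check that the valuation-theoretic contradiction is obtained with the matching quantifiers — e.g. for (1) it suffices to contradict the \emph{existence} of a bad pair, whereas for (2)--(4) one must exhibit, for \emph{every} $a_1, a_2$, a witness inside $T_q$, which is where the ``largeness'' of $T_q$ coming from hypothesis (1) is used again. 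Modulo these points, the argument is a contrapositive assembly of the paper's earlier results rather than a fresh construction.
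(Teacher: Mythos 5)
The statement you were asked to prove is stated in the paper as a \emph{conjecture} (Conjecture \ref{fo}); the authors do not prove it. It is their proposed first-order reformulation of Question (1): alternatives (1)--(4) are exactly the negations of the axioms \textbf{\textup{(V\,1)$'$}}, \textbf{\textup{(V\,3)$'$}}, \textbf{\textup{(V\,4)$'$}}, \textbf{\textup{(V\,6)$'$}} of Proposition \ref{PropSimplifiedVAxiomsnontrivialdefinable} (in the two-parameter form of Corollary \ref{remNG'}), so the conjecture asserts that NIP forces all of these axioms, hence a definable non-trivial valuation. There is therefore no proof in the paper to compare against, and a correct argument would be a new theorem, not an assembly of the paper's results.

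Your proposal has a genuine gap, located precisely at your step (i): you propose to extract, from NIP together with hypotheses (1)--(3), a non-trivial definable valuation whose topology is generated by the sets $a_1T_q\cap a_2T_q$, and only then to contradict each alternative. But by the discussion following Fact \ref{corVAxiomsnontrivialdefinable}, the existence of such a valuation is \emph{equivalent} to $\cN_G$ being a basis of a V-topology, i.e.\ to the simultaneous failure of alternatives (1)--(4); so step (i) assumes exactly what is to be proved, and steps (ii)--(iii) are then an unwinding of that assumption rather than independent content. Nothing in the paper delivers step (i) from NIP alone: Fact \ref{corVAxiomsnontrivialdefinable} and Proposition \ref{PropSimplifiedVAxiomsnontrivialdefinable} go in the opposite direction (from the axioms to the valuation); Corollaries \ref{groups} and \ref{Q2toQ1} require the additional hypothesis that $K$ is (t-)henselian, i.e.\ a positive answer to Question (2); and Johnson's results, which you invoke as ``Johnson-type NIP input'', apply only to dp-minimal fields. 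The sole unconditional NIP ingredient in the paper is the Keisler-measure argument of Lemma \ref{propV1ForGgenerel}, and it only rules out alternative (1), and only under the extra assumption $[K^\times:(K^\times)^q]<\infty$, which the conjecture does not grant. So as written your plan is circular; to make progress one would have to prove directly, from NIP, that each of \textbf{\textup{(V\,3)$'$}}, \textbf{\textup{(V\,4)$'$}}, \textbf{\textup{(V\,6)$'$}} (and \textbf{\textup{(V\,1)$'$}} without boundedness) holds for $T_q$, which is the open problem.
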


\noindent\emph{Acknowledgements.} We would like to thank I. Efrat, M. Hils, F. Jahnke, M. Kamensky, F.-V. Kuhlmann and P. Simon for several ideas, corrections and suggestions.

\section{Preliminaries}\label{prelims}

Throughout we use standard valued fields terminology and notation: $K, L, F$ will  be fields, $\cO_K$ will denote a valuation ring on $K$ with maximal ideal $\cM_K$ (we will drop the subscript $K$ if it is clear from the context). Valuations on $K$ will be denoted by $v,w$ and $\cO_v:=\left\{x\in K: v(x)\geq 0\right\}$, $\cM_v$ the valuation ring associated with $v$ and its maximal ideal respectively.  The reader is referred to any standard textbook on the subject (e.g., \cite{EnPr}) for more details. A non-trivial valuation $v$ on a field $K$ induces a Hausdorff field topology (generated by open balls $B_\gamma(x):=\{y:v(x)>\gamma\}$). It is well known that such topologies can be characterised:  
\begin{de}\label{Vtop}
	A collection of subsets $\cN$ of $K$ is a basis of 0-neighbourhoods for a \emph{V-topology} on $K$ if is satisfies the following axioms:
	\begin{description}
		\item[\textbf{\textup{(V\,1)}}]  $\bigcap \mathcal{N}:=\bigcap_{U\in\mathcal{N}}U=\left\{0\right\}$ and $\left\{0\right\}\notin\mathcal{N}$;
		\item[\textbf{\textup{(V\,2)}}]  $\forall\, U,\,V\ \exists\, W\   W\subseteq U\cap V$;
		\item[\textbf{\textup{(V\,3)}}] $\forall\, U\   \exists\,  V\   V-V\subseteq U$;
		\item[\textbf{\textup{(V\,4)}}] $\forall\, U\   \forall\, x,\,y\in K\   \exists\, V\   \left(x+V\right)\cdot\left(y+V\right)\subseteq x\cdot y+U$;
		\item[\textbf{\textup{(V\,5)}}]  $\forall\, U\   \forall\, x\in K^{\times}\  \exists\, V\   \left(x+V\right)^{-1}\subseteq x^{-1}+U$;
		\item[\textbf{\textup{(V\,6)}}]  $\forall\, U\   \exists\, V\   \forall\, x,\,y\in K\   (x\cdot y\in V\to (x\in U\, \vee\,  y\in U))$.
	\end{description}
\end{de}
It is not hard to check that if $(K,v)$ is a valued field (or a field with an absolute value) then the collection of open balls is a V-topology on $K$. More importantly, the converse is also true (see \cite[Appendix B]{EnPr}): any V-topology on a field $K$ arises in this way.

In the present paper we will investigate and apply a standard technique for constructing a V-topology on a field $K$ from a multiplicative sub-group $G\le K^\times$. We will be following a construction due to Koenigsmann, \cite{Koe2}, but the general method is well known (see \cite[\S11]{EfBook} and references therein). Fix $K$ an infinite field and let $G$ be a multiplicative subgroup of $K^\times$ with $G\neq K^\times$.

%We will be using the following notation and definitions:
%%\begin{de}\label{deDefinableValuation}
%If $\cL$ denotes a language and
% $\cL(K)$  the  extension of the language $\cL$ by constants for all  elements of $K$, we say that a valuation $v$ on $K$ is $\cL$-definable  if $\cO_v$ is $\cL(K)$-definable. Throughout the paper $\cL$ will be the language of rings.

%\begin{enumerate}[(a)]
%\item We call $\cO$ \emph{$\cL$-definable} (with parameters) or \emph{definable in $\cL$}, if there exists an
% $\cL(K)$-formula $\varphi(x)$ such that $\cO=\left\{x\in K: \varphi\left(x\right)\right\}$. We say $\varphi$ defines $\cO$.
%
%\item We call a valuation $v$ on a field $K$ \emph{$\cL$-definable} if $\cO_v$ is $\cL$-definable.
%\item We say $\cO$ (respectively  $v$) is $\cL$-\emph{definable without parameters} or \emph{parameter-free
% $\cL$-definable} if $\varphi$ as above is an $\cL$-formula.
%\item We call $\cO$ (respectively $v$) \emph{definable} if it is $\cL_\ring$-definable.
%\end{enumerate}
%\end{de}

Given a group $G\le K^\times$ we let $\cT_G$ be the coarsest topology for which $G$ is open and linear transformations are continuous. As shown in \cite[Theorem~3.3]{Du2016} $\cS_G:=\left\{a\cdot G+b: a\in K^\times, b\in K\right\}$ is a subbase of $\cT_G$. Hence
\[\cB_G:=\left\{\left.\bigcap_{i=1}^n\left(a_i\cdot G+b_i\right)\,\right|\, n\in \N,\, a_1,\ldots,a_n\in K^\times, b_1,\ldots,b_n\in K\right\}\] is a base for $\cT_G$.

A simple calculation shows that
\[
	\cN_G:=\left.\left\{U\in \cB_G \ \right|\ 0\in U\right\}
 	\:=\left.\left\{\bigcap_{i=1}^n a_i\cdot \left(- G+1\right) \ \right|\ n\in\N,\,  a_i\in K^\times\right\}
\]
is a base of neighbourhoods of zero for $\cT_G$.
If further $-1\in G$ then
\[
	\cN_G=\left\{\bigcap_{i=1}^n a_i\cdot \left(G+1\right): \ n\in\N,\, a_i\in K^\times\right\}.
\]

Throughout the paper $U,\,V$ and $W$, possibly with indices, will always denote elements of $\cN_G$.

 It follows from \cite[Lemma~3.6 and Corollary 3.8]{Du2016} that, if $\cT_G$ is a basis for a V-topology (see Fact \ref{corVAxiomsnontrivialdefinable} below) then already
 \[\left\{\left(a_1\cdot G+b_1\right)\cap\left(a_2\cdot G+b_2\right): a_1,a_2\in K^\times,\, b_1,b_2\in K\right\}\]
 is a base for $\cT_G$. Hence, if $-1\in G$, \[\cN_G':=\left\{\left(a_1\cdot \left(G+1\right)\right)\cap\left(a_2\cdot\left( G+1\right)\right): a_1,a_2\in K^\times\right\}\] is a base of the neighbourhoods of zero for $\cT_G$.
% The reason why we can choose to work with any base is, that all sentences we are interested in are local sentences (see \cite{PrZi1978} for the definition). In \cite[Theorem~1.1~(a)]{PrZi1978} it is shown that a local sentence holds for a filtered field $(K,\cF)$ if and only if it holds for $(K,\cB)$ where $\cB$ is any base of $\cF$.
 As in most of the paper it will be more convenient to work with arbitrary intersections, we will mostly choose to work with $\cN_G$. The advantage of the basis $\cN_G'$ is that, if $G$ is definable (as will be the case) it is a definable basis of $0$-neighbourhoods.
 
 The starting point of the present paper is the following result of Koenigsmann\footnote{A valuation $v$ on $K$ is $p$-henselian if it extends uniquely to $K(p)$ the compositum of all Galois extensions of $K$ of degree $p^n$ (any $n$). For the purposes of the present paper the fact that any henselian valuation is $p$-henselian will suffice. For more information see \cite{KoepHens}.}\footnote{As pointed out by the referee, a correct proof Koenigsmann's result can be found in \cite{JahKo}}.,  \cite{KoepHens}: 
 \begin{fact}\label{corVAxiomsnontrivialdefinable}
 	Let $K$ be a field of characteristic $p$ (possibly 0) and $q$ a prime different from $p$. Let $G:=(K^\times)^q\subsetneq K^\times $ and assume that $\zeta_p\in K$ for a primitive $q$th root of unity. Then $K$ is $q$-henselian if and only if $\cT_G$ is a basis for a $V$-topology, if and only if the canonical $p$-henselian valuation, $v_p$ is $\emptyset$-definable (in which case $\cT_G$ is the topology induced by $v_q$). 
 \end{fact}
\begin{proof}
	By \cite[Theorem 2.1]{KoepHens} $K$ is $p$-henselian if and only if $\cT_G$ generates the same topology as $v$ for some $p$-henselian valuation. By \cite[Main theorem]{JahKo} if $K$ is $p$-henselian then the canonical $p$-henselian valuation is definable. The statement concerning the topologies also follows from \cite[Theorem 2.1]{KoepHens}.
\end{proof}

In the above, and throughout, by \emph{definable} we mean \emph{definable in the language $\cL$ of rings} and by saying that a valuation $v$ on $K$ is definable we mean that $\cO_v$ is $\cL(K)$-definable (where $\cL(K)$  is the expansion of the language $\cL$ by constants for all  elements of $K$).

Let us now explain how the above fact will be applied. Let $K$ be an NIP field. We aim to find conditions for the existence of a definable non-trivial valuation on $K$. By \cite[Theorem II.4.11]{Sh1} (\cite[Observation 1.4]{Sh863}) if $T$ is (strongly) NIP then so is $T^{eq}$. Thus any finite extension of $K$ is also (strongly) NIP. It will suffice, therefore,  to find a definable non-trivial valuation on some
finite extension $L\ge K$ (since if $\mathcal O$ is a non-trivial valuation ring on $L$ then $\mathcal O\cap K$ is a non-trivial valuation ring in $K$). It is therefore, harmless to assume that $\sqrt{-1}\in K$. By
\cite[Theorem~4.4]{KaScWa} $K$ is Artin-Schreier closed. So
the same is true of any finite extension $L\ge K$. This implies (e.g., \cite[Lemma 2.4]{KrVal}\footnote{Krupinski's argument assumes that the field is perfect to conclude that it is algebraically closed. Descarding this additional assumption, and restricting to separable extensions the stronger result follows.}) that either $K$ is separably closed, or there exists some finite separable extension $L\ge K$ and $q\neq \mathrm{char}(K)$ such that $(L^\times)^q\neq L^\times$ (in fact, by \cite[Corollary 4.5]{KaScWa} $K$ has no finite separable extensions of degree divisible by $p$). Since $\sqrt{-1}\in L$ it follows that, letting $L(q)$ denote the $q$-closure of $L$, we have $[L(q):L]=\infty$ (\cite[Theorem 4.3.5]{EnPr}).  So extending $L$ a little more, there is no harm assuming that there exists $\zeta_q\in L$, a primitive $q$th root of unity. Thus, at the price of, possibly, losing the $\emptyset$-definability of the resulting valuation (because of the passage to the field $L$), the basic assumptions of Fact \ref{corVAxiomsnontrivialdefinable} are easily met. So that the application of this result reduces to proving that for $L$ and $q$ as above, $\cN_G$ is a $0$-neighbourhood basis for a V-topology on $L$. Thus, we get the following result (see also \cite{Du2016}):

\begin{cor}\label{groups}
	 Let $K$ be an NIP field that is neither separably closed nor real closed. Then there exists a finite separable field extension $L\ge K$ and a prime $q\neq \mathrm{char}(K)$  such that $(L^\times)^q\neq L^\times$ and $\zeta_q\in L$ for some primitive root of unity. If $K$ is $t$-henselian then for any such $L\ge K$ and $q$ the group $G_q(L):=(L^\times)^q$ satisfies conditions (V1)-(V6) of Definition \ref{Vtop}.
\end{cor}
\begin{proof}
	Assume first that $K$ is henselian, witnessed by a valuation $v$. Then, by the above discussion, as $K$ is neither real closed nor algebraically closed, there is some finite separable field extension $L\ge K$ and prime $q$ such that $G_q(L):=(L^\times)^q$ is a proper subgroup of $L^\times$ and $\zeta_q\in L$. Fix any such extension $L$. Since $v$ is henselian, it extends to a henselian valuation on $L$ which by abuse of notation we will also denote $v$. By \cite[Theorem 5.18]{Du2016} there exists a definable valuation $w$ on $L$ inducing the same topology as both $v$ and $\mathcal O_{G_q(L)}$. In particular $\mathcal O_{G_q(L)}$ is non-trivial. So, by the above discussion, $\mathcal N_{G_q(L)}$ is a basis for a $V$-topology, i.e., it satisfies (V1)-(V6), as required.

	In general, let $\mathcal K\succ K$ be $\aleph_1$-saturated. Then $\mathcal K$ is henselian. Let $L\ge K$ be a finite separable extension such that $G_{q}(L)$ is a proper subgroup. By the primitive element theorem there exists $\alpha\in L$ such that $L=K(\alpha)$. Let $\mathcal L:=\mathcal K(\alpha)$. Then $\mathcal L\succ L$ and $G_{q}(\mathcal L)$ is a proper subgroup. By what we have already shown the group $G_q(\mathcal L)$ satisfies conditions (V1)-(V6). So
	$\mathcal N_{G_q(\mathcal L)}'$ is a also a basis for the topology, so it satisfies the corresponding statements (V1)$'$-(V6)$'$. Since those are first order statements without parameters, they are also satisfied by $G_q(L)$, so $G_q(L)$ also satisfies (V1)-(V6) as required.
\end{proof}

We remind also that by a Theorem of Schmidt \cite[Theorem 4.4.1]{EnPr} any two henselian valuations on a non-separably closed field $K$ are dependent (i.e., generate the same $V$-topology). So we get:

\begin{cor}\label{Q2toQ1}
	Let $K$ be an NIP field that is neither real closed, nor separably closed. If $K$ is henselian, then $K$ supports a definable non-trivial valuation. Moreover, there exists a finite separable extension $L\ge K$ and a prime $q$ such that $G_q(L)\cap K$ generates the same $V$ topology as any henselian valuation on $K$.
%	Assume that all strongly NIP valued fields are henselian. Let $K$ be an NIP field that is neither separably closed nor real closed. Let $L\ge K$ be a finite extension such that $G_q(L)$ is a proper subgroup. Then $K$ supports a definable non-trivial valuation if and only if $G_q(L)$ satisfies (V1)-(V6).
\end{cor}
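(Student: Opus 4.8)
The plan is to deduce Corollary \ref{Q2toQ1} from Corollary \ref{groups} together with Fact \ref{corVAxiomsnontrivialdefinable} and Schmidt's theorem. First I would note that if $K$ is henselian then it is in particular $t$-henselian, so Corollary \ref{groups} applies: for every finite extension $L\ge K$ and every $q$ with $(L^\times)^q\neq L^\times$ the group $G_q(L)=(L^\times)^q$ satisfies (V1)--(V6), i.e.\ $\mathcal N_{G_q(L)}$ is a basis of $0$-neighbourhoods for a $V$-topology on $L$. As explained in the discussion preceding Fact \ref{corVAxiomsnontrivialdefinable}, since $K$ is neither real closed nor separably closed, after passing to a finite extension we may arrange that $\sqrt{-1}\in L$, that $(L^\times)^q\neq L^\times$ for some prime $q\neq\mathrm{char}(K)$, and that a primitive $q$th root of unity $\zeta_q$ lies in $L$ (when $q=2$ the condition $\sqrt{-1}\in L$ already forces $L$ to be non-euclidean). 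Then Fact \ref{corVAxiomsnontrivialdefinable} yields a non-trivial valuation $w$ on $L$, definable in the language of rings with parameters, whose $0$-neighbourhood basis is $\mathcal N_{G_q(L)}$; equivalently $\mathcal O_{G_q(L)}$ is non-trivial and induces the same topology.

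Next I would descend from $L$ to $K$. The valuation ring $\mathcal O_w\cap K$ is a valuation ring on $K$, and it is non-trivial: a finite extension cannot trivialise a non-trivial valuation, so $\mathcal O_w\cap K\ne K$. It is $\cL(K)$-definable because $\mathcal O_w$ is $\cL(L)$-definable and $L$ is a finite-dimensional $K$-vector space with a $K$-definable basis, so $\cL(L)$-definable subsets of $L$ restrict to $\cL(K)$-definable subsets of $K$. This already gives the first assertion, that $K$ supports a definable non-trivial valuation. For the "moreover" clause I would instead work directly with $G_q(L)\cap K$: one checks that $G_q(L)\cap K$ is again a multiplicative subgroup of $K^\times$, and that the trace topology $\cT_w|_K$ — which by the above equals the topology generated by $\mathcal O_{G_q(L)}$ restricted to $K$ — coincides with $\cT_{G_q(L)\cap K}$, or at any rate generates a non-trivial $V$-topology on $K$. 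Hence by the same mechanism (applying the $V$-topology machinery / Fact \ref{corVAxiomsnontrivialdefinable}-type reasoning now over $K$, or simply taking the associated valuation ring) there is a definable non-trivial valuation on $K$ whose $V$-topology is that induced by $G_q(L)\cap K$.

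Finally I would invoke Schmidt's theorem, quoted as \cite[Theorem 4.4.1]{EnPr}: on a non-separably closed field any two henselian valuations are dependent, i.e.\ induce the same $V$-topology. Since $K$ is henselian, fix any henselian valuation $v_0$ on $K$; it extends to a henselian valuation on $L$, which by \cite[Theorem 5.18]{Du2016} (as used in the proof of Corollary \ref{groups}) induces the same topology as $\mathcal O_{G_q(L)}$. Restricting to $K$, the $V$-topology of $v_0$ agrees with that of $\mathcal O_{G_q(L)}\cap K$, hence with that of $G_q(L)\cap K$; and by Schmidt every henselian valuation on $K$ is dependent with $v_0$, so generates that same $V$-topology. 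This establishes the "moreover" statement.

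The main obstacle I anticipate is the bookkeeping in the descent step: making precise that the trace on $K$ of the $V$-topology generated by $G_q(L)$ is exactly the $V$-topology generated by $G_q(L)\cap K$ (rather than merely a possibly coarser or finer topology), and handling the small field extensions needed to secure $\sqrt{-1}$, properness of $(L^\times)^q$, and $\zeta_q\in L$ simultaneously — all while tracking that definability is preserved under the finite extension. The $V$-topology axioms behave well under restriction to a subfield, so this should go through, but it is the step that requires care rather than being purely formal; everything else is an assembly of Corollary \ref{groups}, Fact \ref{corVAxiomsnontrivialdefinable}, and Schmidt's theorem.
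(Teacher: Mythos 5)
Your proposal is correct and follows essentially the same route as the paper: pass to a finite extension $L$ containing $\sqrt{-1}$ and $\zeta_q$ with $(L^\times)^q\neq L^\times$ (using Artin--Schreier closedness to rule out the degenerate case), note $L$ inherits henselianity, apply Corollary \ref{groups} together with Fact \ref{corVAxiomsnontrivialdefinable}, restrict the resulting definable non-trivial valuation to $K$, and settle the ``moreover'' clause via Schmidt's theorem and the topology comparison from \cite[Theorem 5.18]{Du2016}. Your write-up is in fact more explicit than the paper's (which compresses the descent and the ``moreover'' step into ``The corollary follows''), and the bookkeeping you flag is handled exactly as you suggest.
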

\begin{proof}
	There is no harm assuming that $\sqrt{-1}\in K$. 
	As above, if for all finite separable extensions $L$ and all $q\neq \mathrm{char}(K)$ we have $L^q=L$, we get that $K$ is separably closed, contradicting our assumption. So there are $L\ge K$, a finite extension, and $q$ such that $L^q\neq L$. Since $\sqrt{-1}\in K$ we get that $L(\zeta_q)^q\neq L(\zeta_q)$ for $\zeta_q$, a primitive $q$th root of unity. So there is no harm assuming $\zeta_q\in L$. Since $K$ is henselian, so is $L$. By the previous corollary, $G_q(L)$ generates on $L$ the same topology as any henselian valuation on $L$. The corollary follows.
\end{proof}

Let $L\ge K$ be as provided by the previous corollary. Then $L=K(\alpha)$ for some $\alpha$, and let $f(x)$ be its minimal polynomial over $K$. Let $\bar a$ be the coefficients of $f$. Then $L$ is $K$-interpretable over $\bar a$. So we let $\psi_K$ be the sentence (over $\bar a$) stating that $G:=(L^\times)^q$ satisfies axioms (V1)$'$-(V6)$'$ of a $V$-topology. By Fact \ref{corVAxiomsnontrivialdefinable} if $K\models \psi_K$ then $K$ supports a non-trivial $\bar a$-definable valuation. And if $K$ happens to be $t$-henselian (and, therefore, so is $L$) then $L$ is $q$-henselian, Fact \ref{corVAxiomsnontrivialdefinable} implies that $K\models \psi_K$. Therefore, if $K$ is NIP and we assume the conjecture that any infinite NIP field is ($t$)-henselian then $K\models \psi_K$. 

%Replacing (V1)-(V6) with (V1)$'$-(V6)$'$ as in the proof of Corollary \ref{groups} we get that modulo the conjecture that all NIP fields are henselian\footnote{The usage of NIP in the above corollaries can be replaced with the somewhat less natural assumption that the field and all of its separable extensions are Artin-Schreier closed.}, the existence of a definable non-trivial valuation (on a field that is neither real closed nor algebraically closed) can be stated in a single first order sentence, $\psi_K$, satisfied in $K$. Without assuming that all strongly NIP valued fields are henselian, we still get that $\psi_K$ implies the existence of a definable non-trivial valuation.

Assuming that, in the above discussion we do not have to pass to the separable extension $L$ (i.e., $K$ itself satisfies assumptions (1)-(3) of Conjecture \ref{fo}) we get that $K\models \psi_K$ implies the existence of a non-trivial $p$-henselian valuation on $K$ (for some $p$ explicit in $\psi_K$). It is therefore natural to ask: 
\begin{qu}
	If $(K,v)$ is NIP and $v$ is $p$-henselian (for some $p\neq \mathrm{char}(K)$). Is $v$ necessarily henselian? Does this follow, at least, from Shelah's conjecture?
\end{qu}

It is worth pointing out that by \cite[Remark 2.3]{KoepHens} there are fields that are $p$-henselian for all primes $p$ but not henselian. For any such field, $K$, the canonical $p$-henselian valuation (any $p$) is definable, inducing the topology $\cT_G$ for $G=(K^\times)^p$, but this definable valuation is not henselian. So in full generality, the definable valuations discussed in this paper need not be henselian. 

%In Section \ref{AxiomsRevisted} below we reduce $\psi_K$ in both quantifier depth (reducing it to an AE-sentence) and in length. This reduction is obtained without any model theoretic assumptions on $K$.

Throughout the paper we will be using without further reference the facts that strongly NIP fields are perfect, that NIP fields are Artin-Schreier closed, and that NIP valued fields of characteristic $p>0$ have a $p$-divisible value group (\cite[Proposition 5.4]{KaScWa}).

\section{dp-minimal fields}\label{dp-min}
Dp-minimal fields are classified in the main result of \cite{JohnDPMin}\footnote{Specific references to Johnson's paper below refer to the publicly available version of the paper, \cite{JohnDPMinArc}.}:
\begin{thm}[Johnson]\label{classification}
	A sufficiently saturated field $K$ is dp-minimal if and only if $K$ is perfect and there exists a valuation $v$ on $K$ such that:
	\begin{enumerate}
		\item $v$ is henselian.
		\item $v$ is defectless (i.e., any finite extension of $(L,v)$ over $(K,v)$ is defectless).
		\item The residue field $Kv$ is either algebraically closed of characteristic $p$ or elementarily equivalent to a local field of characteristic $0$.
		\item The valuation group $\Gamma_v$ is almost divisible, i.e., $[\Gamma_v: n\Gamma_v]<\infty$ for all $n$.
		\item If $\mathrm{char}(Kv)=p\neq \mathrm{char}(K)$  then $[-v(p),v(p)]\subseteq p\Gamma_v$.
	\end{enumerate}
\end{thm}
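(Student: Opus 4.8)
This is \cite[Theorem~1.2]{JohnDPMin}; the left-to-right direction is essentially the content of Johnson's paper, so I only indicate the strategy. \emph{From right to left} (the soft direction): suppose $K$ is perfect and $v$ is a henselian, defectless valuation on $K$ with $Kv$ and $\Gamma_v$ as in (3)--(5). Algebraically closed fields of characteristic $p$ have dp-rank~$1$, fields elementarily equivalent to a local field of characteristic~$0$ (finite extensions of $\Q_p$, and also $\R$ and $\C$) are dp-minimal, and almost divisible ordered abelian groups are dp-minimal. One then applies an Ax--Kochen--Ershov type transfer principle for dp-rank in henselian, defectless valued fields, in the spirit of \cite{JaSiTransfer} and the equicharacteristic theory of dp-minimal valued fields: in equicharacteristic the relevant $\mathrm{RV}$-structure is controlled by $Kv$ and $\Gamma_v$, while in mixed characteristic condition (5) is precisely what forces $Kv$ and $\Gamma_v$ to be glued together tightly enough inside $\mathrm{RV}$ that no extra dp-rank appears. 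Hence $(K,v)$, and a fortiori the pure field $K$, is dp-minimal.

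\emph{From left to right} (Johnson's theorem): let $K$ be sufficiently saturated and dp-minimal; note $K$ is perfect, as strongly NIP fields are. If $K$ is real closed then $K\equiv\R$, if $K$ is algebraically closed of characteristic~$0$ then $K\equiv\C$, and if $K$ is algebraically closed of characteristic~$p$ then $K$ is itself an algebraically closed field of characteristic~$p$; in each of these cases take $v$ trivial, and (1)--(5) hold. Otherwise the heart of the proof is to produce a non-trivial \emph{definable} henselian valuation on $K$, and the plan, following Johnson, is: (i) using the combinatorics of dp-minimality, a structural analysis of infinite definable subsets of $K$ (Swiss-cheese style decompositions), and the local behaviour of unary definable functions, single out a definable, Hausdorff, non-discrete \emph{field topology} $\cT$ on $K$, the ``canonical topology''; (ii) show $\cT$ is a $V$-topology, the crucial point being the $V$-axiom, which dp-minimality forces --- its failure would, roughly, split $\cT$ as a non-trivial product, contradicting dp-minimality; (iii) by the classification of $V$-topologies on fields \cite{EnPr}, $\cT$ is induced either by an archimedean absolute value --- whence, with dp-minimality, $K$ is real or algebraically closed after all --- or by a valuation $v$ whose ring is definable because $\cT$ is; (iv) show $v$ is henselian (equivalently, that $\cT$ is the topology of a henselian field), using that $K$ has no Artin--Schreier extensions, in the spirit of \cite{JaSiWa2015}. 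Once $v$ is in hand, a defect, a non-almost-divisible quotient of $\Gamma_v$, or a failure of (5) would each show up as extra dp-rank, so one arranges $v$ to be defectless, $\Gamma_v$ almost divisible (dp-minimal ordered abelian groups are), and (5) to hold. Finally $Kv$ is interpretable in $K$, hence dp-minimal; iterating the construction inside $Kv$ and coarsening $v$ accordingly reduces to a residue field with no non-trivial definable valuation: being perfect and Artin--Schreier closed, such a field is separably closed --- hence, in characteristic~$p$, algebraically closed --- while in characteristic~$0$ the archimedean/local-field analysis applies, the iteration halting at an algebraically closed field of characteristic~$p$ or a field elementarily equivalent to a local field of characteristic~$0$ (for instance $\Q_p$, whose residue field $\F_p$ is finite, so nothing further can be coarsened); this is (3).

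The main obstacle is steps (i)--(ii): manufacturing the canonical $V$-topology out of dp-minimality alone. This is where essentially all the work of \cite{JohnDPMin} sits --- it relies on the Swiss-cheese decomposition of definable sets and a delicate study of the local behaviour of definable functions --- and it is precisely the step that does not obviously survive passage to higher dp-rank, which is why Section~\ref{dp-min} and Corollary~\ref{groups} of the present paper take a different route to definable valuations.
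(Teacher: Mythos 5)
The paper gives no proof of this statement at all---it is quoted as Johnson's classification, Theorem~1.2 of \cite{JohnDPMin}---so deferring to Johnson, as you do, is exactly what the paper does. Your sketch of the two directions (the transfer argument for the soft direction, and Johnson's canonical topology $\rightarrow$ V-topology $\rightarrow$ definable henselian valuation $\rightarrow$ analysis of residue field, value group, defect and the mixed-characteristic condition for the hard one) is broadly faithful to his strategy, so there is nothing to correct here.
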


Given a dp-minimal field $K$ that is not strongly minimal, Johnson constructs an (externally definable) topology \cite[\S3]{JohnDPMinArc}, which he then proves to be a V-topology \cite[\S3 , \S 4]{JohnDPMinArc}. Pushing these results further he proceeds to show \cite[Theorem 5.14]{JohnDPMinArc} that $K$ admits a henselian topology (not necessarily definable). From this we immediately get:

\begin{cor}
	Any dp-minimal field is either real closed, algebraically closed or admits a non-trivial definable henselian valuation. In particular, the V-topology constructed by Johnson is definable and coincides with Koenigsmann's topology, $\cT_G(L)\cap K$, for some finite extension $L\ge K$ and some (equivalently, any)  $G:=(L^\times)^p$ such that $G\neq L^\times$.
\end{cor}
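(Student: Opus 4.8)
The plan is to deduce the corollary directly from the three ingredients already assembled in the excerpt: Johnson's construction of a henselian (possibly non-definable) topology on a dp-minimal field $K$ that is not strongly minimal, Fact \ref{corVAxiomsnontrivialdefinable}, and Corollary \ref{groups}. First I would dispose of the degenerate cases: if $K$ is finite it is not infinite, and if $K$ is strongly minimal then (being an infinite stable, hence ACF, field) it is algebraically closed, so we may assume $K$ is infinite and not strongly minimal; we may also assume $K$ is neither real closed nor separably closed, since in those cases there is nothing to prove. Note dp-minimal fields are NIP and perfect, so the hypotheses of Corollary \ref{groups} are within reach once we know $K$ is $t$-henselian.

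Next I would invoke Johnson's \cite[Theorem 5.14]{JohnDPMin}: $K$ carries a (not necessarily definable) henselian valuation topology. In particular $K$ is henselian, hence $t$-henselian, so Corollary \ref{groups} applies: passing to a finite extension $L \ge K$ (and, if necessary, enlarging it so that $\sqrt{-1} \in L$ and a primitive $q$th root of unity $\zeta_q \in L$ lies in $L$ for a prime $q$ with $(L^\times)^q \neq L^\times$ — such $q$ exists because $K$ is not separably closed and all its finite separable extensions are Artin-Schreier closed, as recalled in the paragraph before Corollary \ref{groups}), the group $G := G_q(L) = (L^\times)^q$ satisfies (V1)--(V6), i.e. $\cN_G$ is a $0$-neighbourhood basis for a V-topology on $L$. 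Since dp-minimality passes to finite extensions (indeed to $T^{eq}$, so to any interpretable structure), $L$ is again dp-minimal, but in any case we only need the field-theoretic statement already proved in Corollary \ref{groups}.

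Then I would apply Fact \ref{corVAxiomsnontrivialdefinable} to $L$ and $G$: its hypotheses ($\mathrm{char}(L) \neq q$; if $q=2$, $L$ not euclidean — which holds since $L$ contains $\sqrt{-1}$; existence of $\zeta_q \in L$; $\cN_G$ a $0$-neighbourhood basis for a V-topology) are exactly what we have arranged, so $L$ admits a non-trivial $\emptyset$-definable valuation $w$ for which $\cN_G$ is a neighbourhood basis at $0$. Its restriction $\cO_w \cap K$ is then a non-trivial definable valuation ring on $K$, and by the remark following Fact \ref{corVAxiomsnontrivialdefinable} the valuation $w$ induces the same topology as the canonical $\mathcal O_G = \mathcal O_{G_q(L)}$, i.e. Koenigsmann's topology $\cT_G(L)$. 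To identify this with Johnson's topology and to see it is definable on $K$ itself, I would use Schmidt's theorem (\cite[Theorem 4.4.1]{EnPr}, cited just before Corollary \ref{Q2toQ1}): on the non-separably-closed field $L$ any two henselian valuations are dependent, so the henselian valuation coming from Johnson's topology (extended to $L$) and $w$ generate the same V-topology on $L$; intersecting with $K$ shows Johnson's topology on $K$ coincides with $\cT_G(L) \cap K$ and is therefore definable. Finally, the independence of the choice of $G$: for two primes $q, q'$ with $(L^\times)^q, (L^\times)^{q'}$ proper, both $\mathcal O_{G_q(L)}$ and $\mathcal O_{G_{q'}(L)}$ are henselian (non-trivial) on $L$, hence dependent by Schmidt, giving the ``equivalently, any'' clause. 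The same V-topology then yields, on passing to a suitable finite extension and applying Fact \ref{corVAxiomsnontrivialdefinable} (or the discussion following it, which produces a definable \emph{henselian} valuation on a further finite extension), the henselian definable valuation on $K$.

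The main obstacle is not any single deep step — all the heavy lifting (Johnson's henselianity theorem, the V-topology axiomatisation of Corollary \ref{groups}, Fact \ref{corVAxiomsnontrivialdefinable}) is imported — but rather the bookkeeping of the finite extensions and the descent back to $K$: one must be careful that restricting a definable valuation on $L$ to $K$ keeps it definable (it does, $\cO_w \cap K$ is defined by the $\cL(K)$-formula defining $\cO_w$ relativised to $K$ via a basis of $L$ over $K$), and that the identification of topologies is done at the level of $L$ where Schmidt's theorem is available and then pulled back to $K$. A secondary point of care is ensuring ``henselian'' (not merely definable) for the final valuation, which is where one appeals to the remark after Fact \ref{corVAxiomsnontrivialdefinable} together with \cite[Theorem 5.18]{Du2016} as used in the proof of Corollary \ref{groups}.
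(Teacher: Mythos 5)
Your route to the existence of a non-trivial definable valuation on $K$ and to the identification of the topologies is essentially the paper's: Johnson's \cite[Theorem 5.14]{JohnDPMin} gives henselianity of $K$, Corollary \ref{groups} together with Fact \ref{corVAxiomsnontrivialdefinable} gives a non-trivial definable valuation on a suitable finite extension $L$, and uniqueness of the henselian topology identifies Johnson's topology with Koenigsmann's and shows it is definable (the paper phrases this as uniqueness of the non-trivial t-henselian topology on $K$; you run Schmidt's theorem on $L$ and intersect with $K$, which comes to the same thing).

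There is, however, a genuine gap in how you obtain the word \emph{henselian}. The corollary asserts that $K$ itself admits a non-trivial \emph{definable henselian} valuation, whereas what your argument produces is (a) a definable valuation $\mathcal O_w\cap K$ on $K$ which only induces the henselian topology, and (b) a definable henselian valuation on some further finite extension of $K$. Neither suffices: a valuation dependent with (i.e.\ inducing the same topology as) a henselian valuation need not be henselian --- compose a henselian valuation with a non-henselian valuation on its residue field --- and henselianity does not in general descend from a finite extension to a subfield, so the restriction in (b) is not automatically henselian. The paper closes exactly this point by citing Johnson once more: by \cite[Theorem 5.14]{JohnDPMin} all definable valuations on a dp-minimal field are henselian, hence the restricted definable valuation on $K$ is henselian. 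Your substitutes --- the remark following Fact \ref{corVAxiomsnontrivialdefinable} and \cite[Theorem 5.18]{Du2016} --- only yield topological dependence, not henselianity. The same slip occurs in your ``equivalently, any $G$'' step: $\mathcal O_{G_q(L)}$ is not known to be henselian, so Schmidt's theorem does not apply to it directly; one should instead say that each $\mathcal O_{G_q(L)}$ is dependent with the (unique up to dependence) henselian valuation on $L$, which is what Corollary \ref{groups} actually provides and is enough for that clause.
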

\begin{proof}
	Let $K$ be a dp-minimal field that is neither real closed nor algebraically closed. By \cite[Theorem 5.14]{JohnDPMinArc} $K$ is henselian, and therefore so is any finite extension of $K$. Let $L$ be a finite extension of $K$ such that $G_q(L)\neq L^\times$ and $L$ contains a primitve $q$th root of unity. Then by Fact \ref{corVAxiomsnontrivialdefinable} and Corollary \ref{groups} we get that $L$ admits a non-trivial definable valuation. So $K$ admits a non-trivial definable valuation, and by \cite[Theorem 5.14]{JohnDPMinArc} all definable valuations on $K$ are henselian.
	
	Since $K$ is not separably closed it follows that $K$ supports a unique non-trivial t-henselian topology so the V-topology constructed by Johnson coincides with the topology associated with the definable henselian valuation, and is therefore definable.
\end{proof}

\begin{rem}\label{JohnsonDef}
	\begin{enumerate}
		\item 	The above corollary is implicit in Johnson's work. By inspecting his proof of Theorem 1.2 (\cite[\S 6]{JohnDPMinArc}) one sees that unless $K$ is real closed or algebraically closed the valuation ring $\mathcal O_\infty$  appearing in the proof, the intersection of all definable valuation rings on $K$, is non-trivial, implying that $K$ supports a non-trivial definable valuation.
		\item The same result can also be inferred from \cite[\S 7]{JaSiWa2015}. In that paper it is shown that a dp-minimal valued field which is neither real closed nor algebraically closed supports a non-trivial henselian valuation definable already in the pure field structure. By Johnson's Theorem 5.14 we know that $K$ admits a henselian valuation, which is externally definable. Since an expansion of a dp-minimal field by externally definable sets is again dp-minimal, the result follows.
	\end{enumerate}

\end{rem}

We note that the proof of the first part of the above corollary shows that the same results remain true for finite extensions of dp-minimal fields. This follows also from the following, somewhat surprising, corollary of Theorem \ref{classification}:
\begin{thm}\label{finite}
	Let $K$ be a dp-minimal field, $L$ a finite extension of $K$. Then $L$ is dp-minimal.
\end{thm}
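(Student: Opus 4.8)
The plan is to verify Johnson's criterion (Theorem~\ref{classification}) for $L$ directly, i.e.\ to produce on $L$ a valuation satisfying (1)--(5). Since dp-minimality depends only on the theory, I would first reduce to a convenient model: choose $K^*\succeq K$ sufficiently saturated (still dp-minimal). As $K$ is perfect, $L=K(\alpha)=K[x]/(q)$ for the minimal polynomial $q$ of a primitive element; irreducibility of the fixed polynomial $q$ is first-order, so $q$ remains irreducible over $K^*$ and $L^*:=K^*[x]/(q)$ is a field. Both $L$ and $L^*$ are interpreted in $K$ and $K^*$ by the same formulas (with parameters the coefficients of $q$), whence $L\preceq L^*$ and $\mathrm{Th}(L)=\mathrm{Th}(L^*)$; moreover $L^*$, being interpretable in the saturated $K^*$ over finitely many parameters, is itself sufficiently saturated. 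So it suffices to show that $L^*$ is perfect (clear, as a finite extension of the perfect field $K^*$) and carries a valuation with properties (1)--(5).

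If $K^*$ is real closed then $L^*$ is real or algebraically closed, and if $K^*$ is algebraically closed then $L^*=K^*$; in these cases $L^*$ is dp-minimal. Otherwise let $v$ on $K^*$ satisfy (1)--(5). If $v$ is trivial then by (3) $K^*$ is $p$-adically closed (the other options make $K^*$ real or algebraically closed), so $L^*$ is a finite extension of a $p$-adically closed field, hence $p$-adically closed by Prestel--Roquette, hence dp-minimal; so assume $v$ is nontrivial and henselian. Then $v$ extends uniquely to a henselian valuation $w$ on $L^*$, with finite ramification index $e=[\Gamma_w:\Gamma_v]$ and residue degree $f=[L^*w:K^*v]$, and $ef=[L^*:K^*]$ by (2). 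Now (1) is immediate; (2) holds since any finite extension of $(L^*,w)$ is a finite extension of $(K^*,v)$, hence defectless; (3) holds because $L^*w$ is a finite extension of $K^*v$, which is algebraically closed of characteristic $p$ (then $L^*w=K^*v$), or $\equiv\mathbb R$ (then $L^*w$ is real or algebraically closed), or $\equiv\mathbb C$ (then $L^*w=K^*v$), or $p$-adically closed (then $L^*w$ is $p$-adically closed by Prestel--Roquette); and (4) follows from $[\Gamma_w:n\Gamma_w]\le[\Gamma_w:n\Gamma_v]=e\cdot[\Gamma_v:n\Gamma_v]<\infty$ for every $n$.

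The real work is (5), the only condition that is not formally monotone under finite extensions, and I expect it to be the main obstacle. Here $\mathrm{char}(L^*w)=\mathrm{char}(K^*v)$ and $\mathrm{char}(L^*)=\mathrm{char}(K^*)$, so the hypothesis of (5) passes down to $(K^*,v)$, giving $[-\delta,\delta]\cap\Gamma_v\subseteq p\Gamma_v$ where $\delta:=v(p)=w(p)>0$; in this case $K^*v$ is algebraically closed of characteristic $p$, so $f=1$ and $\Gamma_v\subseteq\Gamma_w$ is totally ramified. I would isolate the following lemma on ordered abelian groups: if $\Delta$ is an ordered abelian group and $\delta\in\Delta_{>0}$ satisfies $[-\delta,\delta]\cap\Delta\subseteq p\Delta$, then the convex subgroup $C=\langle\delta\rangle$ generated by $\delta$ is $p$-divisible. (Sketch: let $D$ be the largest proper convex subgroup of $C$; then $C/D$ is Archimedean, generated by the image of $\delta$, and cannot be $\mathbb Z$ since $\delta\in p\Delta$; so $C/D$ is dense in $\mathbb R$, and density of $p(C/D)$ together with $[-\bar\delta,\bar\delta]\cap(C/D)\subseteq p(C/D)$ forces $C/D$ to be $p$-divisible; likewise $D\subseteq[-\delta,\delta]\cap\Delta\subseteq p\Delta$, and a convexity argument upgrades this to $D=pD$; an extension of a $p$-divisible group by a $p$-divisible one is $p$-divisible.) Applying the lemma to $\Gamma_v$ makes $C_v:=\langle\delta\rangle_{\Gamma_v}$ $p$-divisible; since $C_w:=\langle\delta\rangle_{\Gamma_w}$ satisfies $C_w\cap\Gamma_v=C_v$ with $[C_w:C_v]\le e<\infty$, and torsion-freeness of $\Gamma_w$ rules out $p$-torsion in $C_w/C_v$, the group $C_w$ is $p$-divisible as well; hence $[-\delta,\delta]\cap\Gamma_w\subseteq C_w=pC_w\subseteq p\Gamma_w$, which is (5) for $w$. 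This completes the verification, so $L^*$, and therefore $L$, is dp-minimal. The crux is exactly the lemma: condition (5) already forces enough $p$-divisibility in $\Gamma_v$ near $v(p)$ to survive the (possibly wild) ramification of the extension.
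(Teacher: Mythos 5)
Your proposal is correct, and its skeleton is the same as the paper's: reduce to sufficiently saturated models (using that $L$ is interpretable in $K$), take the unique extension $w$ of the valuation $v$ provided by Theorem \ref{classification}, and verify conditions (1)--(5), with (1), (2), (4) handled just as in the paper. The differences lie in the two non-routine conditions. For (3), the paper gives a self-contained argument via Krasner's lemma (the finitely many degree-$n$ extensions of a $p$-adic field are generated by roots of finitely many fixed polynomials over $\mathbb Q$, an elementary property), while you cite Prestel--Roquette (finite extensions of $p$-adically closed fields are $p$-adically closed, and such fields are precisely the models of theories of finite extensions of $\mathbb Q_p$); both routes are legitimate --- the paper only spells out the details for lack of a reference. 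The genuine divergence is condition (5): the paper invokes Johnson's Lemma 6.8 of \cite{JohnDPMin} to reduce (5) to the infinitude of the interval $[-v(p),v(p)]$, which it then gets from the chain $0<g_n<\dots<g_0=v(p)$ produced by $p$-divisibility of $v(p)$ in $\Gamma_v$; you instead prove (5) outright by a purely ordered-abelian-group argument: (5) for $\Gamma_v$ forces the convex subgroup generated by $v(p)$ to be $p$-divisible, and $p$-divisibility passes to the finite-index extension inside $\Gamma_w$ because $C_v$ being $p$-divisible together with torsion-freeness kills $p$-torsion in $C_w/C_v$. Your route buys independence from Johnson's Lemma 6.8 at the cost of an extra lemma; the paper's is shorter but leans on a black box from \cite{JohnDPMin}. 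Two micro-steps in your sketch of the lemma need patching: ``$C/D$ cannot be $\mathbb Z$'' does not follow from $\delta\in p\Delta$ alone --- you must iterate (every element of $[0,\delta]$ is $p$ times an element of $[0,\delta]$, so $\bar\delta$ is divisible by all powers of $p$); and lifting $[-\bar\delta,\bar\delta]\cap(C/D)$ to $C$ only gives representatives bounded by $2\delta$, not $\delta$. In fact the Archimedean-quotient detour is unnecessary: given $0<x\le k\delta$ with $k$ minimal, write $x=\bigl(x-(k-1)\delta\bigr)+(k-1)\delta$; the first summand lies in $(0,\delta]$ and hence equals $pz$ with $z\in[0,\delta]\subseteq C$, and $\delta=p\gamma$ with $\gamma\in[0,\delta]\subseteq C$, so $x\in pC$ directly, giving $C=pC$ in two lines.
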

\begin{proof}
	Since dp-minimality is an elementary property, we may assume that $K$ is saturated. Indeed, since $L$ is a finite extension of $K$ it is interpretable in $K$, and if $K'\succ K$ is saturated, the field $L'$ interpreted in $K'$ by the same interpretation is a saturated elementary extension of $L$. Thus, it will suffice to show that there exists a valuation $v$ on $L$ satisfying conditions (1)-(5) of Theorem \ref{classification}. Since $K$ is saturated, there is such a valuation on $K$, extending uniquely to $L$. By abuse of notation we will let $v$ denote also this extension.
	
	Conditions (1) and (2) of the theorem are automatic and condition (4) is an immediate consequence of the fundamental inequality (e.g., Theorem 3.3.4\cite{EnPr}). Condition (3) is automatic if $Kv$ is real closed or algebraically closed. So it remains to check that if $Kv$ is elementarily equivalent to a finite extension of $\mathbb Q_p$ then so is $Lv$. This is probably known, but as we could not find a reference, we give the details.
	
	By Krasner's Lemma any finite extension of $\mathbb Q_p$ is of the form $\mathbb Q_p(\delta)$ for some $\delta$ algebraic over $\mathbb Q$ and  $\mathbb Q_p$ has only finitely many extensions of degree $n$ (for any $n$). Denoting $e(n)$ the number of extensions of $\mathbb Q_p$ of degree $n$, there are $P_1(x),\dots, P_{e(n)}(x)\in \mathbb Q$ irreducible such that any finite extension of $\mathbb{Q}_p$ of degree $n$ is generated by a root of one of $P_1(x), \dots, P_{e(n)}(x)$. As this is clearly an elementary property, we get that the same remains true if $F\equiv \mathbb Q_p$. Of course, all of the above remains true if we replace $\mathbb Q_p$ by some finite extension $L\ge \mathbb Q_p$.
	So if $L'\equiv L$ and $F'\ge L'$ is an extension of degree $n$ it must be that $F'=L(\delta)$ for some $\delta$ realising on of $P_1(x),\dots, P_{e(n)}(x)$, implying that $F'$ is elementarily equivalent to $F$, the  algebraic extension of $L$ obtained by realising the same polynomial.
	
	It remains to show that if $(K,v)$ is of mixed characteristic then $[-v(p),v(p)]\subseteq p\Gamma$ where $p=\mathrm{char} Kv$. By \cite[Lemma 6.8]{JohnDPMinArc} and the sentence following it, to verify this condition it suffices to show that $[-v(p),v(p)]$ is infinite. Towards that end it will suffice to show that $[-v(p),v(p)]\cap v(K)$ is infinite. Indeed, by assumption $(K,v)$ satisfies (5) of Theorem \ref{classification}, so $[-v(p),v(p)]\subseteq p\Gamma$. As we are in the mixed characteristic case $v(p)>0$. Since $v(p)\subseteq p\Gamma$, there is some $g_1\in \Gamma$ such that $pg_1=v(p):=g_0$. So $0<g_1<g_0$, and by induction, for all $n$ we can find $0<g_n<g_{n-1}<g_0=v(p)$. This show that $[-v(p), v(p)]\cap v(K)$ is infinite, concluding the proof of the theorem.
\end{proof}

As already mentioned in the beginning of this section, the V-topologies constructed by Johnson and Koenigsmann coincide in the dp-minimal case. However, in order to start Koenigsmann's construction we first need to assure that $G_q(K)\neq K^\times$, and for that we may have to pass to a finite extension. Let us now point out that in the dp-minimal case this is not needed:
\begin{lem}
	Let $K$ be a dp-minimal field that is neither real closed nor algebraically closed. Then $G_q(K)\neq K^\times$ for some $q$.
\end{lem}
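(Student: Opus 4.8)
The plan is to show that a dp-minimal field $K$ which is neither real closed nor algebraically closed cannot be $q$-divisible for every prime $q$, and in fact $G_q(K)\neq K^\times$ already holds for the prime $q$ coming from the residue characteristic of the canonical valuation. First I would invoke Theorem \ref{classification}: since dp-minimality is elementary I may pass to a saturated model, obtaining a valuation $v$ on $K$ that is henselian, defectless, with residue field $Kv$ either algebraically closed of characteristic $p$ or elementarily equivalent to a local field of characteristic $0$, with almost divisible value group $\Gamma_v$, and satisfying the mixed-characteristic condition (5). The point is that $K$ being neither real closed nor algebraically closed forces $(K,v)$ to be genuinely nontrivial data: either $\Gamma_v\neq 0$, or $\Gamma_v=0$ and $Kv$ itself is not algebraically/real closed (so $Kv\equiv$ a local field, e.g.\ a finite extension of $\Q_p$), and in either case some $q$-divisibility fails.

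The key steps, in order, would be: (1) reduce to the saturated case and fix $v$ as above. (2) If $v$ is trivial, then $K=Kv$ is elementarily equivalent to a local field; local fields (and anything elementarily equivalent to them) have $[F^\times:(F^\times)^q]>1$ for suitable $q$ — e.g.\ for a finite extension of $\Q_p$ one may take $q$ a prime dividing the order of the (finite) group of roots of unity, or simply $q=\ell$ for any prime $\ell$ since $F^\times\cong \Z\times(\text{finite})\times\Z_\ell^{[F:\Q_\ell]}$ as topological groups, and this is expressible first-order enough to transfer. (3) If $v$ is nontrivial, split on whether $\Gamma_v$ is $q$-divisible for all $q$. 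Since $\Gamma_v$ is almost divisible but an ordered abelian group that is divisible and nonzero... here I must be careful: $\Gamma_v$ could be divisible (e.g.\ $\Q$). So the real case to handle is $\Gamma_v$ divisible and $Kv$ algebraically closed of characteristic $p$ (the equicharacteristic-$0$ divisible case would make $K$ henselian with alg.\ closed residue field and divisible value group, hence $K$ itself algebraically closed by Ax--Kochen/Ershov-type arguments, contradicting hypothesis; likewise the real closed analogue). In the remaining case, $\mathrm{char}(K)=0$, $\mathrm{char}(Kv)=p>0$, and I claim $G_p(K)\neq K^\times$: indeed by condition (5), $[-v(p),v(p)]\subseteq p\Gamma_v$, but $p\notin 1+\cM_v$ and more to the point $1/p$ cannot have a $p$-th root in $K$, since a $p$-th root $x$ of $p$ would have $v(x)=v(p)/p>0$ while the henselian defectless mixed-characteristic structure obstructs this — cleanly: if $K=K^p$ then $Kv=Kv^p$ (residue of a $p$-th root is a $p$-th root) which holds, but also $\Gamma_v=p\Gamma_v$ which holds, yet by Hensel/defectlessness $K^p=K$ together with these would force the absence of ramification at $p$, contradiction with $v(p)>0$ when $Kv$ has characteristic $p$. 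So equicharacteristic alg.\ closed residue forces, together with "not algebraically closed", that $\Gamma_v$ is non-divisible, giving $q$ with $q\Gamma_v\neq\Gamma_v$ hence $G_q(K)\neq K^\times$.

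The main obstacle I expect is the mixed-characteristic equicharacteristic-residue case, i.e.\ ruling out $K=K^p$ when $\mathrm{char}(K)=0$, $Kv$ algebraically closed of characteristic $p$, $\Gamma_v$ divisible: one has to use condition (5) and defectlessness to derive a contradiction from $p\in(K^\times)^p$, since a naive degree/ramification count gives defect $1$ but one must check that a $p$-th root of $p$ really is forbidden — the clean way is: $p$-divisibility of $K^\times$ plus Hensel plus defectlessness implies that the extension generated by a $p$-th root of $p$ has trivial residue extension and value-group extension, hence is trivial, so $p=y^p$ for some $y\in K$ with $v(y)=v(p)/p$; but then $v(y)<v(p)$ with $v(y)>0$, and iterating, $[0,v(p)]$ contains an infinite descending sequence, which is fine in a divisible group — so instead one should note $v(p)\in p\Gamma_v$ is automatic and the real contradiction is that $K$ henselian of residue char $p$ with $p$ a $p$-th power forces, via the standard fact that then $K$ contains $\mu_p$ and all the right radicals, that $K$ has no proper finite extensions detectable by $v$, hence (being Artin--Schreier closed as an NIP field, and $p$-divisible, and henselian-defectless with algebraically closed residue and divisible value group) $K$ is algebraically closed — contradiction. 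I would phrase the final write-up by invoking the classification to reduce everything to "if $K^\times$ is $q$-divisible for all $q$ and $K$ is henselian with the listed properties, then $K$ is real or algebraically closed," which is essentially an Ax--Kochen/Ershov observation, and cite \cite{EnPr} for the needed henselian-defectless facts.
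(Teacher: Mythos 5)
Your overall strategy is the paper's: invoke Theorem \ref{classification} and show that $q$-divisibility of $K^\times$ for every prime $q$ is incompatible with the properties of the valuation $v$ unless $K$ is real closed or algebraically closed. But your case analysis has a genuine hole. When $v$ is nontrivial and $\Gamma_v$ is divisible you only treat residue fields that are models of $ACF_0$, $RCF$ or $ACF_p$; the classification also allows $Kv$ elementarily equivalent to a finite extension of $\mathbb{Q}_p$, and that case is not vacuous: for instance $K=\mathbb{Q}_p((t^{\mathbb{Q}}))$ is dp-minimal, neither real closed nor algebraically closed, and the valuation supplied by the classification can be the power-series valuation, whose value group $\mathbb{Q}$ is divisible. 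For such $K$ no argument through the value group can succeed; what is needed (and what the paper does) is that $(Kv)^\times$ fails to be $q$-divisible --- for a finite extension of $\mathbb{Q}_p$ this is a parameter-free first-order statement, so it passes to anything elementarily equivalent --- and that this lifts to $K$: if a unit is a $q$-th power in $K$ it is the $q$-th power of a unit, so its residue is a $q$-th power in $Kv$. Your step (2) contains exactly the right observation about local fields, but you only apply it when $v$ is trivial, so as written the lemma is not proved for fields like $\mathbb{Q}_p((t^{\mathbb{Q}}))$. (Your opening claim that the relevant $q$ is always the residue characteristic is also not right in general, e.g.\ when the residue characteristic is $0$.)

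The mixed-characteristic digression is likewise off track, though salvageable. Your first attempted contradiction (no $p$-th root of $p$) fails for the reason you yourself notice: with divisible value group, algebraically closed residue field and defectlessness, adjoining a $p$-th root of $p$ creates no residue or value-group extension, so nothing is obstructed. In fact no special argument is needed: once $Kv$ is algebraically closed and $\Gamma_v$ is divisible, defectlessness and the fundamental inequality give, for every finite extension $L/K$, that $[L:K]=d\cdot e\cdot f=1$, so $K$ is algebraically closed --- in mixed characteristic exactly as in equal characteristic; this is the paper's remark that $(K,v)$ is algebraically maximal. The same computation also covers the equicharacteristic-$p$ subcase, which you silently discard by asserting $\mathrm{char}(K)=0$. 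The correct conclusion does appear, buried, in your final parenthetical, but the surrounding appeals to $\mu_p$, ``all the right radicals'' and Artin--Schreier closedness are neither needed nor justified as stated, and the contradiction should be drawn from the valuation data alone, not from $p$-divisibility of $K^\times$.
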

\begin{proof}
	Let $v$ be as provided by Theorem \ref{classification}. It will suffice to show that the value group is not divisible. This is clear if the residue field is elementarily equivalent to a finite extension of $\mathbb Q_p$. Indeed, any finite extension $L$ of $\mathbb Q_p$ is henselian with value group isomorphic to $\mathbb Z$, which is not $n$ divisible for any $n>1$. So $G_n(L)\neq L^\times$ for any such $n$. As this is expressible by a first order sentence with no parameters, it remains true in any $L'\equiv L$.
	
	If $Kv\models ACF_0$ or $Kv\models RCF$, the value group cannot be divisible, as then $K$ would be  algebraically closed (resp. real closed). If $Kv\models ACF_p$ then, as $v$ is henselian defectless $(K,v)$ is algebraically maximal, in which case divisibility of the value group would again imply that $K\models ACF$.
\end{proof}

\section{Hahn Series and related constructions}\label{Hahn}
Little is known on the construction of simple fields. The situation is different in the NIP setting where strong transfer principles for henselian valued fields (see, e.g., \cite{JaSiTransfer} and references therein for the strongest such result to date) allow the construction of many examples of NIP fields. In the present section we sharpen some of these results and exploit them to construct various examples.

For the sake of clarity we remind the definition of strong dependence (in the formulation most convenient for our needs. See \cite[\S2]{Sh863} for more details):
\begin{de}
%	\begin{enumerate}
		A theory $T$ is strongly dependent if whenever $I$ is an infinite linear order, $\{a_t\}_{t\in I}$ an indiscernible sequence (of $\alpha$-tuples, some $\alpha$), and $a$ is a singleton there is an equivalence relation $E$ on $I$ with finitely many convex classes such that for $s\in I$ the sequence $\{a_t: t\in s/E\}$ is $a$-indiscernible.
		%\item We will say that a type $p$ is strongly dependent if any indiscernible sequence $\{a_i\}_{i\in \omega}$ in $p$ satisfies the above conclusion.
%	\end{enumerate}
\end{de}

We show:

\begin{thm}\label{examples}
	There are NIP fields with the following properties:
	\begin{enumerate}
		\item A strongly NIP field that is not dp-minimal.
		\item A strongly NIP field $K$ such that $[K^\times: (K^\times)^q ]=\infty$ for some prime $q$.
		\item A perfect NIP field that is not strongly NIP.
%		\item An NIP field with an imperfect residue field.
		\item An unbounded strongly NIP field.
	\end{enumerate}
\end{thm}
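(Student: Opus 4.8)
The plan is to construct each of the four fields as a Hahn field (or a suitable henselian valued field) over a carefully chosen residue field and value group, and then invoke known transfer principles for henselian valued fields to compute the relevant model-theoretic invariants. Throughout I would work with fields of the form $k((t^\Gamma))$, using that such fields are henselian, and use the Ax--Kochen--Ershov style transfer results of Delon, B\'elair and Jahnke--Simon quoted in the introduction: the NIP-ness (resp. strong dependence) of $k((t^\Gamma))$ is controlled, in equicharacteristic $0$, by the NIP-ness (resp. strong dependence) of the residue field $k$ together with that of the value group $\Gamma$ as an ordered abelian group, and dp-rank is (sub)additive across the short exact sequence relating value group, residue field and valued field. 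For the characteristic $p$ parts I would use that NIP valued fields of residue characteristic $p$ have $p$-divisible value group, so any counterexample to $p$-divisibility must be built in mixed characteristic or via a $q\neq p$.

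\textbf{Item by item.} For (1), a strongly NIP field that is not dp-minimal: take $k$ a strongly dependent but not dp-minimal field --- e.g. $\mathbb{Q}_p$ itself is dp-minimal, so instead iterate, taking $k = \mathbb{Q}_p((t^{\mathbb{Z}}))$ or, more cleanly, $\mathbb{R}((t^{\mathbb{Z}^2}))$ with $\mathbb{Z}^2$ ordered lexicographically; the ordered abelian group $\mathbb{Z}^2$ is strongly dependent of dp-rank $2$, so the resulting field is strongly dependent of dp-rank $\geq 2$, hence not dp-minimal, while finiteness of dp-rank gives strong NIP. For (2), a strongly NIP $K$ with $[K^\times:(K^\times)^q]=\infty$: the point is that in characteristic $\neq q$ this quotient is essentially $\Gamma/q\Gamma \times (Kv)^\times/((Kv)^\times)^q$, so I would take a strongly dependent ordered abelian group $\Gamma$ with $\Gamma/q\Gamma$ infinite --- for instance $\Gamma = \bigoplus_{n<\omega}\mathbb{Z}$ with an appropriate (e.g. lexicographic over a well order, or the Hahn-product-type) ordering that keeps it strongly dependent --- and set $K = \mathbb{C}((t^\Gamma))$; one must check that this particular $\Gamma$ is indeed strongly dependent as an ordered abelian group (using the classification/transfer results for ogroups), which is the delicate point here. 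For (3), a perfect NIP field that is not strongly NIP: take a NIP ordered abelian group $\Gamma$ that is not strongly dependent (such exist, e.g. $\bigoplus_{p \text{ prime}} \mathbb{Z}/p$ is not strongly dependent but is NIP as an ordered abelian group --- more precisely one uses the Gurevich--Schmitt NIP of all ogroups and a known failure of strong dependence), and form $\mathbb{Q}((t^\Gamma))$ or $\mathbb{C}((t^\Gamma))$; this is NIP by the transfer theorem, perfect (equicharacteristic $0$), and not strongly dependent because strong dependence transfers down to the value group. For (4), an unbounded strongly NIP field: ``unbounded'' means having infinitely many finite (separable) extensions of some degree, equivalently the absolute Galois group is not small; take $K = \mathbb{F}_p((t^\Gamma))$ --- or to stay perfect as well, $\bar{\mathbb{F}}_p((t^\Gamma))$ won't work since that's bounded, so instead use a residue field that is perfect, strongly dependent, and unbounded, or exploit the value group: with $\Gamma = \mathbb{Z}$ and residue field a pseudo-finite field one gets unboundedly many extensions, and pseudo-finite fields are NIP (indeed strongly dependent / dp-minimal?), so $k((t))$ with $k$ pseudo-finite is strongly dependent and unbounded.

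\textbf{Main obstacle.} The genuinely non-routine part is the choice and verification of the ordered abelian groups: for (2) I need a strongly dependent ogroup with infinite $q$-index, and for (3) I need a NIP ogroup that fails strong dependence, and in both cases establishing the ogroup-level model-theoretic property (via the Gurevich--Schmitt quantifier elimination and the work on strong dependence of ordered abelian groups, e.g. as in Jahnke--Simon--Walsberg or Farr\'e) is where the real content sits; once these ogroups are in hand, the valued-field conclusions follow mechanically from the Ax--Kochen--Ershov transfer principles and the subadditivity of dp-rank cited above, together with the elementary computation of $K^\times/(K^\times)^q$ in terms of residue field and value group. A secondary subtlety is making sure the examples for (3) and for ``perfect'' in (2) are in equicharacteristic $0$ (so perfection is free and the cleanest transfer theorems apply), while the unbounded example in (4) is allowed to be of positive or mixed characteristic.
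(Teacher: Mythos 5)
Your overall strategy (Hahn fields over chosen residue fields and value groups, plus the Delon/Chernikov/B\'elair transfer principles) is exactly the paper's, but the concrete choices break down, and in items (1) and (4) at points you do not flag. For (1): by the criterion of \cite{JaSiWa2015} quoted in the paper, an ordered abelian group is dp-minimal if and only if $[\Gamma:p\Gamma]<\infty$ for all primes $p$; hence $\mathbb{Z}^2$ with the lexicographic order is dp-minimal (not of dp-rank $2$), and $\mathbb{R}((t^{\mathbb{Z}^2}))$ -- maximally complete, residue field $\mathbb{R}$, almost divisible value group -- satisfies Johnson's classification (Theorem \ref{classification}) and is therefore dp-minimal; the same applies to $\mathbb{Q}_p((t))$ via the $t$-adic valuation. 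So both of your candidates for (1) fail. The real content here is producing a strongly dependent, non-dp-minimal ordered abelian group, which by \cite{KaSh} necessarily has $[\Gamma:q\Gamma]=\infty$ for some (but only finitely many) primes $q$; the paper uses $G={}_{(2)}\mathbb{Z}\otimes\langle B\rangle\subseteq\mathbb{R}$ (Fact \ref{poschar}), and this single group then also yields (2) and, via Kummer extensions, (4).

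For (2), your candidate $\bigoplus_{n<\omega}\mathbb{Z}$ has $[\Gamma:q\Gamma]=\infty$ for \emph{every} prime $q$, which by the Kaplan--Shelah chain condition rules out strong dependence under any ordering; the ``delicate point'' you defer is not delicate but fatal, and one must instead invert all primes except one, as in Fact \ref{poschar}. For (3) your plan coincides with the paper's (a NIP, non-strongly-dependent value group in equicharacteristic $0$), but $\bigoplus_{p}\mathbb{Z}/p$ is a torsion group and cannot be ordered; the paper takes $\Gamma=\mathbb{Z}^{\mathbb{N}}$ with the lexicographic order, NIP by \cite{GurSch} and not strongly dependent since $[\Gamma:n\Gamma]=\infty$ for all $n>1$. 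For (4), a pseudo-finite residue field destroys NIP outright (pseudo-finite fields have IP), so $k((t))$ with $k$ pseudo-finite is not NIP, let alone strongly NIP; the paper instead extracts unboundedness from the value group: if $[G:qG]=\infty$, then choosing elements of $K$ whose values lie in infinitely many distinct classes modulo $qG$ and adjoining their $q$-th roots produces, by the fundamental inequality, infinitely many distinct Kummer extensions of degree $q$, while strong dependence of $G$ (and of the residue field) keeps $k((t^G))$ strongly NIP.
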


Recall that a field is \emph{bounded}\footnote{In the literature e.g., \cite{PilPoi}, \cite{PiScWa} a slightly stronger condition is used. The restriction to separable extensions seems, however, more natural and even implicitly implied in some applications.} if for all $n\in \mathbb N$ it has finitely many separable extensions of degree $n$. Super-simple fields are bounded,\cite{PilPoi}, and conjecturally, so are all simple fields. As pointed out to us by F. Wagner, it follows, e.g., from \cite[Theorem 5.10]{PoiGroups} that bounded stable fields are separably closed.

For the sake of completeness we give a different proof, essentially, due to Krupinski, with a less stability-theoretic flavour: Let $K$ be a bounded stable field. Since stability implies NIP $K$ and all its finite extensions are, as already mentioned, Artin-Schreier closed. By an easy strengthening of \cite[Lemma 2.4]{KrVal}, it will suffice to show that $K^q=K$ for all prime $q\neq \mathrm{char}(K)$. Boundedness\footnote{In \cite{KrVal} Krupinski introduces the slightly weaker \emph{radical boundedness}, which suffices for the argument.} implies that were this not the case for some $q$ we would have $1<[K^\times, (K^\times)^q]<\infty$. By \cite[Proposition 4.8]{KrSRNIP} this implies that $K$ is unstable (in fact, that the formula $\exists z(x-y=z^q)$ has the order-property).

As we will see in the concluding section of the present paper, boundedness may also have a role to play in the study of the two questions stated in the Introduction. In view of the results of Theorem \ref{examples} it seems natural to look for model theoretic division lines that will separate the bounded NIP fields\footnote{Added in proof: In \cite{HaHaJa} it is shown that Shelah's conjecture implies that a strongly dependent field is bounded if and only if it is dp-minimal.}.

\begin{rem}
%	\begin{enumerate}
		In \cite[Corollary 3.13]{KaSh} it is shown that in a strongly dependent field $K$ for all but finitely many primes $p$ we have $[K^\times: (K^\times)^q]<\infty$. Clause (2) of Theorem \ref{examples} shows that this result is optimal.
\end{rem}

We will use Hahn series to construct the desired examples. The basic facts that we need are:
\begin{fact}\label{delon}
	A henselian valued field $(K,v)$ of equi-characteristic $0$ is (strongly) NIP if and only if the value group and the residue field are (strongly) NIP.  If $(K,v)$ is dp-minimal then so are the residue field and the value group.
\end{fact}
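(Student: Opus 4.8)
The plan is to prove Fact \ref{delon} by reduction to the Ax--Kochen--Ershov style transfer principles already available in the literature, rather than by any direct combinatorial analysis of indiscernible sequences. The key input is the Delon--B\'elair transfer theorem for henselian valued fields of equicharacteristic $0$: such a field $(K,v)$, viewed as a three-sorted structure (or in the Denef--Pas language), is NIP if and only if the residue field $Kv$ and the value group $\Gamma_v$ are both NIP. This is exactly \cite{DelHenselian} (and its correction/extension by \cite{BelHenselian}), so for the NIP part of the statement I would simply cite this. The point that needs a word of care is that the statement here is about the \emph{pure field} $K$, not about $(K,v)$ as a valued field; but since the NIP-ness of $(K,v)$ implies that of its reduct $K$, one direction is immediate, and for the converse one uses that in equicharacteristic $0$ any henselian valuation on $K$ is already a reduct-invariant piece of data in the relevant cases, or more simply one just observes that $K$ NIP together with henselianity forces $(K,v)$ NIP via the same AKE principle applied in the other direction. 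For the stated application only the direction ``residue field and value group (strongly) NIP $\Rightarrow$ $K$ (strongly) NIP'' is actually needed, so I would emphasize that one.

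For the strong NIP (strong dependence) refinement, the plan is to invoke the analogous transfer principle for strong dependence in henselian valued fields of equicharacteristic $0$, due to Jahnke--Simon \cite{JaSiTransfer} (building on Chernikov's work on dp-rank in henselian valued fields): $(K,v)$ of equicharacteristic $0$ henselian is strongly dependent iff $Kv$ and $\Gamma_v$ are both strongly dependent. Again this gives the statement about $(K,v)$, and descends to the pure field $K$ since strong dependence passes to reducts. Ordered abelian groups that are strongly dependent are well understood (they are exactly those with finite spines, by the Halevi--Hasson--Jahnke / Farr\'e classification), but I would not need that level of detail here — I only need the black-box transfer statement.

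For the dp-minimality clause, the plan is to use the dp-rank version of the AKE principle: for henselian $(K,v)$ of equicharacteristic $0$ one has $\dprk(K,v) = \dprk(Kv) + \dprk(\Gamma_v)$ (in the appropriate additive sense; see \cite{JaSiTransfer} and the references therein). Hence if the valued field $(K,v)$ is dp-minimal, each of the summands is at most $1$, so $Kv$ and $\Gamma_v$ are each dp-minimal (a nontrivial value group or residue field already contributes dp-rank $\geq 1$, so in fact both have dp-rank exactly $1$ unless trivial). Strictly speaking the hypothesis is that the pure field $K$ is dp-minimal, so I would note that $\dprk(K)\le\dprk(K,v)$ is the wrong direction; instead one argues that a dp-minimal field $K$ carrying a henselian equicharacteristic-$0$ valuation has $(K,v)$ dp-minimal as well, because the valuation (or at least the topology it induces) is definable in the relevant reduct by the earlier Corollary, or else one uses that an NIP expansion-by-a-henselian-valuation of a dp-minimal equicharacteristic-$0$ field does not increase dp-rank — this is the one place where I would need to be slightly careful about which direction of the inequality is being used.

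The main obstacle, then, is not any of the transfer principles themselves (which are cited wholesale) but the bookkeeping about \emph{which structure} carries the model-theoretic hypothesis: the lemma as stated speaks of the pure field $(K,\cdot,+)$, whereas the transfer theorems are naturally about the valued field $(K,v)$. The clean way around this is to note that in equicharacteristic $0$ one always has $\dprk(K,v)\le\dprk(K) + \dprk(\text{the value group and residue field contributions already inside }K)$ — more transparently, that the valuation topology on a Hahn field $k((t^\Gamma))$ is $\emptyset$-definable in the pure field for the examples we care about (or that $v$ is henselian so $(K,v)$ is a minimal henselian expansion, which by work of Jahnke--Simon--Walsberg does not change NIP/strong-dependence status), and therefore the model-theoretic properties of $K$ and of $(K,v)$ coincide. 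I would phrase the proof so that for the ``only if'' directions (properties of $Kv,\Gamma_v$ from properties of $(K,v)$) one uses that $K$ is a reduct of $(K,v)$, and for the ``if'' directions one uses Delon--B\'elair / Jahnke--Simon directly on $(K,v)$ and then passes to the reduct $K$.
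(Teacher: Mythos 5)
Your core plan -- quote the equicharacteristic-$0$ AKE-style transfer wholesale (Delon \cite{DelHenselian}, with B\'elair, for NIP; Chernikov \cite{Chernikov}, or equivalently the Jahnke--Simon line \cite{JaSiTransfer}, for strong dependence), and get the converse directions from the fact that the residue field and value group are interpretable in the valued field -- is exactly what the paper does: Fact \ref{delon} is stated as a black box with precisely these attributions, and no further argument is given or needed. The dp-minimality clause is likewise immediate from interpretability (cf.\ \cite{JaSiWa2015}), since an interpreted structure cannot have larger dp-rank than the ambient one.

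Where you go astray is in the bookkeeping you flag as the ``main obstacle'': you have misread the statement. Fact \ref{delon} is about the \emph{valued field} $(K,v)$ as a structure, not about the pure field $K$; so there is nothing to bridge, and all the extra steps you insert to pass between $K$ and $(K,v)$ are superfluous. Worse, the patches you propose for this non-issue are not correct as stated: it is \emph{not} true that ``$K$ NIP together with henselianity forces $(K,v)$ NIP via the same AKE principle applied in the other direction'' -- the AKE transfer deduces NIP of $(K,v)$ from NIP of $Kv$ and $\Gamma_v$, not from NIP of the reduct $K$, and whether NIP passes from a field to a henselian expansion is a genuinely delicate question (it is the subject of \cite{JanNIP}), not a formal consequence. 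The same applies to your suggested argument that a dp-minimal pure field with a henselian equicharacteristic-$0$ valuation has $(K,v)$ dp-minimal: unnecessary here, and not something you may assert without proof. The pure-field versus valued-field issue does surface in the paper, but only in the subsequent lemma on Hahn fields, where it is handled simply by noting that Hahn fields are maximally complete (hence henselian) and that ordered abelian groups are NIP by \cite{GurSch}; if you delete your bridging paragraphs and state the Fact for $(K,v)$, what remains coincides with the paper's treatment.
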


The NIP case of the above fact is due to Delon \cite{DelHenselian} and the strongly NIP case is due to Chernikov \cite{Chernikov}. We get:
\begin{lem}
	Let $k$ be a field of characteristic $0$, $\Gamma$ an ordered abelian group. Then the Hahn series $k((t^\Gamma))$ is NIP as a valued field if and only if $k$ is NIP as a pure field. It is strongly NIP if and only if $k$ and $\Gamma$ are.
\end{lem}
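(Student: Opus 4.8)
The plan is to reduce the statement to Fact \ref{delon} via two standard facts about Hahn series: first, that $k((t^\Gamma))$ carries a henselian valuation $v_t$ (the $t$-adic valuation) whose residue field is $k$ and whose value group is $\Gamma$; second, that since $\mathrm{char}(k)=0$, this valued field is of equi-characteristic $0$. Both are classical (see, e.g., \cite{EnPr}): the valuation ring is $\{f : \mathrm{supp}(f)\subseteq \Gamma_{\geq 0}\}$, henselianity follows from maximality (Hahn fields are maximally complete, hence henselian), the residue map sends $f$ to its coefficient at $0$, and the value group is all of $\Gamma$. So $k((t^\Gamma))$ with $v_t$ satisfies the hypotheses of Fact \ref{delon}.

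Next I would apply Fact \ref{delon} directly. For the NIP equivalence: if $k$ is NIP as a pure field, then in particular $k$ is NIP as a valued field with the trivial valuation (or simply: the residue field, viewed in the pure ring language, is NIP), and an ordered abelian group is always NIP by Gurevich--Schmitt, so by the NIP case of Fact \ref{delon} the valued field $k((t^\Gamma))$ is NIP. Conversely, if $k((t^\Gamma))$ is NIP as a valued field, then $k$, being interpretable in it as the residue field, is NIP as a pure field. For the strongly NIP equivalence: if $k$ and $\Gamma$ are strongly NIP (as a field and an ordered abelian group respectively), the strongly NIP case of Fact \ref{delon} gives that $k((t^\Gamma))$ is strongly NIP; conversely, if $k((t^\Gamma))$ is strongly NIP, then both the interpretable residue field $k$ and the interpretable value group $\Gamma$ are strongly NIP.

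I would then record the one asymmetry worth a sentence: in the strongly NIP direction we genuinely need both $k$ and $\Gamma$, whereas in the plain NIP direction the condition on $\Gamma$ is automatic, which is why the statement is phrased the way it is. One should also note that Fact \ref{delon} as quoted speaks of the value group and residue field being (strongly) NIP \emph{in their own right}; for the value group this means as an ordered abelian group, and for the residue field as a pure field --- matching exactly the data $\Gamma$ and $k$ here.

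The only genuine subtlety --- not really an obstacle --- is making sure the reduction to Fact \ref{delon} is legitimate, i.e., that $k((t^\Gamma))$ really is henselian of equi-characteristic $0$ and that ``$k$ NIP as a valued field'' in the statement of Fact \ref{delon} is compatible with ``$k$ NIP as a pure field'' in the conclusion of the lemma (they agree, since the relevant valuation on the residue field is trivial). Beyond that, everything is a direct citation: Fact \ref{delon} for both directions, Gurevich--Schmitt for NIP of ordered abelian groups, and interpretability of the residue field and value group for the converse implications. No calculation is needed.
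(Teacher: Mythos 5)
Your proof is correct and follows essentially the same route as the paper: Hahn fields are maximally complete, hence henselian of equi-characteristic $0$, and the statement then reduces to Fact \ref{delon}, with Gurevich--Schmitt disposing of the value group in the plain NIP case. The paper's own proof is just a terser version of exactly this reduction.
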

\begin{proof}
	Hahn series are maximally complete, and therefore henselian. So the result follows from the previous fact.
\end{proof}

In order to prove clauses (1) and (3) of Theorem \ref{examples} it will suffice, therefore, to find strongly NIP ordered abelian groups that are not dp-minimal and ones that are not strongly NIP. We start with the latter:

\begin{ex}
	Consider $\Gamma:=\mathbb Z^\mathbb N$ as an abelian group (with respect to pointwise addition) with the lexicographic order. Then $\Gamma$ is NIP but not strongly NIP.
\end{ex}
\begin{proof}
	The group $\Gamma$ is ordered abelian, and therefore NIP by \cite{GurSch}. But $[\Gamma:n\Gamma]=\infty$ for all $n>1$, whence not strongly NIP by \cite[Corollary 3.13]{KaSh}.
%	By \cite[Corollary 3.13]{KaSh} in a strongly NIP abelian group $G$ we have $[G:G^p]<\infty$ for all but finitely many primes $p$. As this is not true in $\Gamma$ the result follows.
\end{proof}

\begin{rem}
	In \cite{Sh863} Shelah considers a closely related example of an ordered abelian group that is not strongly dependent.
\end{rem}

\begin{ex}
	Let $\Gamma:=\mathbb Z^\mathbb N$. If $k$ is an NIP field of characteristic $0$ then $K:=k((t^\Gamma))$ is NIP by the previous lemma. It is not strongly NIP because $\Gamma$ is not strongly NIP. It is unbounded, since by the fundamental inequality it has infinitely many Kummer extensions of any prime degree $q$. Indeed, for any natural number $n$ let $\{a_1,\dots, a_n\}\in \Gamma$ be pairwise non-equivalent modulo $q\Gamma$. Let $c_1,\dots, c_n\in K$ be such that $v(c_i)=a_i$ for all $i$. Let $L\ge K$ be the extension obtained by adjoining $q^{\text{th}}$-roots for all $c_i$. Let $\Delta=v(L)$, where $v$ is identified with its unique extension to $L$. Then $\Gamma\not\subseteq q\Delta$. Otherwise $[q\Delta:q\Gamma]=[q\Delta:\Gamma][\Gamma:q\Gamma]=\infty$, whereas $[\Delta:\Gamma]\le [L:K]$ and $[q\Delta:q\Gamma]\le [\Delta:\Gamma]$. This is a contradiction. Since $n$ was arbitrary, this shows that $K$ has infinitely many Kummer extensions of degree $q$.
\end{ex}

Note that by \cite[Corollary 3.13]{KaSh} and \cite{JaSiWa2015} if $G$ is an ordered abelian group that is strongly dependent and not dp-minimal then there are finitely many primes $q$ such that $[G:qG]=\infty$. So the previous example with $G$ replacing $\Gamma$ will give an example for Theorem \ref{examples}(1), (2) and (5).

The details of the following example can be found in \cite{HalHas}:
\begin{fact}\label{poschar}
	Let $_{(2)}\mathbb Z$ be the localisation of $\mathbb Z$ at $(2)$. Let $B$ be a base for $\mathbb R$ as a vector space over $\mathbb Q$ and let $\langle B \rangle$ be the $\mathbb Z$-module generated by $B$. Let $G:=_{(2)}\mathbb Z\otimes \langle B \rangle$. Viewed as an additive subgroup of $\mathbb R$ the group $G$ is naturally ordered. It is strongly dependent but not dp-minimal.
\end{fact}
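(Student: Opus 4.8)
The plan is to reduce the statement to the known classifications of dp-minimal and of strongly dependent ordered abelian groups. First I would describe $G$ concretely. Since $\mathbb{Z}_{(2)}$ is flat over $\mathbb{Z}$ and $\langle B\rangle=\bigoplus_{b\in B}\mathbb{Z}b$ is $\mathbb{Z}$-free, $G=\mathbb{Z}_{(2)}\otimes_{\mathbb{Z}}\langle B\rangle\cong\bigoplus_{b\in B}\mathbb{Z}_{(2)}$, and the map $\sum_{b}r_b\otimes b\mapsto\sum_{b}r_b b$ is an injection of this group into $\mathbb{R}$ (injectivity: clear the odd denominators of the $r_b$ and use $\mathbb{Q}$-linear independence of $B$). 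So, up to isomorphism of ordered groups, $G$ is the group of $\mathbb{Z}_{(2)}$-linear combinations of elements of $B$ with the order induced from $\mathbb{R}$; in particular $G$ is Archimedean -- hence regular -- and densely ordered. Since every prime $q\neq 2$ is a unit of $\mathbb{Z}_{(2)}$ we get $qG=G$ for all such $q$, whereas $G/2G\cong\bigoplus_{b\in B}\mathbb{F}_2$ is an infinite $\mathbb{F}_2$-vector space; thus $[G:2G]=\infty$ while $[G:qG]=1$ for every odd prime $q$.

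Given this, the failure of dp-minimality is immediate: by the Jahnke--Simon--Walsberg classification of dp-minimal ordered abelian groups \cite{JaSiWa2015}, a dp-minimal ordered abelian group $H$ satisfies $[H:pH]\le p$ for every prime $p$, and this fails for $G$ at $p=2$. (In fact, since $\dprk$ of the group $\bigoplus_B\mathbb{Z}/2^k\mathbb{Z}$ grows with $k$ and each such group is an interpretable quotient of $G$, one even gets $\dprk(G)=\aleph_0$; but we do not need this.)

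For strong dependence, $G$ is NIP simply because it is an ordered abelian group \cite{GurSch}. To upgrade this to strong dependence I would appeal to Halevi--Hasson's analysis of strongly dependent ordered abelian groups \cite{HalHas}: the relevant instance of their criterion reduces, for regular ordered abelian groups, to there being only finitely many primes $p$ with $[H:pH]=\infty$, and $G$ has exactly one non-divisible prime, namely $2$. Equivalently, one can argue directly: $\mathrm{Th}(G)$ is quantifier-eliminable once one adjoins unary predicates for the subgroups $nG$ (of which only the $2^kG$, $k\ge 1$, are non-trivial), so a given formula $\varphi$ involves only finitely many of these predicates and boundedly many order-comparisons. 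Given an indiscernible sequence $(a_t)_{t\in I}$ and a singleton $b$, the order-comparisons cut $I$ into boundedly many convex pieces, and on each piece the residues $a_t+2^kG$ (for the finitely many relevant $k$) form an indiscernible sequence in the pure group $\bigoplus_B\mathbb{Z}/2^k\mathbb{Z}$, which has finite dp-rank and hence admits a further bounded convex refinement making each piece $b$-indiscernible. Thus $G$ is strongly dependent.

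The concrete identification of $G$ and the dp-minimality computation are routine. I expect the strong-dependence part to be the main obstacle: one must see precisely why the descending chain $G\supsetneq 2G\supsetneq 4G\supsetneq\cdots$, with infinite successive indices, does not destroy strong dependence -- that is, carry out (or correctly invoke) the analysis of indiscernible sequences in the presence of these $2^k$-divisibility predicates. Everything else is bookkeeping on top of the cited classifications.
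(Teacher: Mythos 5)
Your proposal is correct and follows essentially the same route as the paper, which states this fact without proof and refers to \cite{HalHas} for the details: non-dp-minimality comes from $[G:2G]=\infty$ via the Jahnke--Simon--Walsberg criterion, and strong dependence from the Halevi--Hasson analysis of (regular, here archimedean) ordered abelian groups with only finitely many non-divisible primes, exactly as you invoke them. One small correction: the criterion of \cite{JaSiWa2015} is that $[H:pH]$ be \emph{finite} for every prime $p$ (not $[H:pH]\le p$), but since your witness is the infinite index at $p=2$ the deduction is unaffected.
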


In positive characteristic, the situation is slightly different. The basic result is due to B\'elair \cite{BelHenselian}:
\begin{fact}\label{Belair}
	Let $(K,v)$ be an algebraically maximal Kaplansky field of characteristic $p>0$. Then $K$ is NIP as a valued field if and only if the residue field $k$ is NIP as a pure field.
\end{fact}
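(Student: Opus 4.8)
The plan is to deduce this from an Ax--Kochen--Ershov (AKE) style transfer principle for algebraically maximal Kaplansky fields, in complete analogy with the way Fact \ref{delon} is obtained from Delon's work in residue characteristic $0$; the r\^ole played there by the automatic ``tameness'' of equicharacteristic $0$ is played here by the Kaplansky hypotheses. Recall that their whole point is Kaplansky's uniqueness theorem: an algebraically maximal Kaplansky field $(K,v)$ has, up to valued isomorphism over $K$, a unique maximal immediate extension, and the relevant immediate algebraic extensions are defectless. This uniqueness is the engine that makes a back-and-forth over the residue field $k := \cO_v/\cM_v$ and the value group $\Gamma := vK$ go through.

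The easy direction is immediate: $k$ is interpretable in $(K,v)$ and NIP is preserved under interpretation, so NIP of $(K,v)$ implies NIP of $k$.

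For the converse, assume $k$ is NIP. The value group $\Gamma$ is an ordered abelian group, hence NIP by \cite{GurSch}, with nothing further to check. The core of the argument is a relative quantifier elimination: in the natural multi-sorted language (home sort $K$, residue sort, value sort, together with the residue and valuation maps, equivalently the leading-term sort $\mathrm{RV}$), every $K$-formula is, modulo the theory of $(K,v)$, equivalent to one in which all quantifiers range over the residue and value sorts only. One proves this by a standard back-and-forth whose single nontrivial amalgamation step --- lifting an isomorphism between residue fields and value groups of two valued subfields to an isomorphism of suitable valued field extensions --- is supplied, after passing to maximal immediate extensions, by Kaplansky's theorem. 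Granting such a relative quantifier elimination, with the two auxiliary sorts stably embedded, NIP transfers by the standard preservation mechanism for AKE situations (either the one underlying \cite{JaSiTransfer}, or Delon's original argument, which is insensitive to the characteristic once the relative quantifier elimination is in place): a formula witnessing the independence property in $(K,v)$ would, after the reduction, yield one in $k$ or in $\Gamma$, a contradiction. Hence $(K,v)$ is NIP.

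I expect the main obstacle to be precisely the relative quantifier elimination in positive characteristic. Unlike the equicharacteristic $0$ case, additive polynomials and Artin--Schreier extensions are genuinely present, so one must rule out defect in the algebraic extensions entering the amalgamation step; this is exactly where the Kaplansky hypotheses are used, via Kaplansky's embedding lemma, to force maximal immediate extensions to be unique and hence the back-and-forth to close up. Turning this algebraic input into a clean relative quantifier elimination with stably embedded sorts, so that the off-the-shelf NIP-preservation lemma applies, is the technical heart of B\'elair's theorem.
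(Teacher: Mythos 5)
This statement is not proved in the paper at all: it is quoted as a known result of B\'elair \cite{BelHenselian}, so there is no in-paper argument to measure you against. Your outline is, in substance, the standard route that B\'elair himself takes and that the paper echoes for the strongly dependent analogue: relative (field) quantifier elimination for algebraically maximal Kaplansky fields in a Denef--Pas style language (this is exactly what the paper later quotes as Fact~\ref{QE}), NIP of the value group from \cite{GurSch}, the easy direction by interpretability, and then a Delon-style transfer of NIP from the residue field and value group sorts to the valued field, as in the mechanism behind Fact~\ref{delon} and \cite{JaSiTransfer} (for strong NIP the paper instead invokes Claim~1.17 of \cite{Sh863}). The one caveat is that you leave the relative quantifier elimination itself -- the Kaplansky uniqueness/embedding argument made into a back-and-forth with stably embedded auxiliary sorts -- as an acknowledged black box; since that is precisely the technical content of B\'elair's theorem, your text is a correct reconstruction of the architecture of the known proof rather than a self-contained argument, which is acceptable here exactly because the paper itself treats the statement as a citation.
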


This generalises to the strongly dependent setting using the following results: 
\begin{fact}[\cite{Sh863}, Claim 1.17]
	Let $T$ be a theory of valued fields in the Denef-Pas language. If $T$ admits elimination of field quantifiers (\cite[Definition 1.14]{Sh863}) then $T$ is strongly dependent if and only if the value group and the residue field are. 
\end{fact}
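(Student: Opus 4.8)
The plan is to run an Ax--Kochen--Ershov style argument: elimination of field quantifiers reduces the combinatorics of a singleton in a model of $T$ to the combinatorics of its valuation- and residue-images, which live in the strongly dependent structures $\Gamma$ and $k$. One direction is immediate: if $T$ is strongly dependent then so is $T^{eq}$, and since $\Gamma$ and $k$ are interpretable in every model of $T$, an $\mathrm{ict}$-pattern of infinite depth in $\Gamma$ (or in $k$) would pull back to one in $T^{eq}$ solved by an element of an imaginary sort, a contradiction; so $\Gamma$ and $k$ are strongly dependent.

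For the converse, assume $\Gamma$ and $k$ are strongly dependent and use the characterisation of strong dependence recalled above (equivalently, the ``$\kappa^{\mathrm{ict}}$'' array formulation). Fix $\mathcal M\models T$, an indiscernible sequence $(a_t)_{t\in I}$ and a singleton $b$; we must split $I$ into finitely many convex pieces each indiscernible over $b$ --- equivalently, in the standard reformulation, bound the number of Dedekind cuts of $I$ across which the two halves are not mutually $b$-indiscernible. By elimination of field quantifiers, the type of $b$ over $(a_t)_t$ is controlled by the data $\theta\!\left(v(p_1(b,\bar a_{\bar t})),\dots,v(p_m(b,\bar a_{\bar t}))\right)$ for ordered-abelian-group formulas $\theta$ and polynomials $p_i$ over the prime ring, together with $\sigma\!\left(\mathrm{ac}(q_1(b,\bar a_{\bar t})),\dots\right)$ for residue-field formulas $\sigma$, plus the $\Gamma$- and $k$-sorted parts of the $a_t$, which are treated the same way. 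If $b$ lies in the $\Gamma$- or $k$-sort, no $b$ occurs inside the polynomials, so the whole configuration transfers to the structure $\Gamma\sqcup k$ with its induced structure, which by the orthogonality of the auxiliary sorts in the Denef--Pas setting is just the pure ordered-abelian-group structure disjointly united with the pure field structure, hence again strongly dependent; so we are done in that case.

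Now assume $b$ is in the valued field sort; this is the crux. Pass to an algebraically closed valued field $\Omega$ extending $(\mathcal M,v)$; after a routine re-extraction (replacing $(a_t)_t,b$ by a sequence with the same Denef--Pas $\mathrm{EM}$-type that is moreover indiscernible in $\Omega$ expanded by root-selecting functions --- legitimate because ``there is a $b$ inducing at least $N$ bad cuts'' depends only on the $\mathrm{EM}$-type) we may assume $(a_t)_t$ is already $\Omega$-indiscernible. Factor each $p(x,\bar a_{\bar t})=\ell(\bar a_{\bar t})\prod_j\bigl(x-\rho_j(\bar a_{\bar t})\bigr)$ over $\Omega$, so that $v\bigl(p(b,\bar a_{\bar t})\bigr)=v(\ell(\bar a_{\bar t}))+\sum_j v\bigl(b-\rho_j(\bar a_{\bar t})\bigr)$, and similarly for $\mathrm{ac}$. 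The terms $v(\ell(\bar a_{\bar t}))$ and the residue data not involving $b$ depend only on a $\Gamma$- resp.\ $k$-image of the indiscernible sequence, contributing finitely many bad cuts by strong dependence there. The genuinely new ingredient is the behaviour of $t\mapsto v\bigl(b-\rho_j(a_t)\bigr)$: since $\bigl(\rho_j(a_t)\bigr)_t$ is order-indiscernible, $v\bigl(\rho_j(a_s)-\rho_j(a_t)\bigr)$ is a constant $\gamma$ for $s<t$, and then the ultrametric triangle inequality forces $t\mapsto v\bigl(b-\rho_j(a_t)\bigr)$ to be either constant, or constant off a single index where it is strictly larger; the analogous statement holds for $\mathrm{ac}$. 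Consequently, up to the finitely many exceptional cuts coming from the finitely many roots involved, every $b$-bad cut is already a bad cut for the $\Gamma$- or $k$-image of $(a_t)_t$ relative to the finite tuple $\bigl(v(b-\rho_j(a_{\bar t})),\mathrm{ac}(\dots)\bigr)$, and strong dependence of $\Gamma$ and of $k$ (for finite tuples) bounds those.

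The main obstacle is precisely this last reduction. One must (i) justify the passage to $\Omega$ and the re-extraction of the indiscernible sequence without losing the witnessing element; (ii) prove the ultrametric ``eventually constant valuation-distance'' lemma and its residue-field analogue, and see that they really do localise each potential bad cut; and (iii) carry out the bookkeeping --- checking that the finitely many cuts produced exhaust all $b$-bad cuts and that strong dependence of $\Gamma$ and $k$ is available for the finite tuples that appear, so that a singleton in the field sort does not smuggle in more combinatorics than a finite tuple in $\Gamma\sqcup k$. This is the strong-dependence refinement of the Delon/Ax--Kochen--Ershov NIP transfer, and it is the heart of \cite[Claim 1.17]{Sh863}.
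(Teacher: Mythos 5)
The paper itself offers no proof of this statement: it is quoted verbatim as Claim 1.17 of Shelah's paper [Sh863], so your proposal can only be judged against the standard argument it gestures at. Your overall plan is the right one (and is the Delon/Ax--Kochen--Ershov route): the easy direction via interpretability of $\Gamma$ and $k$, and for the converse, field-quantifier elimination to reduce a formula in a field singleton $b$ to conditions on $v(p_i(b,\bar a_{\bar t}))$ and $\mathrm{ac}(p_i(b,\bar a_{\bar t}))$, followed by factoring over an algebraically closed extension so that $v(p(b,\bar a))=v(\ell(\bar a))+\sum_j v(b-\rho_j(\bar a))$ and similarly for $\mathrm{ac}$, thereby trading the field singleton for $\Gamma$- and $k$-data.

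The genuine gap is in your central ultrametric lemma. Indiscernibility of $(c_t)$, where $c_t=\rho_j(a_t)$, gives only that the \emph{type} of $v(c_s-c_t)$ for $s<t$ is constant, not that it is a constant element of $\Gamma$. The correct trichotomy is: $v(c_s-c_t)$ is constant, or depends only on $\min(s,t)$ and is strictly increasing (the pseudo-Cauchy case), or depends only on $\max(s,t)$ and is strictly decreasing. In the pseudo-Cauchy case your conclusion fails outright: if $b$ is a pseudo-limit of $(c_t)$, then $t\mapsto v(b-c_t)$ is strictly increasing, not ``constant off a single index''. That case is precisely where the content of the claim lies: there $v(b-\rho_j(a_t))$ genuinely varies with $t$ and cannot be absorbed into a fixed finite tuple of $\Gamma$-parameters; one must show that these values agree with the internal radii $v(c_t-c_{t'})$ of the sequence up to at most one cut and one new parameter (and argue the analogous statement for the angular components in $k$), and only then invoke strong dependence of $\Gamma$ and $k$ over finitely many added parameters. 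Since your write-up both misstates this lemma and explicitly defers items (i)--(iii) --- the re-extraction over the algebraic closure with a surviving witness, the corrected case analysis, and the bookkeeping for roots depending on tuples $\bar a_{\bar t}$ of indices rather than single indices --- it is an outline of the correct strategy rather than a proof, and the key step would fail as written.
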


\begin{fact}[\cite{BelHenselian}, Theorem 4.4]\label{QE}
	Algebraically maximal Kaplansky fields  of equi-characteristic $(p,p)$ admit elimination of field quantifiers in the Denef-Pas language. 
\end{fact}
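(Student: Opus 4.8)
The statement is quoted from \cite{BelHenselian}, so I only indicate the shape of a proof. The plan is to establish \emph{relative} quantifier elimination, i.e.\ elimination of quantifiers over the valued-field sort with quantifiers over the residue-field sort $k$ and the value-group sort $\Gamma$ retained, via the standard embedding (back-and-forth) criterion. Fix two algebraically maximal Kaplansky valued fields $(K_1,k_1,\Gamma_1)$ and $(K_2,k_2,\Gamma_2)$ of equi-characteristic $(p,p)$, presented in the Denef--Pas language (field sort with $v$ and an angular component map $\mathrm{ac}$, residue sort $k$, value-group sort $\Gamma$), with $(K_2,k_2,\Gamma_2)$ sufficiently saturated. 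Let $A$ be a common Denef--Pas substructure and $a\in K_1$; it suffices to show that the given partial embedding, which is elementary in the $k$- and $\Gamma$-sorts, extends to one whose domain contains $a$.

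First I would carry out the reductions. Replacing $A$ by the Denef--Pas structure it generates, we may assume $A$ consists of a valued subfield $F_i\subseteq K_i$ together with subsets of $k_i$ and $\Gamma_i$ containing $\mathrm{res}(F_i)$ and $v(F_i)$. Since the $k$- and $\Gamma$-sorts retain their quantifiers, by saturation I may first enlarge the residue and value-group parts of the domain so that any residue or value that adjoining $a$ will produce already has a designated image on the other side. With this arranged, the only remaining obstruction is the case where $a$ generates an \emph{immediate} extension of $F_1$: iterating the moves ``subtract the element of $F_1$ closest in value to $a$'' and ``divide by an element of matching value'' produces a pseudo-Cauchy sequence $(a_\nu)$ in $F_1$ with no pseudo-limit in $F_1$ but with pseudo-limit $a$. (Henselianity of the $F_i$, which is automatic since algebraically maximal valued fields are henselian, disposes of the subcase where $a$ is algebraic over $F_1$ of ``Hensel type''.)

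The core is Kaplansky's dichotomy for the pseudo-Cauchy sequence $(a_\nu)$. If $(a_\nu)$ is of transcendental type over $F_1$, then $(f(a_\nu))$ is a pseudo-Cauchy sequence of transcendental type over $F_2$, which by saturation has a pseudo-limit $b\in K_2$; the classical fact that adjoining a pseudo-limit of a transcendental-type pc-sequence yields a valued-field extension unique up to valued-field isomorphism over $F_1$ lets me set $a\mapsto b$, and $\mathrm{ac}$ is matched because the relevant residues were pre-arranged. If $(a_\nu)$ is of algebraic type, take a minimal-degree polynomial $\theta$ ``fixed'' by the sequence. Here the Kaplansky hypotheses on $k_1$ and $\Gamma_1$ ($k_1$ perfect with no finite separable extension of degree divisible by $p$, and $\Gamma_1$ $p$-divisible) guarantee that the immediate algebraic extension generated by a root of $\theta$ is unique of its kind, while algebraic maximality of $K_1$ forces a root $a'$ of $\theta$ realising this pc-sequence to lie already in $K_1$; a corresponding root lies in $K_2$ by saturation, so I match $a'$ across and then treat the original $a$ by the transcendental case over $F_1(a')$. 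Iterating this procedure along a well-ordering of $K_1$ yields the back-and-forth, hence elimination of field quantifiers.

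The main obstacle --- precisely the reason the hypotheses ``algebraically maximal'' and ``Kaplansky'' appear --- is the defect phenomenon in positive characteristic: without them an immediate algebraic extension may carry defect, a pc-sequence of algebraic type need not determine its limit, and the Ax--Kochen--Ershov-style matching collapses. Concretely, the step that must be made airtight is the algebraic-type case: one must prove (this is Kaplansky's theorem, using $p$-divisibility of the value group together with the $p$-independence/Artin--Schreier condition on the residue field) that an algebraically maximal Kaplansky field is defectless on its finite subextensions, so that the root $a'$ above is genuinely pinned down by the pc-sequence and its image under $f$ is well defined. The residue-field and value-group quantifiers that survive this argument are exactly what makes the conclusion elimination of \emph{field} quantifiers rather than full quantifier elimination.
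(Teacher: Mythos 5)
This statement is an imported result: the paper does not prove it, but quotes it verbatim from B\'elair (\cite{BelHenselian}, Theorem 4.4), with the subsequent remark pointing to \cite{HalHasQE} for a self-contained treatment, so there is no in-paper proof to compare your sketch against. Your outline is, in substance, the standard route taken in those sources: relative quantifier elimination in the Denef--Pas language via a back-and-forth over a saturated model, reduction (after matching residue-field and value-group data, which keep their quantifiers) to immediate extensions, and then Kaplansky's pseudo-Cauchy dichotomy, with the transcendental-type case handled by uniqueness of the extension by a pseudo-limit and the algebraic-type case handled by the Kaplansky hypotheses together with algebraic maximality. Two points in your sketch deserve sharpening. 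First, henselianity is automatic for $K_1,K_2$ (algebraically maximal fields are henselian) but \emph{not} for the substructure fields $F_i$; the standard fix is to replace each $F_i$ by its henselization inside $K_i$, which is legitimate because henselizations are immediate extensions with the universal embedding property, but this step should be stated rather than attributed to the $F_i$ being algebraically maximal. Second, in the algebraic-type case the assertion that a root $a'$ of $\theta$ pseudo-limiting the sequence ``lies already in $K_1$'' is exactly where the nontrivial valuation theory enters: one needs that algebraically maximal Kaplansky fields are defectless and that immediate extensions determined by an algebraic-type pc-sequence are unique up to valued-field isomorphism under Kaplansky's hypothesis, facts you correctly flag at the end but which are the actual mathematical core rather than a routine consequence of the definition of algebraic maximality. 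With those two points made precise, the sketch is a faithful outline of the cited proof.
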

The combination of the last two facts extends Fact \ref{Belair} to the strongly NIP case in analogy with Fact \ref{delon}:
\begin{cor}
	Let $k$ be an infinite NIP field (of equi-characteristic $(p,p)$) and $\Gamma$ an ordered abelian group. Then $k((t^\Gamma))$ is NIP provided that, if $p=\mathrm{char}(k)>0$, then $\Gamma$ is $p$-divisible. It is strongly NIP if and only if $k$ and $\Gamma$ are.
\end{cor}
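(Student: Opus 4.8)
The plan is to deduce this corollary by combining the two quoted facts (Fact \ref{QE} and Shelah's Claim 1.17 on strongly dependent valued fields in the Denef--Pas language) together with B\'elair's Fact \ref{Belair}, essentially mimicking the structure of the equi-characteristic $0$ argument encapsulated in Fact \ref{delon} and the lemma following it. First I would reduce to the case $p = \mathrm{char}(k) > 0$, since for $\mathrm{char}(k) = 0$ the statement is exactly the lemma already proved from Fact \ref{delon} (the hypothesis on $p$-divisibility of $\Gamma$ being vacuous there). So assume from now on that $k$ has characteristic $p > 0$ and that $\Gamma$ is $p$-divisible.

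The first step is to check that $K := k((t^\Gamma))$, with its natural Hahn valuation $v$, is an algebraically maximal Kaplansky field of equi-characteristic $(p,p)$. Maximal completeness of Hahn series gives algebraic maximality (in fact maximality), and equi-characteristic $(p,p)$ is immediate since $k$ has characteristic $p$. The Kaplansky condition requires that the residue field $k$ be perfect (in characteristic $p$ this is one of Kaplansky's hypotheses; here I would invoke that NIP fields are perfect, as recorded in the preliminaries), that the value group $\Gamma$ be $p$-divisible (this is exactly our standing hypothesis), and the condition on $p$-th roots/Artin--Schreier extensions of the residue field, which again follows from $k$ being NIP hence perfect and Artin--Schreier closed. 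With this in hand, Fact \ref{QE} says that the theory of $(K,v)$ admits elimination of field quantifiers in the Denef--Pas language.

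Now for the NIP clause: $(K,v)$ is an algebraically maximal Kaplansky field of characteristic $p$, so Fact \ref{Belair} applies directly and gives that $K$ is NIP as a valued field if and only if $k$ is NIP as a pure field; since we assumed $k$ NIP, $K$ is NIP as a valued field, and since the valuation on a Hahn field is henselian (indeed the field is maximally complete) one concludes $K$ is NIP as a pure field as well --- or, more carefully, one only needs NIP as a valued field for the applications, exactly as in the $\mathrm{char} 0$ lemma. For the strongly NIP clause, apply Shelah's Claim 1.17 to $T = \mathrm{Th}(K,v)$ in the Denef--Pas language: by the previous paragraph this $T$ admits elimination of field quantifiers, so $T$ is strongly dependent if and only if the value group $\Gamma$ and the residue field $k$ are strongly dependent. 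Reading off both directions of this biconditional gives that $k((t^\Gamma))$ is strongly NIP if and only if $k$ and $\Gamma$ are, as required.

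The main obstacle I anticipate is purely bookkeeping rather than conceptual: making sure the hypotheses of ``algebraically maximal Kaplansky field of equi-characteristic $(p,p)$'' are verified cleanly for $k((t^\Gamma))$ --- in particular pinning down exactly which divisibility/perfectness conditions enter the definition of ``Kaplansky'' in the source \cite{BelHenselian} and confirming they are all consequences of ``$k$ NIP'' plus ``$\Gamma$ $p$-divisible''. A secondary point of care is the passage between ``NIP/strongly NIP as a valued field'' and ``as a pure field'': for the valued-field statements Fact \ref{Belair} and Claim 1.17 suffice directly, and since Hahn fields are henselian one does not lose anything, but the statement of the corollary should be read (as in the preceding lemma) with this understood. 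No genuinely new model-theoretic input beyond the cited facts is needed.
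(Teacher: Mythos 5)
Your proposal is correct and follows essentially the same route as the paper, which presents this corollary as a direct combination of B\'elair's Fact \ref{Belair}, the field-quantifier elimination of Fact \ref{QE}, and Shelah's Claim 1.17, with the characteristic $0$ case already handled via Fact \ref{delon}. One small caution: the preliminaries record that \emph{strongly} NIP fields are perfect (NIP fields need not be), so the perfectness of $k$ required for the Kaplansky hypothesis is not justified the way you phrase it in the merely-NIP clause --- though this imprecision is equally present in the paper's own formulation, which offers no further argument.
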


\begin{rem}
	Though in \cite{BelHenselian} B\'elair does not claim Fact \ref{QE} for algebraically maximal Kaplansky fields in mixed characteristic his proof seems to work equally well in that setting. A more self contained proof is available in \cite{HalHasQE}. Combined with \cite[Proposition 5.9]{HalHas} we get that for the last sentence in the above corollary to hold (for algebraically closed Kaplansky fields of any characteristics) we do not need the value group and the residue field to be pure. This gives a strongly dependent version of \cite[Theorem 3.3]{JaSiTransfer}. 
\end{rem}

It is natural to ask whether all NIP fields constructed as Hahn series satisfy Shelah's conjecture, namely, whether they all support a definable henselian valuation. It follows immediately from Corollary \ref{groups} that:
\begin{prop}\label{ConjForHahn}
	Let $k$ be an NIP field, $\Gamma$ an ordered abelian group which is $p$-divisible if $\mathrm{char}(k)=p>0$. Then $K:=k((t^\Gamma))$ is either algebraically closed, or real closed or it supports a definable non-trivial valuation.
\end{prop}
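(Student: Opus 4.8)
The plan is to reduce everything to Corollary \ref{groups} together with the structure theory of Hahn fields. First I would dispose of the trivial cases: if $K=k((t^\Gamma))$ is algebraically closed or real closed there is nothing to prove, so assume it is neither. The Hahn field carries its canonical (maximally complete, hence henselian) valuation $v_t$ with residue field $k$ and value group $\Gamma$; in particular $K$ is henselian, and a fortiori $t$-henselian. Moreover, since $k$ is NIP and $\Gamma$ is $p$-divisible when $\mathrm{char}(k)=p>0$, the field $K$ is NIP (by Fact \ref{delon} in equicharacteristic $0$, and by Fact \ref{Belair}/the preceding Corollary in equicharacteristic $p$, noting that $p$-divisibility of $\Gamma$ makes $K$ a Kaplansky field which is algebraically maximal because it is maximally complete). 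Also recall that NIP fields are Artin--Schreier closed, which is part of what Corollary \ref{groups} needs.

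Next I would verify that $K$ is not separably closed, since Corollary \ref{groups} requires this. A maximally complete field with residue field $k$ and value group $\Gamma$ is separably closed iff it is algebraically closed iff $k$ is algebraically closed and $\Gamma$ is divisible (for equicharacteristic $0$; in equicharacteristic $p$ algebraic maximality plus these two conditions again forces algebraic closure). Since we assumed $K$ is not algebraically closed, it is not separably closed. Now apply Corollary \ref{groups}: $K$ is an NIP $t$-henselian field that is neither separably closed nor real closed, so for any finite extension $L\ge K$ and any prime $q$ with $(L^\times)^q\neq L^\times$, the group $G_q(L)=(L^\times)^q$ satisfies conditions (V1)--(V6), i.e.\ $\mathcal N_{G_q(L)}$ is a basis of $0$-neighbourhoods for a V-topology on $L$.

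It remains to produce such an $L$ and $q$ and to invoke Fact \ref{corVAxiomsnontrivialdefinable}. Exactly as in the discussion following Fact \ref{corVAxiomsnontrivialdefinable}: since $K$ is not separably closed and (being NIP) all its finite extensions are Artin--Schreier closed, by \cite[Lemma 2.4]{KrVal} there is a finite extension $L_0\ge K$ and a prime $q\neq\mathrm{char}(K)$ with $(L_0^\times)^q\neq L_0^\times$; enlarging $L_0$ further we may assume $\sqrt{-1}\in L$ and that $L$ contains a primitive $q$th root of unity $\zeta_q$ (and when $q=2$ the condition $\sqrt{-1}\in L$ guarantees $L$ is not euclidean), while $(L^\times)^q\neq L^\times$ is preserved. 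By Corollary \ref{groups}, $\mathcal N_{G_q(L)}$ is the basis of a V-topology on $L$, so Fact \ref{corVAxiomsnontrivialdefinable} yields a non-trivial $\emptyset$-definable valuation on $L$. Finally, if $\mathcal O$ is a non-trivial valuation ring on the finite extension $L$, then $\mathcal O\cap K$ is a non-trivial valuation ring on $K$, and it is definable in $K$ since $L$ is interpretable in $K$; hence $K$ supports a definable non-trivial valuation.

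The only genuinely delicate point is the bookkeeping around the characteristic hypotheses: one must make sure that the $p$-divisibility assumption on $\Gamma$ really does place $k((t^\Gamma))$ inside the scope of the NIP transfer results in equicharacteristic $p$ (Kaplansky plus algebraically maximal, via maximal completeness), and that passing to the finite extension $L$ does not destroy the hypotheses of Corollary \ref{groups} — but both are handled verbatim by the discussion already given in Section \ref{prelims}, so the proof is essentially a citation of Corollary \ref{groups} once the NIP-ness and non-separable-closedness of $K$ are in place. I expect no serious obstacle beyond this routine verification.
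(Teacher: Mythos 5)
Your proposal is correct and follows essentially the same route as the paper, whose proof of Proposition \ref{ConjForHahn} is precisely the observation that it follows from Corollary \ref{groups} (via the NIP transfer results for Hahn fields and Fact \ref{corVAxiomsnontrivialdefinable}, passing to a finite extension $L$ and restricting the definable valuation back to $K$). Your extra care in checking that such a Hahn field is separably closed only if it is algebraically closed fills in a point the paper leaves implicit, but it does not change the argument.
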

%\begin{proof}
%	If $k$ is real (resp. algerbaically) closed and $\Gamma$ is divisible then $K$ is real (resp. algebraically) closed. Otherwise, $K$ is not real (resp. separably) closed as real (separably) closed valued fields have divisible value groups and real (algebraically) closed residue fields. So we may conclude by Corollary \ref{Q2toQ1}.
%\end{proof}

This answers Question (1) for Hahn fields. Whether NIP Hahn fields support a definable \emph{henselian} valuation is more delicate. In positive characteristic this follows from  \cite[Corollary 3.18]{JanKo}. Proposition 4.2 of that same paper provides a positive answer (in any characteristic) in case $K=k((t^\Gamma))$ and $\Gamma$ is not divisible. It seems, however, that the general equi-characteristic 0 case remains open. 
In some cases we can be even more precise. E.g., Hong, \cite{Hong} gives conditions on the value group implying the definability of the natural (Krull)  valuation on $k((t^\Gamma))$:
\begin{fact}
	Let $(K, \mathcal O)$ be a henselian field. If the value group contains a convex $p$-regular subgroup that is not $p$-divisible, then $\mathcal O$ is definable in the language of rings.
\end{fact}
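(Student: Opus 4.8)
The statement is Hong's; here is how one could re‑derive it with the tools above. The plan is to obtain $\mathcal{O}$ from a definable coarsening of $v$ by refining inside an interpretable residue field, iterating until the portion of the value group carrying the non‑$p$‑divisibility has been isolated as an archimedean group, where $p$‑th powers then recover the valuation outright via Fact~\ref{corVAxiomsnontrivialdefinable}.

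I would start with the usual reductions. Since $\mathcal{O}=\mathcal{O}_v$ is the restriction to $K$ of the unique prolongation of $v$ to any finite extension $L\ge K$, and $L$ is interpretable in $K$, we may pass to a finite extension; by the fundamental inequality $v(L^\times)$ still contains a convex $p$‑regular non‑$p$‑divisible subgroup (the convex hull of $\Delta$), so we may assume $\zeta_p\in K$, and — modulo the separate argument required when $\mathrm{char}(K)=p$ or when the residue characteristic equals $p$ (there one cannot work with $p$‑th powers and must invoke the known positive/mixed‑characteristic definability results) — that neither $\mathrm{char}(K)$ nor the residue characteristic is $p$. As $\Delta$ is not $p$‑divisible, neither is $v(K^\times)$; hence $v$ is non‑trivial and $K$ is neither real closed nor separably closed, so $K$ is $t$‑henselian, and the argument of Corollary~\ref{groups} (which uses only $t$‑henselianity together with $K$ being neither real nor separably closed) shows that $G:=(K^\times)^p$ satisfies (V1)--(V6); thus $\mathcal{N}_G$ is a $V$‑topology basis and Fact~\ref{corVAxiomsnontrivialdefinable} produces a definable non‑trivial valuation whose topology $\mathcal{T}_G$ is, since $G$ is $v$‑open by Hensel's Lemma, coarser than $\mathcal{T}_v$; so $\mathcal{O}_G$ is a definable \emph{coarsening} of $\mathcal{O}_v$.

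Now iterate. At the $i$‑th stage we have a henselian valued field $(K_i,v_i)$ — a residue field of a coarsening of $(K,v)$ — whose value group $\Gamma_i$ contains a convex $p$‑regular non‑$p$‑divisible subgroup, and it suffices to define $\mathcal{O}_{v_i}$. If $\Gamma_i$ is archimedean then it is itself $p$‑regular and not $p$‑divisible, so $(K_i^\times)^p\ne K_i^\times$ and, by the same reasoning as above, $\mathcal{O}_{G_i}$ (for $G_i:=(K_i^\times)^p$) is a definable non‑trivial coarsening of $v_i$; but a rank‑one valuation has no proper non‑trivial coarsening, so $\mathcal{O}_{G_i}=\mathcal{O}_{v_i}$, which is therefore definable, and we stop. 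Otherwise let $M_i\subsetneq\Gamma_i$ be the largest proper convex subgroup; since in residue characteristic $\ne p$ every coarsening of $v_i$ makes $(K_i^\times)^p$ open, $\mathcal{T}_{G_i}$ is the coarsest non‑trivial coarsening topology, i.e.\ $\mathcal{O}_{G_i}$ is the definable coarsening $w_i$ of $v_i$ with value group $\Gamma_i/M_i$. Then $\mathcal{M}_{w_i}$ is definable from $\mathcal{O}_{w_i}$, the residue field $\bar K_i=\mathcal{O}_{w_i}/\mathcal{M}_{w_i}$ is interpretable in $K_i$, and it carries the induced henselian valuation $v_{i+1}$ of value group $M_i$ — a convex subgroup of a $p$‑regular non‑$p$‑divisible group, hence again of that kind — so the inductive hypothesis is restored. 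Finally $\mathcal{O}_{v_i}=\{x\in\mathcal{O}_{w_i}:\ x+\mathcal{M}_{w_i}\in\mathcal{O}_{v_{i+1}}\}$, and pulling a defining formula for $\mathcal{O}_{v_{i+1}}$ through the interpretation of $\bar K_i$ makes this definable in $K_i$; at $i=0$ this yields the formula for $\mathcal{O}$.

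The genuine difficulty is that this loop must terminate: a priori the chain $\Gamma\supsetneq M_0\supsetneq M_1\supsetneq\cdots$ need not be finite. This is precisely the point at which one must use that $\Delta$ is $p$‑\emph{regular} and not merely non‑$p$‑divisible — the latter does not suffice, since then $(K^\times)^p$ may detect only a proper coarsening of $v$ rather than $v$ itself. The structure theory of $p$‑regular ordered abelian groups confines the non‑$p$‑divisibility of $\Delta$ to a single archimedean convex subquotient, and the work lies in turning this into a bound on the number of coarsenings needed — equivalently, into a single first‑order description of $\mathcal{O}$ — while also handling the positive‑ and mixed‑characteristic cases that the reductions above set aside.
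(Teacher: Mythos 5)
Your proposal sets out to re-derive a result that the paper itself does not prove but quotes as a black box from Hong \cite{Hong}, so the only fair comparison is with that paper; judged on its own terms, the sketch is not a proof, because the step you yourself label ``the genuine difficulty'' is precisely the content of the theorem. What the machinery of Fact~\ref{corVAxiomsnontrivialdefinable}, the argument of Corollary~\ref{groups} and Schmidt's theorem actually gives you is a definable non-trivial valuation \emph{dependent} with $v$, i.e.\ inducing the same V-topology; this is far weaker than definability of $\mathcal{O}$ itself, and your attempts to upgrade it are not sound. In particular, every non-trivial coarsening of $v_i$ induces exactly the topology $\mathcal{T}_{v_i}$ (comparable non-trivial valuations are dependent), so there is no ``coarsest non-trivial coarsening topology'': topological information cannot single out the coarsening with value group $\Gamma_i/M_i$, and the identification $\mathcal{O}_{G_i}=\mathcal{O}_{w_i}$ is asserted rather than proved. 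Likewise, dependence alone does not make $\mathcal{O}_{G_i}$ comparable with $\mathcal{O}_{v_i}$, so even the rank-one termination step needs an argument you have not supplied. Most importantly, $p$-regularity of $\Delta$ --- the hypothesis that carries all the weight in Hong's theorem --- is never actually used; it appears only in the closing remark that it ``should'' bound the recursion.

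The recursion itself is also defective as stated. The ``largest proper convex subgroup'' $M_i$ need not exist: if the set of archimedean classes of $\Gamma_i$ has no greatest element, the union of all proper convex subgroups is all of $\Gamma_i$. Even when each step makes sense, the chain $\Gamma\supsetneq M_0\supsetneq M_1\supsetneq\cdots$ can be infinite for value groups of infinite rank, and an unbounded nesting of interpretations of residue fields produces no single formula in the language of rings, hence no definability statement at stage $0$; a proof must instead exhibit one formula, which is what Hong does by using $p$-regularity to confine the failure of $p$-divisibility of $\Delta$ to a single archimedean slice and defining the corresponding valuation ring directly from $(K^\times)^p$. Finally, the equal- and mixed-characteristic-$p$ cases, where $(K^\times)^p$ is not a $v$-open subgroup and the Koenigsmann--Dupont apparatus as you invoke it is unavailable, are simply deferred to unspecified ``known results''; these cases are part of the statement (and are exactly the ones relevant to the paper's positive-characteristic Hahn fields), so setting them aside is a genuine omission rather than a harmless reduction.
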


%On the residue field side, an argument of Scanlon's (reproduced in \cite[Proposition 3.7]{JanNIP}) gives:
%\begin{fact}
%	Let $(K,\mathcal O)$ be a henselian NIP field with imperfect residue field. Then $\mathcal O$ is definable in the language of rings.
%\end{fact}

In all the examples discussed in the present section, the source of the complexity of the field (unbounded, strongly dependent not dp-minimal etc.) can be traced back to the value group of the natural (power series) valuation. For example, as shown in \cite{JaSiWa2015}, an ordered abelian group $\Gamma$ is dp-minimal if and only if $[\Gamma:p\Gamma]$ is finite for all primes $p$. By Theorem \ref{classification} dp-minimal fields are henselian with dp-minimal value groups. We note that it also follows from the same theorem that dp-minimal fields are bounded. Indeed\footnote{This argument was sugegsted to us by I. Efrat. Any mistake is, of course, solely, ours.}, for any Henselian NIP field $(K,v)$ with $\mathrm{char}(K)=\mathrm{char}(Kv)$ we have that $G_K\cong T \rtimes G_k$ where $G_K, G_k$ are the respective absolute  Galois groups of $K$ and $k=Kv$, and $T$ is the inertia group (see \cite[Theorem 22.1.1]{EfBook} and use the fact that $K$ has no extensions of degree divisible by $\mathrm{char }(K)$). If $K$ is dp-minimal then $T=\prod_{l\in \Omega} \mathbb Z_l^{\dim_{\mathbb F_l}\Gamma/l\Gamma}$ for a certain set of primes $\Omega$. Since $\Gamma:=vK$ is dp-minimal, this implies that $T$ is small. Since $k$ is either real closed, algebraically closed or elementarily equivalent to  a finite extension of $\mathbb Q_p$, also $G_K$ is small. If $(K,v)$ is of mixed characteristic, the exact same argument works if $v$ has no coarsening $w$ of equi-characteristic $0$. Otherwise, decompose $K\xrightarrow{w}Kw\xrightarrow{\bar v} Kv$ and note that $G_{Kw}$ is small by what we have just written, so $K$ is small by our argument for equi-characteristic $0$. 

It seems, therefore, natural to ask whether the complexity of the value group in the above examples can be recovered definably. Can any (model theoretic) complexity of an NIP field be traced back to that of an ordered abelian group:
\begin{qu}
	Let $K$ be a non separably closed NIP field. Does $K$ interpret a dp-minimal field? If $K$ is not strongly dependent (resp. dp-minimal) is $K$ either imperfect or admits an (externally) definable non-trivial henselian valuation with a non strongly-dependent (resp. dp-minimal) value group? 
\end{qu}

\section{The Axioms of V-Topologies for $\cT_G$}\label{AxiomsRevisted}

We are now returning to that construction of V topologies from multiplicative subgroups, as described in Section \ref{prelims}. Throughout this section no model theoretic assumptions are made, unless explicitly stated otherwise.

For ease of reference, we remind the axioms of V topology:
\begin{de}
	A collection of subsets $\cN$ of a field $K$ is a basis of 0-neighbourhoods for a V-topology on $K$ if is satisfies the following axioms:
	\begin{description}
			\item[\textbf{\textup{(V\,1)}}]  $\bigcap \mathcal{N}:=\bigcap_{U\in\mathcal{N}}U=\left\{0\right\}$ and $\left\{0\right\}\notin\mathcal{N}$;
			\item[\textbf{\textup{(V\,2)}}]  $\forall\, U,\,V\ \exists\, W\   W\subseteq U\cap V$;
			\item[\textbf{\textup{(V\,3)}}] $\forall\, U\   \exists\,  V\   V-V\subseteq U$;
			\item[\textbf{\textup{(V\,4)}}] $\forall\, U\   \forall\, x,\,y\in K\   \exists\, V\   \left(x+V\right)\cdot\left(y+V\right)\subseteq x\cdot y+U$;
			\item[\textbf{\textup{(V\,5)}}]  $\forall\, U\   \forall\, x\in K^{\times}\  \exists\, V\   \left(x+V\right)^{-1}\subseteq x^{-1}+U$;
			\item[\textbf{\textup{(V\,6)}}]  $\forall\, U\   \exists\, V\   \forall\, x,\,y\in K\   x\cdot y\in V\longrightarrow x\in U\, \vee\,  y\in U$.
	\end{description}
\end{de}

\noindent\textbf{Notation:} From now on $G$ will denote a multiplicative subgroup of $K^\times$ with $-1\in G$ and $T:= G+1$. We let $\cN_G:=\left\{\bigcap_{i=1}^n a_i\cdot T: a_i\in K^\times\right\}$, as defined in the opening paragraphs of Section \ref{prelims}.
%maybe we should have a look how much we need to change if we omit the assumption -1\in G

In this setting the first part of \textbf{\textup{(V\,1)}} is automatic, and \textbf{\textup{(V\,2)}} holds by definition:
\begin{lem}\label{lemV1}\label{lemV2}\begin{enumerate}
		\item 	$\bigcap \cN_G=\left\{0\right\}$.
		\item 	$\forall\, U,\,V\ \exists\, W\    W\subseteq U\cap V$.
	\end{enumerate}
\end{lem}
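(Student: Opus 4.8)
The plan is to verify the two assertions directly from the definition of $\cN_G$, both of which are essentially bookkeeping.

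For part (1), the inclusion $\{0\}\subseteq\bigcap\cN_G$ is immediate since $0\in a\cdot T$ for every $a\in K^\times$: indeed $0 = a\cdot(-1+1)$ and $-1\in G$, so $0\in a\cdot(G+1)=a\cdot T$; hence $0$ lies in every finite intersection of sets $a_i\cdot T$, i.e. in every element of $\cN_G$, and therefore in $\bigcap\cN_G$. For the reverse inclusion, suppose $x\neq 0$; I need to exhibit some $U\in\cN_G$ with $x\notin U$. Since $G\neq K^\times$, pick $c\in K^\times\setminus G$. Consider $U:=a\cdot T = a\cdot(G+1)$ for a suitable $a\in K^\times$: we have $x\in a\cdot(G+1)$ iff $x/a-1\in G$. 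So I want to choose $a$ so that $x/a-1\notin G$. Equivalently, writing $y=x/a$, I want $y-1\notin G$ with $y=x/a$ ranging over all of $K^\times$ as $a$ does; so it suffices to find $y\in K^\times$ with $y-1\notin G$. If $2\in G$ (which holds whenever $\ch K\neq 2$, since then $2=(-1)\cdot(-2)$... this needs care) — rather than chase cases, note simply: if $y-1\in G$ for all $y\in K^\times$, then in particular $G\supseteq\{y-1: y\in K\setminus\{0\}\}$, and also $0\in G$ would fail, so one argues $G = K^\times$, a contradiction. I will spell this out: for any $g\in K^\times$, we have $g = (g+1)-1$; if $g+1\neq 0$ then $g+1\in K^\times$ and so $g=(g+1)-1\in G$ by hypothesis, while if $g+1=0$ then $g=-1\in G$ already. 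Hence $G=K^\times$, contradiction. So some $y_0\in K^\times$ has $y_0-1\notin G$; set $a:=x/y_0$ and $U:=a\cdot T$, so $x\notin U\in\cN_G$. This proves $\bigcap\cN_G=\{0\}$.

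For part (2), given $U=\bigcap_{i=1}^n a_i\cdot T$ and $V=\bigcap_{j=1}^m b_j\cdot T$ in $\cN_G$, simply take $W:=\bigcap_{i=1}^n a_i\cdot T\ \cap\ \bigcap_{j=1}^m b_j\cdot T$. This is a finite intersection of sets of the form $a\cdot T$ with $a\in K^\times$, hence $W\in\cN_G$ by definition, and clearly $W=U\cap V\subseteq U\cap V$. This closes the argument.

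There is no real obstacle here; the only point requiring a moment's thought is the reverse inclusion in (1), where one must use the hypothesis $G\neq K^\times$ to find an element $y_0$ with $y_0-1\notin G$, and then rescale by an appropriate $a\in K^\times$. Everything else is formal from the description of $\cN_G$ as the set of finite intersections of multiplicative translates $a\cdot T$.
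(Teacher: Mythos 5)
Your proof is correct, and in substance it follows the same route as the paper: part (2) is immediate from the definition of $\cN_G$ as the set of finite intersections of sets $a\cdot T$, and for part (1) you, like the paper, note $0\in a\cdot T$ for all $a$ (using $-1\in G$) and then exhibit, for each $x\neq 0$, a single basic set $a\cdot T$ avoiding $x$. The only difference is how the witness is produced: you invoke the standing hypothesis $G\neq K^\times$ to find some $y_0\in K^\times$ with $y_0-1\notin G$ and then rescale by $a=x/y_0$, whereas the paper simply observes that $x\notin x\cdot T$ for every $x\in K^\times$, since $x\in x\cdot T$ would force $1\in G+1$, i.e.\ $0\in G$, which is impossible as $G\leq K^\times$. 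In your notation this amounts to taking $y_0=1$ (indeed $1-1=0\notin G$), which works with no appeal to $G\neq K^\times$ at all; your contradiction argument is valid but detours through an unneeded hypothesis, and the abandoned aside about whether $2\in G$ should simply be deleted.
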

\begin{proof}
	For every $x\in K^\times$ we have $x\not\in x\cdot  T\in \cN_G$. As $-1\in G$ further $0\in x\cdot  T$ for every $x\in K^\times$. Hence $\bigcap \cN_G=\left\{0\right\}$. This proves (1), item (2) holds by the definition of $\cN_G$.
\end{proof}

We will come back to the second part of Axiom~\textbf{\textup{(V\,1)}} later. Axiom~\textbf{\textup{(V\,3)}} is simplified as follows:
\begin{lem}\label{lemV3V3'}
	The following are equivalent
	\begin{description}
		\item[\textbf{\textup{(V\,3)}}] $\forall\, U\   \exists\,  V\   V-V\subseteq U$.
		\item[\textbf{\textup{(V\,3)$'$}}] $  \exists\,  V\    V-V\subseteq  T$.
		\item[\textbf{\textup{(V\,3)}$^*$}] $\forall\, U\   \exists\,  V\   V+V\subseteq U$.
	\end{description}
\end{lem}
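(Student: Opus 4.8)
The plan is to prove the three equivalences in the order (V\,3) $\Rightarrow$ (V\,3)$'$ $\Rightarrow$ (V\,3)$^*$ $\Rightarrow$ (V\,3), exploiting the multiplicative homogeneity of the base $\cN_G$: every element of $\cN_G$ is of the form $\bigcap_{i=1}^n a_i T$, and if $U=\bigcap_{i=1}^n a_i T\in\cN_G$ then $c\cdot U=\bigcap_{i=1}^n (ca_i)T\in\cN_G$ for any $c\in K^\times$. This scaling property is what collapses the ``$\forall U$'' quantifier to a single instance.

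First I would observe that (V\,3) $\Rightarrow$ (V\,3)$'$ is immediate by specializing $U:=T$, which is a legitimate element of $\cN_G$ (take $n=1$, $a_1=1$). For (V\,3)$'$ $\Rightarrow$ (V\,3)$^*$: suppose $V-V\subseteq T$ for some fixed $V\in\cN_G$. Given an arbitrary $U=\bigcap_{i=1}^n a_i T\in\cN_G$, I want to produce $W\in\cN_G$ with $W+W\subseteq U$. The idea is to take $W:=\bigcap_{i=1}^n a_i V$, which lies in $\cN_G$ since $V$ itself is a finite intersection of scaled copies of $T$ and hence so is each $a_i V$, and their intersection is again such. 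Then $W+W\subseteq \bigcap_i (a_i V + a_i V) = \bigcap_i a_i(V+V)$. Now I need $V+V\subseteq T$; this follows from $V-V\subseteq T$ together with the fact that $-1\in G$, hence $-V=V$ (since $V=\bigcap a_j T$ and $-a_jT = a_j(-G+1)\cdot(\text{adjust})$ — more carefully, $-1\in G$ gives $-T = -G+(-1)\cdot 1$, and since $-G=G$ we get $-T = G - 1$; one checks $-T$ and $T$ differ by the sign, but what is actually used is $0\in V$ and the symmetry, so $V+V = V-(-V)=V-V\subseteq T$ when $-V=V$). So $W+W\subseteq\bigcap_i a_i T = U$, giving (V\,3)$^*$.

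Finally (V\,3)$^*$ $\Rightarrow$ (V\,3): given $U\in\cN_G$, apply (V\,3)$^*$ to get $V$ with $V+V\subseteq U$; then $V-V = V+(-V)\subseteq V+V\subseteq U$ using again that $-V=V$ (or just $-V\subseteq$ some element and re-absorbing), so $V$ witnesses (V\,3).

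The step I expect to require the most care is the passage involving $-V=V$ and $V+V\subseteq T$, i.e.\ making precise how $-1\in G$ propagates through the finite intersections defining elements of $\cN_G$. Concretely one should verify that for $W=\bigcap_{i=1}^n a_i T$ one has $-W = \bigcap_{i=1}^n (-a_i)T$ and that $-a_i T = a_i T$ is \emph{not} generally true — rather, the clean fact is $-T = -(G+1) = (-G)-1 = G-1$, and $G-1 = -(1-G) = -(-G+1)$; combined with the neighbourhood-of-zero description $\cN_G = \{\bigcap a_i(-G+1)\}$ (valid without sign assumptions) and $-G+1 = G+1$ when $-1\in G$, one gets that $\cN_G$ is stable under $x\mapsto -x$. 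Granting this symmetry, all the inclusions above are routine bookkeeping with finite intersections and scalar multiplication, and the lemma follows.
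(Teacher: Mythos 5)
Your overall route is the same as the paper's: the multiplicative homogeneity of $\cN_G$ collapses the universal quantifier over $U$ to the single instance $U=T$ (your $W=\bigcap_i a_i\cdot V$ is, after expanding $V$, exactly the paper's $V'=\bigcap_{i,j}a_i b_j\cdot T$). However, there is a genuine gap in how you handle the sign. In both steps (V\,3)$'\Rightarrow$(V\,3)$^*$ and (V\,3)$^*\Rightarrow$(V\,3) you invoke $-V=V$ for the chosen witness $V\in\cN_G$, and this is false in general: $-1\in G$ only gives $-T=G-1$, which need not equal $G+1=T$ (for instance $G=\{\pm1\}\le\mathbb{Q}^\times$ gives $T=\{0,2\}$ and $-T=\{0,-2\}$), so a typical $V=\bigcap_j b_j\cdot T$ is not symmetric. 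Your final paragraph correctly notes that $-a_iT=a_iT$ fails, but what you then derive, namely that $\cN_G$ is stable under $x\mapsto -x$ as a family ($-W\in\cN_G$ whenever $W\in\cN_G$), is strictly weaker than what your inclusions actually use, which is symmetry of the particular witness; ``granting this symmetry'' therefore does not close the argument, and $V-V\subseteq T$ does not by itself yield $V+V\subseteq T$.

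The repair is one line and is precisely the paper's move: replace the witness $V$ by $V^{*}:=V\cap(-V)$. Since $-V=\bigcap_j(-b_j)\cdot T\in\cN_G$ and $\cN_G$ is closed under finite intersections, $V^{*}\in\cN_G$, it is symmetric, and $V^{*}-V^{*}\subseteq V-V\subseteq T$, hence also $V^{*}+V^{*}=V^{*}-V^{*}\subseteq T$; with $V^{*}$ in place of $V$ your scaling computation goes through verbatim, and the same substitution fixes (V\,3)$^*\Rightarrow$(V\,3). Note that the paper sidesteps the issue in one direction by proving (V\,3)$'\Rightarrow$(V\,3) directly, where no symmetry is needed because only $V-V$ occurs, and symmetrizes just once at the end to pass to (V\,3)$^*$; your ordering through (V\,3)$^*$ is legitimate, but it makes the symmetrization unavoidable, so it must be stated explicitly rather than asserted as $-V=V$.
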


\begin{proof}
	%	Specializing \textbf{\textup{(V\,3)}} to $U= T$ gives \textbf{\textup{(V\,3)$'$}}. We will prove the other direction.
	The first implication is obvious.
	By \textbf{\textup{(V\,3)$'$}} there exists $ V=\bigcap_{j=1}^n b_j\cdot  T\in \cN_G$ such that $V-V\subseteq  T$.
	Let $ U=\bigcap_{i=1}^m a_i\cdot T\in \cN_G$.
	For all $i\in \{1,\ldots, m\}$, $j\in \{1,\ldots, n\}$.
	Let  $V':=\bigcap_{i=1}^m\bigcap_{j=1}^{n}\left( a_i\cdot b_{j}\cdot T\right)$. Then by direct computation
	\[
	V'-V'
	\subseteq\bigcap_{i=1}^m a_i\cdot\left(V-V\right)
	\subseteq \bigcap_{i=1}^m a_i\cdot T= U.
	\]
	This shows that \textbf{\textup{(V\,3)}} follows from \textbf{\textup{(V\,3)$'$}}. Replacing $V$ with $V\cap (-V)$ (throughout) we may assume that $V=-V$, proving the equivalence with \textbf{\textup{(V\,3)}$^*$}
\end{proof}

%The following is easy to see:
%\begin{lem}\label{lemV3V3*}
%
%		\textbf{\textup{(V\,3)$'$}} is equivalent to
%		\textbf{\textup{(V\,3)*}} $\forall\, U\   \exists\,  V\    V+V\subseteq U$
%\end{lem}
%
%\begin{proof}
%	 Let $U,\, V$ such that $V-V\subseteq U$. Define $V':=V\cap(-V)\in \cN_G$. Then $V'+V'\subseteq U$.
%		
%		Analogous if $V+V\subseteq U$ for $V':=V\cap(-V)\in \cN_G$ we have $V'-V'\subseteq U$.
%		
%		Hence \textbf{\textup{(V\,3)*}} is equivalent to \textbf{\textup{(V\,3)}} and thus by Lemma~\ref{lemV3V3'} to \textbf{\textup{(V\,3)$'$}}.
%\end{proof}

In order to simplify Axiom~\textbf{\textup{(V\,4)}} we need:
\begin{lem}\label{lemV4xy0case}
	If $ \exists\, V\   V\cdot V\subseteq  T$ then
	$\forall\, U\    \exists\, V\   V\cdot V\subseteq U$.
\end{lem}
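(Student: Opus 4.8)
The plan is to imitate the $V$-scaling trick already used in the proof of Lemma \ref{lemV3V3'}, exploiting that multiplication (unlike addition) is exactly the operation that scales nicely across the generators $a_i\cdot T$ of $\cN_G$. So suppose $V_0\in\cN_G$ satisfies $V_0\cdot V_0\subseteq T$, say $V_0=\bigcap_{j=1}^n b_j\cdot T$ with all $b_j\in K^\times$. Given an arbitrary $U=\bigcap_{i=1}^m a_i\cdot T\in\cN_G$, I would first observe that for each fixed $i$ we have $(a_i\cdot V_0)\cdot V_0 = a_i\cdot(V_0\cdot V_0)\subseteq a_i\cdot T$, since scaling one factor of a product by $a_i\in K^\times$ scales the product by $a_i$. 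This is the key algebraic identity.

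The issue is that $a_i\cdot V_0$ depends on $i$, so I need a single $V\in\cN_G$ that is simultaneously contained in all the $V_0$ and in all the $a_i\cdot V_0$ in a way that still squares into each $a_i\cdot T$. The natural candidate is $V:=V_0\cap\bigcap_{i=1}^m(a_i\cdot V_0)=\bigcap_{j=1}^n b_j\cdot T\;\cap\;\bigcap_{i=1}^m\bigcap_{j=1}^n a_i b_j\cdot T$, which is a finite intersection of sets of the form (element of $K^\times$)$\cdot T$, hence lies in $\cN_G$. Then for each $i$, one factor can be taken from $V\subseteq V_0$ and the other from $V\subseteq a_i\cdot V_0$, giving $V\cdot V\subseteq V_0\cdot(a_i\cdot V_0)=a_i\cdot(V_0\cdot V_0)\subseteq a_i\cdot T$. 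Taking the intersection over $i$ yields $V\cdot V\subseteq\bigcap_{i=1}^m a_i\cdot T=U$, as desired. One should be slightly careful here: to conclude $V\cdot V\subseteq a_i\cdot T$ it suffices that for every $x,y\in V$ the product $xy$ lies in $a_i\cdot T$, and this follows from $x\in V_0$, $y\in a_iV_0$ (or vice versa), so the argument is genuinely about the set $V\cdot V=\{xy:x,y\in V\}$ rather than needing a product decomposition of $V$ itself.

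The display summarising the computation would read
\[
V\cdot V\;\subseteq\;\bigcap_{i=1}^m\bigl(V_0\cdot(a_i\cdot V_0)\bigr)\;=\;\bigcap_{i=1}^m a_i\cdot(V_0\cdot V_0)\;\subseteq\;\bigcap_{i=1}^m a_i\cdot T\;=\;U .
\]
I do not expect any real obstacle: the only mildly delicate point is making sure $V$ is written as a finite intersection of sets $c\cdot T$ with $c\in K^\times$ so that it is a legitimate element of $\cN_G$, and that the generators $b_j$ and $a_ib_j$ are all nonzero, which is automatic. This mirrors exactly the bookkeeping in Lemma \ref{lemV3V3'}, and no $V$-topology axioms beyond the hypothesis are invoked.
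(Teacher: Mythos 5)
Your proof is correct and takes essentially the same route as the paper: the neighbourhood $V:=V_0\cap\bigcap_{i=1}^m a_i\cdot V_0$ you build is exactly the set $V'$ used there, and the estimate $V\cdot V\subseteq\bigcap_{i=1}^m a_i\cdot(V_0\cdot V_0)\subseteq\bigcap_{i=1}^m a_i\cdot T=U$ is the same computation (your writing the key step as an inclusion rather than the paper's equality is, if anything, the more careful formulation).
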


\begin{proof}
	Let $U=\bigcap_{i=1}^ma_i\cdot T\in \cN_G$.
	By assumption there exist\\
	$ V=\bigcap_{j=1}^n b_j\cdot T\in \cN_G$ such that $V\cdot V\subseteq  T$.
	
	Let $  V':=\bigcap_{i=1}^{m}\left(\left(\bigcap_{j=1}^n a_i\cdot  b_j\cdot T\right)\cap \left(\bigcap_{j=1}^n b_j\cdot T\right)\right)\in\cN_G$. Then by direct computation
	\[
	V'\cdot V'
	=\bigcap_{i=1}^{m} a_i\cdot\left(V\cdot V\right)
	\subseteq \bigcap_{i=1}^ma_i\cdot T= U.
	\]
	This proves the claim.
\end{proof}

Now we can prove:
\begin{lem}
	The axiom \\
	\textbf{\textup{(V\,4)}}
	$\forall\, U\   \forall\, x,\,y\in K\   \exists\, V\   \left(x+V\right)\cdot\left(y+V\right)\subseteq x\cdot y+U$\\
	is equivalent to the conjunction of\\
	\textbf{\textup{(V\,4)$'$}} $ \exists\, V\   V\cdot V\subseteq  T$ and \\
	\textbf{\textup{(V\,4)$''$}}
	$\forall\, x\in K\   \exists V  \left(x+V\right)\cdot\left(1+V\right)\subseteq x+ T$.
\end{lem}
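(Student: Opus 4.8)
The plan is to prove the two directions separately. For the easy direction, assume \textbf{\textup{(V\,4)}}. Taking $x=y=0$ in \textbf{\textup{(V\,4)}} and recalling that $0\cdot 0+U=U$ for any $U\in\cN_G$, we obtain for each $U$ some $V$ with $V\cdot V\subseteq U$; specialising $U= T$ gives \textbf{\textup{(V\,4)$'$}}. Taking $y=1$ in \textbf{\textup{(V\,4)}} and $U= T$ gives, for each $x\in K$, some $V$ with $(x+V)\cdot(1+V)\subseteq x+ T$, which is \textbf{\textup{(V\,4)$''$}}. So the conjunction of \textbf{\textup{(V\,4)$'$}} and \textbf{\textup{(V\,4)$''$}} is immediate from \textbf{\textup{(V\,4)}}.

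For the converse, assume \textbf{\textup{(V\,4)$'$}} and \textbf{\textup{(V\,4)$''$}}, and fix $U\in\cN_G$, $x,y\in K$. The goal is to produce $V\in\cN_G$ with $(x+V)(y+V)\subseteq xy+U$. First I would reduce to the case $y=1$ by a scaling argument: if $y\ne 0$, write $U= \bigcap_i a_i T$ and observe that $y^{-1}U=\bigcap_i (y^{-1}a_i)T\in\cN_G$, so it suffices to find $V'$ with $(x+V')(1+V')\subseteq x+ y^{-1}U$ and then... — actually more carefully, I would set $V = y^{-1}V'' \cap (\text{something})$ so that $(x+V)(y+V)=y\cdot(x/y + V/y)(1+V/y)$, and choosing $V$ so that $V/y$ works for the parameter $x/y$ against the neighbourhood $y^{-1}U$ gives the result. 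So the substantive case is $y=1$ (and separately the degenerate case $y=0$, handled below).

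For $y=1$: expand $(x+V)(1+V) = x + V + xV + V\cdot V = (x + V') $ where I want to bound $V + xV + V\cdot V$ inside $U$. Using \textbf{\textup{(V\,3)}$^*$} (available via Lemma \ref{lemV3V3'}) repeatedly, pick $U_1$ with $U_1+U_1+U_1\subseteq U$; then it suffices to find $V\subseteq U_1$ with $xV\subseteq U_1$ and $V\cdot V\subseteq U_1$. The term $xV\subseteq U_1$ is arranged by intersecting with $x^{-1}U_1$ (if $x\ne 0$; if $x=0$ the term vanishes), $V\cdot V\subseteq U_1$ is arranged by \textbf{\textup{(V\,4)$'$}} together with Lemma \ref{lemV4xy0case}, and $V\subseteq U_1$ is arranged directly; finally intersect all these $\cN_G$-sets using Lemma \ref{lemV2}. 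The hypothesis \textbf{\textup{(V\,4)$''$}} is then what lets us also absorb the cross term: more precisely, rather than hand-expanding, I would apply \textbf{\textup{(V\,4)$''$}} to get $V_0$ with $(x+V_0)(1+V_0)\subseteq x+ T$ and then scale: working with $U=\bigcap_i a_i T$, apply \textbf{\textup{(V\,4)$''$}} to each parameter $a_i^{-1}x$ to get $V_i$ with $(a_i^{-1}x+V_i)(1+V_i)\subseteq a_i^{-1}x + T$, multiply by $a_i$ to get $(x+a_iV_i)(1+V_i)\subseteq x+a_i T$, and take $V:=\bigcap_i(a_iV_i\cap V_i)\in\cN_G$; then $(x+V)(1+V)\subseteq\bigcap_i(x+a_iV_i)(1+V_i)\subseteq\bigcap_i(x+a_i T)=x+U$, where the first inclusion uses $V\subseteq a_iV_i$ and $V\subseteq V_i$. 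This is the same bookkeeping pattern as in the proofs of Lemmas \ref{lemV3V3'} and \ref{lemV4xy0case}.

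The degenerate case $y=0$: here we need $(x+V)\cdot V\subseteq U$, i.e. $xV + V\cdot V\subseteq U$; pick $U_1$ with $U_1+U_1\subseteq U$, use \textbf{\textup{(V\,4)$'$}}+Lemma \ref{lemV4xy0case} for $V\cdot V\subseteq U_1$, intersect with $x^{-1}U_1$ (or nothing if $x=0$) for $xV\subseteq U_1$, and combine via Lemma \ref{lemV2}. I expect the main obstacle to be purely organisational: carefully tracking the scaling so that the finitely many $a_i$ appearing in $U$ are each absorbed, and making sure every auxiliary set stays in $\cN_G$ (which is closed under finite intersections and under multiplication by elements of $K^\times$, the two closure properties used throughout). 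There is no conceptual difficulty beyond what Lemmas \ref{lemV3V3'} and \ref{lemV4xy0case} already illustrate; the role of \textbf{\textup{(V\,4)$''$}} is precisely to package the interaction of the cross term $xV$ with the quadratic term into a single clause that scales well.
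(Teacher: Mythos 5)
Your main line is essentially the paper's: the forward direction by specialising $(x,y,U)$, and the converse by applying \textbf{\textup{(V\,4)$''$}} at rescaled parameters and absorbing the finitely many $a_i$ defining $U$ into those parameters. Your $y=1$ computation (parameters $a_i^{-1}x$, sets $V_i$, then $V=\bigcap_i(a_iV_i\cap V_i)$) is correct and is exactly the paper's argument specialised to $y=1$; the paper does the two scalings in one step, setting $\widetilde a_i=a_i y^{-1}$, $x_i=x\widetilde a_i^{-1}$ and $V=\bigcap_i\bigl(\widetilde a_iV_i\cap yV_i\bigr)$, which also repairs your slightly off identity: $(x+V)(y+V)=y^2\bigl(x/y+V/y\bigr)\bigl(1+V/y\bigr)$, not $y\cdot(\cdots)$, so the correct target neighbourhood after dividing out is $y^{-2}U$ (or, as in the paper, fold the $a_i$ and the $y$ together so no division is needed).

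There is, however, one step that fails as written: both your first-pass expansion of the $y=1$ case and, more importantly, your entire treatment of the degenerate case $y=0$, $x\neq 0$ invoke additive absorption (choosing $U_1$ with $U_1+U_1\subseteq U$), i.e.\ \textbf{\textup{(V\,3)}$^*$}. That axiom is not available here: the present lemma asserts the equivalence of \textbf{\textup{(V\,4)}} with \textbf{\textup{(V\,4)$'$}}$\wedge$\textbf{\textup{(V\,4)$''$}} alone, and \textbf{\textup{(V\,3)$'$}} only enters as an extra hypothesis in Lemma~\ref{lemV4V3'V4'}. For $y=1$ this is harmless, since you yourself replace the expansion by the \textbf{\textup{(V\,4)$''$}}-scaling argument; but the $y=0$, $x\neq 0$ case needs a different fix. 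The cheap one is symmetry: $(x+V)(y+V)\subseteq xy+U$ is symmetric in $x$ and $y$, so this case reduces to the already-treated case of nonzero second argument with first argument $0$ (note \textbf{\textup{(V\,4)$''$}} is stated for all $x\in K$, including $x=0$); the only genuinely separate case is $x=y=0$, which is Lemma~\ref{lemV4xy0case}, as you and the paper both observe.
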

\begin{proof}
	\textbf{\textup{(V\,4)$'$}} and \textbf{\textup{(V\,4)$''$}} are special cases of \textbf{\textup{(V\,4)}}. So we prove the other implication.
	
	Let $x,y\in K$ and $ U=\bigcap_{i=1}^m a_i\cdot  T\in \cN_G$. The case $x=y=0$ is Lemma~\ref{lemV4xy0case}. So we assume that $y\neq 0$. For every $i\in \left\{1,\ldots, m\right\}$ we define $\widetilde{a}_i:=a_i\cdot y^{-1}$ and $x_i:=x\cdot\widetilde{a}_i^{-1}$. By \textbf{\textup{(V\,4)$''$}} there exists $V_i\in \cN_G$ such that
	\begin{equation}\label{eqV4subseteqxj+ T}
	\left(x_i+V_i\right)\cdot \left(1+V_i\right)\subseteq x_i+ T.
	\end{equation}
	Let $  V:=\bigcap_{i=1}^m\left(\bigcap_{j=1}^{n_i}\widetilde{a}_i\cdot V_i\right)\cap\left( \bigcap_{j=1}^{n_i}\left(y\cdot V_i\right)\right)\in\cN_G$. Then
	\begin{eqnarray*}
		\left(x+V\right)\cdot\left(y+V\right)
		&\subseteq& \bigcap_{i=1}^m\big(\widetilde{a}_i\cdot y\cdot \left(x_i+V_i\right)\cdot\left(1+V_i\right)\big)\\
		&{\subseteq} &\bigcap_{i=1}^m\big(\widetilde{a}_i\cdot y\cdot\left( x_i+ T\right)\big)= x\cdot y+U.
	\end{eqnarray*}
	Where the last inclusion follows from Equation (\ref{eqV4subseteqxj+ T}). This finishes the proof.
\end{proof}

Assuming \textbf{\textup{(V\,3)$'$}} we can simplify further:
\begin{lem}\label{lemV4V3'V4'}
	The axioms \textbf{\textup{(V\,3)$'$}} and \textbf{\textup{(V\,4)$'$}} imply axion \textbf{\textup{(V\,4)}}.
%	\begin{description}
%		\item[\textbf{\textup{(V\,4)$'$}}]$ \exists\, V \
%		V\cdot V\subseteq   T
%		$
%		and
%		\item[\textbf{\textup{(V\,3)$'$}}]
%		$ \exists\, V\
%		V-V\subseteq   T$.
%	\end{description}
%	imply
%	%\textbf{(V\,4)$''$} $  \forall\, x\in K\  \exists\, V\in \cN_G% $\\$\displaystyle
%	%	 \left(x+V\right)\cdot \left(1+V\right)\subseteq x+ T$
%	%and hence
%	\textbf{\textup{(V\,4)}}.
%	 $\forall\, U\   \forall\, x,\,y\in K\   \exists\, V\   \left(x+V\right)\cdot\left(y+V\right)\subseteq x\cdot y+U$.
\end{lem}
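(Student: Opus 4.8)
The plan is to reduce axiom \textbf{\textup{(V\,4)}} to the already-available equivalent form, namely the conjunction of \textbf{\textup{(V\,4)$'$}} and \textbf{\textup{(V\,4)$''$}} from the preceding lemma. Since \textbf{\textup{(V\,4)$'$}} is one of our hypotheses, the task is exactly to deduce \textbf{\textup{(V\,4)$''$}}, i.e.\ that for every $x\in K$ there is some $V\in\cN_G$ with $(x+V)\cdot(1+V)\subseteq x+T$. Expanding the product, $(x+V)(1+V)=x+xV+V+V\cdot V$, so it suffices to find $V$ such that $xV+V+V\cdot V\subseteq T$. The term $V\cdot V$ is controlled by \textbf{\textup{(V\,4)$'$}}, and the two linear terms $xV$ and $V$ are controlled by \textbf{\textup{(V\,3)$'$}} (repeated addition), after absorbing the scalar $x$ into the definition of $V$, exactly as scalars $a_i$ were absorbed in the proofs of Lemmas \ref{lemV3V3'}, \ref{lemV4xy0case} and the previous lemma.

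In detail, I would first fix $x\in K$; we may assume $x\neq 0$, the case $x=0$ being immediate. By \textbf{\textup{(V\,3)$'$}} (in its symmetric, "$+$" form \textbf{\textup{(V\,3)}$^*$} and iterated twice, as in Lemma \ref{lemV3V3'}) there is $W_0=\bigcap_{j} c_j\cdot T\in\cN_G$ with $W_0+W_0+W_0\subseteq T$. By \textbf{\textup{(V\,4)$'$}} and Lemma \ref{lemV4xy0case} there is $W_1\in\cN_G$ with $W_1\cdot W_1\subseteq W_0$. Now set $V:=W_1\cap x^{-1}W_0\cap\bigl(\bigcap_j c_j x^{-1}\cdot T\bigr)$, intersected with whatever finite list of the defining sets of $W_0$, $W_1$ is needed so that simultaneously $V\subseteq W_1$, $V\subseteq W_0$, $xV\subseteq W_0$, and $V\cdot V\subseteq W_0$ (the last because $V\subseteq W_1$). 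Then
\[
xV+V+V\cdot V\subseteq W_0+W_0+W_0\subseteq T,
\]
which gives $(x+V)(1+V)\subseteq x+T$, establishing \textbf{\textup{(V\,4)$''$}}. Together with the hypothesis \textbf{\textup{(V\,4)$'$}}, the previous lemma yields \textbf{\textup{(V\,4)}}.

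The only subtlety — and the part I would write out carefully — is the bookkeeping when choosing $V$: one must verify that a single $V\in\cN_G$ (a finite intersection of sets $a\cdot T$) can be chosen meeting all the constraints at once. This is routine because $\cN_G$ is closed under finite intersections (Lemma \ref{lemV2}) and because scaling a set $\bigcap a_i\cdot T$ by a fixed $\lambda\in K^\times$ again lies in $\cN_G$; so absorbing the scalar $x$ into the constraint "$xV\subseteq W_0$'' just replaces each defining set $c_j\cdot T$ of $W_0$ by $c_j x^{-1}\cdot T$. No genuinely new obstacle arises beyond this careful choice of the intersection, so the proof is short once the reduction to \textbf{\textup{(V\,4)$''$}} is in place.
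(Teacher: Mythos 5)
Your proposal is correct and follows essentially the same route as the paper: reduce to \textbf{\textup{(V\,4)$''$}} via the preceding lemma, expand $(x+V)(1+V)\subseteq x+xV+V+V\cdot V$, control the product term by Lemma \ref{lemV4xy0case} and the additive terms by \textbf{\textup{(V\,3)$'$}}, absorbing the scalar $x$ by rescaling one factor of the intersection. The paper's proof does exactly this bookkeeping with $V:=(x^{-1}\cdot V_1)\cap V_2\cap V_3$ where $V_1+V_1\subseteq T$, $V_2+V_2\subseteq V_1$, $V_3\cdot V_3\subseteq V_2$, which is just a repackaging of your $W_0+W_0+W_0\subseteq T$.
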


\begin{proof}
	By the previous lemma it will suffice to prove the lemma for $U=T$ and $y=1$. The case $x=0$ is automatic from the assumptions and Lemma \ref{lemV3V3'}. So assume $x\in K^\times$.
	By Lemma~\ref{lemV3V3'} there exist $V_1,\, V_2$ such that $V_1+V_1\subseteq  T$,  $V_2+V_2\subseteq V_1$. Further by Lemma~\ref{lemV4xy0case} there exists $V_3$ with $V_3\cdot V_3\subseteq V_2$.
	Define $V:= \left(x^{-1}\cdot V_1\right)\cap V_2 \cap V_3\in \cN_G$.
	Let $v,w\in V$.
	\begin{eqnarray*}
		\left(x+v\right)\cdot \left(1+w\right)
		 \in  x+V+x\cdot V+V\cdot V \subseteq x+V_2+x\cdot x^{-1}\cdot V_1+V_3\cdot V_3 \\ \subseteq  x+V_2+V_1+V_2 \subseteq x+V_1+V_1
		\subseteq x+ T.
	\end{eqnarray*}
	Hence $\left(x+V\right)\cdot\left(1+V\right)\subseteq x+ T$, as required.
	
	%	Now let $x=0$.
	%	Again by  Lemma~\ref{lemV3V3*} there exists $V_1$ such that $V_1+V_1\subseteq  T$ and  by  Lemma~\ref{lemV4xy0case} there exists $V_2$ such that $V_2\cdot V_2\subseteq V_1$. Define $V:=V_1\cap V_2$.
	%	Let $v,w\in V$.
	%	\begin{eqnarray*}
	%		(0+v)\cdot \left(1+w\right)
	%		&\in& V+V\cdot V\\
	%		&\subseteq& V_1+V_2\cdot V_2\\
	%		&\subseteq& V_1+V_1\\
	%		&\subseteq&  T.
	%	\end{eqnarray*}
	%	Hence $\left(x+V\right)\cdot\left(1+V\right)\subseteq x+ T$.
	%	Thus \textbf{\textup{(V\,4)$''$}} and hence, by Lemma~\ref{lemV4V4'ab}, \textbf{\textup{(V\,4)}} holds.
\end{proof}

The axiom \textbf{\textup{(V\,5)}} holds without further assumptions:
\begin{lem}\label{lemV5}
	Let $K$ be a field. Let  $G$ be a multiplicative subgroup of $K$ with $-1\in G$.
	Then \textbf{\textup{(V\,5)}} $\forall\, U\   \forall\, x\in K^{\times}\  \exists\, V\   \left(x+V\right)^{-1}\subseteq x^{-1}+U$ holds.
\end{lem}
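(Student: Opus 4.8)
The plan is to show that \textbf{\textup{(V\,5)}} follows essentially from the already-established additive axioms (V3) together with a direct multiplicative computation, using only that $-1 \in G$ and that $T = G+1$ is a union of cosets of $G$.

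First I would reduce to a normalised situation. Given $U = \bigcap_{i=1}^m a_i \cdot T \in \cN_G$ and $x \in K^\times$, I would rescale: replacing $U$ by $x^2 \cdot U$ (equivalently, running the argument with $a_i' := a_i x^{-2}$), it suffices to find $V$ with $(x+V)^{-1} \subseteq x^{-1} + U$ for the original $U$. The key algebraic identity is
\[
\frac{1}{x+v} - \frac{1}{x} = \frac{-v}{x(x+v)} = \frac{-v}{x^2}\cdot\frac{1}{1 + v/x}.
\]
So if I can arrange that $v/x$ is small enough that $1/(1+v/x)$ lies in (a bounded neighbourhood contained in) $T$ — or more precisely in $G$ up to the tolerance I need — and simultaneously that $-v/x^2$ times such an element lands in $U$, I am done. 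Concretely, since $T = G+1$ and $-1 \in G$, for $w \in G$ one has $1+w \in T$ and, crucially, $\frac{1}{1+w} = \frac{1}{1+w}$; writing $\frac{1}{1+w} = 1 - \frac{w}{1+w}$, I want $\frac{w}{1+w} \in G$, which holds iff $\frac{1}{1+w}\in G$, i.e. iff $1+w \in G$. This suggests working inside the multiplicative group directly: if $u \in K^\times$ with $u \in G$ then $u^{-1} \in G$, hence $a u^{-1} \in aG \subseteq aT \cap$ (shifts).

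The cleanest route: choose $V := \bigcap_{i=1}^m (x^2 a_i^{-1})\cdot T \ \cap\ (\text{something ensuring } x+V \subseteq x G)$. Note $x + v \in xG$ exactly when $1 + v/x \in G$, i.e. when $v/x \in G - 1 = -G+1 = G+1 = T$ (using $-1\in G$), i.e. when $v \in x\cdot T$. So take $V_0 := x\cdot T$; then for $v \in V_0$ we get $x+v \in xG$, hence $(x+v)^{-1} \in x^{-1}G$, hence $(x+v)^{-1} = x^{-1} g$ for some $g \in G$, so $(x+v)^{-1} - x^{-1} = x^{-1}(g-1) \in x^{-1}(G-1) = x^{-1}\cdot T$. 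That already gives $(x+V_0)^{-1} \subseteq x^{-1} + x^{-1}T$. To hit a general $U = \bigcap a_i T$ I would further intersect $V_0$ with the preimages forcing $(g-1)$ into $\bigcap (x a_i^{-1}) G$, using the same coset bookkeeping; since each constraint "$v \in x\cdot T$ and $g \in$ (prescribed coset of $G$)" pulls back to a finite intersection of sets of the form $b\cdot T$, the resulting $V$ is a legitimate element of $\cN_G$.

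The main obstacle I anticipate is the bookkeeping in the last step: translating the requirement "$(x+v)^{-1} - x^{-1} \in a_i T$ for each $i$" back through the map $v \mapsto (x+v)^{-1}$ into an explicit finite intersection of cosets $b_j T$ that defines $V \in \cN_G$. This is purely a coset-chasing computation — exploiting that $G$ is a subgroup so that $g \in b_j G \Leftrightarrow g^{-1} \in b_j^{-1} G$ and that $T$ is a union of $G$-cosets — and it should go through without any V-topology hypothesis, matching the claim in the statement that \textbf{\textup{(V\,5)}} \emph{holds without further assumptions}. I would double-check the edge interaction between the "small $v$" condition and the "target coset" condition to be sure the intersection is nonempty and genuinely lies in $\cN_G$, but I do not expect a genuine difficulty there.
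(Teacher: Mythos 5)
There is a genuine gap, and also a sign error in the one computation you do carry out. Since $-1\in G$ we have $-G=G$, so $-G+1=G+1=T$, but $G-1=-(G+1)=-T$, and in general $-T\neq T$ (take $G=\{\pm 1\}\leq\mathbb{Q}^\times$: then $T=\{0,2\}$ while $G-1=\{0,-2\}$). Hence your equivalence ``$x+v\in xG$ iff $v\in x\cdot T$'' is false; the correct set is $(-x)\cdot T$: if $v=-x(g+1)$ then $x+v=-xg\in xG$. This is harmless in itself, since $(-x)\cdot T\in\cN_G$ as well -- in fact it is exactly the factor $b_2\cdot T$ with $b_2=-x$ in the paper's proof -- and it gives $(x+v)^{-1}=-x^{-1}g^{-1}$, so $(x+v)^{-1}-x^{-1}=-x^{-1}(g^{-1}+1)\in -x^{-1}T$. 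For the same reason your later identity $x^{-1}(G-1)=x^{-1}T$ should read $-x^{-1}T$, and ``$T$ is a union of $G$-cosets'' is false for multiplicative cosets ($hT\neq T$ for general $h\in G$).

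The real problem is the step you defer as ``purely a coset-chasing computation''. To land in a given $U=\bigcap_i a_iT$ you must force $-x^{-1}(g^{-1}+1)\in a_i(G+1)$ for each $i$, i.e.\ $g^{-1}\in -xa_iG-xa_i-1$; this is not membership of $g$ (or of $g-1$) in a multiplicative coset of $G$, so the two facts you propose to exploit ($g\in bG\Leftrightarrow g^{-1}\in b^{-1}G$ and $G$-invariance of $T$) do not apply. Rewritten in the variable $v$ the requirement is $v/(x+v)\in -xa_iT$, a condition on a M\"obius-type image of $v$, and showing that it is implied by finitely many conditions of the form $v\in b\cdot T$ is precisely the substantive content of the lemma, not formal bookkeeping. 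The paper supplies exactly this missing piece: for $U=T$ and $x\neq -1$ it takes $V=b_1T\cap b_2T$ with the non-obvious coefficient $b_1=-x^2(1+x)^{-1}$ alongside $b_2=-x$, verifies the inclusion by a short algebraic manipulation, treats $x=-1$ separately, and then gets general $U$ by rescaling $x_i:=a_ix$ and intersecting $a_i^{-1}V_i$ (this is also the correct normalisation, rather than your replacement of $U$ by $x^2\cdot U$; and note the exceptional case $a_ix=-1$ needs the separate easy argument). So your plan is completable -- its first half coincides with the $b_2$-part of the paper's choice -- but the essential computation producing the second coset condition is absent, and the sketch of why it should be routine rests on the false identities above.
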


\begin{proof}
	We will first show
	\begin{equation}\label{eq:V5 T}
	\forall\, x\in K^{\times}\  \exists\, V\   \left(x+V\right)^{-1}\subseteq x^{-1}+ T.
	\end{equation}
	For $x=-1$ let  $V:= T$.
	We have
	$\left(x+ T\right)^{-1}=\left(-1+G+1\right)^{-1}=G^{-1}
	= G
	=x^{-1}+ T.$
	
	If $x\in K^\times\setminus\left\{-1\right\}$,
	let $b_1=-x^2\cdot \left(1+x\right)^{-1}$, $b_2=-x$ and $V:=b_1\cdot T\cap b_2\cdot T=b_1\cdot\left(G+1\right)\cap b_2\cdot\left(G+1\right)$.
	Let $z\in \left(x+V\right)^{-1}$. Let $g_1,g_2 \in G$ such that $z=\left(x+b_1\cdot g_1 +b_1\right)^{-1}=\left(x+b_2\cdot g_2 +b_2\right)^{-1}$.
	We have
	\begin{equation}\label{eqV5z}
	z=\left(x+b_2\cdot g_2 +b_2\right)^{-1}=\left(x-x\cdot g_2 -x\right)^{-1}=-x^{-1}\cdot g_2^{-1}.
	\end{equation}
	Further we have
	$z^{-1}= x+b_1+b_1\cdot g_1$ and therefore $1- b_1\cdot g_1\cdot z= \left(x+b_1\right)\cdot z$. This implies
	\begin{eqnarray*}
		z&=& \left(1-b_1\cdot g_1\cdot z\right)\cdot \left(x+b_1\right)^{-1}\\
		&=& x^{-1}+1+x\cdot g_1\cdot z\\
		&\stackrel{\textrm{\scriptsize{(\ref{eqV5z})}}}{=}& x^{-1}+1-x\cdot g_1\cdot x^{-1}\cdot g_2^{-1}\\
		&=& x^{-1}+ \left(- g_1\cdot g_2^{-1}\right)+1
		\in  x^{-1}+G+1.
	\end{eqnarray*}
	Hence $\left(x+V\right)^{-1}\subseteq x^{-1}+ T$.
	This proves Equation~(\ref{eq:V5 T}).
	
	Now let $x\in K^\times$ and $U=\bigcap_{i=1}^m a_i\cdot T\in\cN_G$.
	For every $i\in \left\{1,\ldots, m\right\}$ let $x_i:=a_i\cdot x$. By Equation~(\ref{eq:V5 T}) there exists $V_i$ such that
	\begin{equation}\label{eqV5zwei}
	\left(x_i+V_i\right)^{-1}\subseteq x_i^{-1}+ T.
	\end{equation}
	For $  V:=\bigcap_{i=1}^ma_i^{-1}\cdot V_i$
	\begin{eqnarray*}
		\left(x+V\right)^{-1}
		=\bigcap_{i=1}^m a_i\cdot\left(x_i+V_i \right)^{-1}
		\stackrel{\textrm{\scriptsize{(\ref{eqV5zwei})}}}{\subseteq} \bigcap_{i=1}^m a_i\cdot \left( x_i^{-1}+ T\right)
		= x^{-1}+U.
	\end{eqnarray*}
	Therefore \textbf{\textup{(V\,5)}} holds.
\end{proof}

The axiom \textbf{\textup{(V\,6)}} can be reduced as follows:
\begin{lem}\label{lemV6V6'}
	The following are equivalent
	\begin{description}
		\item[\textbf{\textup{(V\,6)}}]  $  \forall\, U\  \exists\,V\  \forall\,x,y\in K\ (x\cdot y\in V\to  x\in U\vee y\in U)$
		\item[\textbf{\textup{(V\,6)$'$}}]  $ \exists\, V\ \forall\,x,y\in K\ (x\cdot y\in V\to x\in  T\vee y\in  T)$.
	\end{description}
\end{lem}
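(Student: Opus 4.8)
The plan is to prove the equivalence of \textbf{\textup{(V\,6)}} and \textbf{\textup{(V\,6)$'$}} along exactly the same lines as the earlier reductions (Lemmas \ref{lemV3V3'} and \ref{lemV4xy0case}), namely: the implication \textbf{\textup{(V\,6)}} $\Rightarrow$ \textbf{\textup{(V\,6)$'$}} is trivial since $T\in \cN_G$ (take $U=T$), so the whole content is in the converse, and there the key is a scaling argument that promotes the single instance $U=T$ to an arbitrary $U=\bigcap_{i=1}^m a_i\cdot T$.

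First I would fix $U=\bigcap_{i=1}^m a_i\cdot T\in\cN_G$ and, using \textbf{\textup{(V\,6)$'$}}, pick $V_0\in\cN_G$ such that for all $x,y\in K$, $x\cdot y\in V_0$ implies $x\in T$ or $y\in T$. Write $V_0=\bigcap_{j=1}^n b_j\cdot T$. The natural candidate is
\[
	V:=\bigcap_{i=1}^m\bigcap_{j=1}^{n} a_i^2\cdot b_j\cdot T\ \in\ \cN_G,
\]
the point being that for each fixed $i$ one has $V\subseteq a_i^2\cdot V_0$. Then I would argue: suppose $x\cdot y\in V$; fix any $i\in\{1,\dots,m\}$ and set $x':=a_i^{-1}x$, $y':=a_i^{-1}y$, so that $x'\cdot y' = a_i^{-2} xy \in a_i^{-2}V \subseteq V_0$. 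By the choice of $V_0$, either $x'\in T$ or $y'\in T$, i.e. either $x\in a_i\cdot T$ or $y\in a_i\cdot T$. This holds for every $i$, but to conclude $x\in U$ or $y\in U$ I need the choice of index (whether it is $x$ or $y$ that lands in $a_i T$) to be \emph{uniform} in $i$ — that is precisely the obstacle, see below.

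The main obstacle is thus the disjunction-uniformity issue: a priori for some indices $i$ it could be $x\in a_iT$ and for others $y\in a_jT$, which would not give $x\in U$ nor $y\in U$. The standard fix (and the reason the squares $a_i^2$ appear above) is to strengthen $V$ so that landing in it forces a \emph{product} $xy$ to be so small that, after further shrinking, one of $x,y$ is already forced into $T$ regardless of $i$; concretely one iterates: first apply \textbf{\textup{(V\,6)$'$}} to get $V_0$ with the $T$-property, then use \textbf{\textup{(V\,6)$'$}} again (or \textbf{\textup{(V\,2)}}/\textbf{\textup{(V\,3)}}-style intersections) to find $V_1\subseteq V_0$ such that membership of $xy$ in $V_1$ forces one of $x,y$ into $\bigcap_i a_i^{-1}\cdot(\text{something})$; alternatively, since $-1\in G$ and $T=G+1$, one exploits that $a_iT\cap a_jT$ is again in $\cN_G$ and reduces to finitely many applications of the $m=1$ case combined with the fact that if $x\notin U$ then $x\notin a_{i_0}T$ for some fixed $i_0$, and then one runs the argument with that single $a_{i_0}$ in place of all of $U$. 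I expect the clean writeup to mirror Lemma \ref{lemV4xy0case} almost verbatim, with the only genuine subtlety being the bookkeeping that keeps the $x$-versus-$y$ alternative independent of $i$; once that is set up, the inclusion $V\cdot(\text{scaling})\subseteq$ the required set is a direct computation of the type already performed repeatedly in this section.
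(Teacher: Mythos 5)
Your reduction is set up correctly (the direction (V\,6)$\Rightarrow$(V\,6)$'$ is indeed immediate since $T\in\cN_G$, and scaling by $a_i^{2}$ is the right move for a single index), and you correctly identify the real obstacle: from $x\cdot y\in\bigcap_i a_i^{2}\cdot V_0$ you only get, for each $i$ separately, ``$x\in a_i\cdot T$ or $y\in a_i\cdot T$'', and the disjunct may depend on $i$. But you never actually overcome this obstacle, so the proof is incomplete. Your candidate $V=\bigcap_{i,j}a_i^{2}b_j\cdot T$ is missing the essential ingredient, namely the \emph{mixed} scalings $a_i\cdot a_j\cdot V_0$ for $i\neq j$. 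If one also requires $x\cdot y\in a_i\cdot a_j\cdot V_0$, then the assumption $x\notin a_i\cdot T$ and $y\notin a_j\cdot T$ gives $(xa_i^{-1})\cdot(ya_j^{-1})\notin V_0$ by the contrapositive of (V\,6)$'$, hence $x\cdot y\notin a_i a_j\cdot V_0$, a contradiction; so once $x\notin a_{i_0}\cdot T$ for some $i_0$, one concludes $y\in a_j\cdot T$ for \emph{every} $j$, i.e.\ $y\in U$. Neither of your sketched fixes supplies this: the first (``find $V_1\subseteq V_0$ forcing one of $x,y$ into $\bigcap_i a_i^{-1}\cdot(\text{something})$'') is never made precise, and the second (``run the $m=1$ argument with the single $a_{i_0}$ in place of all of $U$'') only yields $y\in a_{i_0}\cdot T$, not $y\in\bigcap_i a_i\cdot T$, which is exactly the uniformity problem you flagged, not a solution to it.

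For comparison, the paper's proof proceeds by induction on the number $m$ of factors in $U=\bigcap_{i=1}^m a_i\cdot T$: for $m=1$ it uses $a_1^{2}\cdot V$; for $m=2$ it takes $V'=a_1^{2}\cdot V\cap a_2^{2}\cdot V\cap a_1 a_2\cdot V$, where the cross term $a_1a_2\cdot V$ rules out the bad split ($x\in a_1T\setminus a_2T$, $y\in a_2T\setminus a_1T$) by the contradiction argument above, followed by a short propositional verification; and for $m\geq 3$ it intersects the neighbourhoods $V_{\neq j}$ furnished by the induction hypothesis for the $(m-1)$-fold intersections, observing that if $x\notin a_j\cdot T$ then for any two indices $k,\ell\neq j$ one gets $y\in\bigcap_{i\neq k}a_i\cdot T$ and $y\in\bigcap_{i\neq\ell}a_i\cdot T$, whose intersection is $U$. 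Either that inductive scheme or the direct argument with all pairwise products $a_ia_j\cdot V_0$ would close your gap; as written, your proposal does not.
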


\begin{proof}
	We assume \textbf{\textup{(V\,6)$'$}} and show \textbf{\textup{(V\,6)}}. We will show by induction on $m$,  that for all $a_1,\ldots, a_m\in K^\times$, there exists
	$V\in \cN_G$ such that for all $x,y\in K$, if $x\cdot y\in V$ then $x\in \bigcap_{i=1}^m a_iT$ or $y\in \bigcap_{i=1}^m a_iT$.

	Let $a_1\in K^\times$ and $U:= a_1\cdot T\in \cN_G$. By \textbf{\textup{(V\,6)$'$}} there exists $V$ such that for all $x,y\in K$, if $x\cdot y\in V$ then $x\in  T$ or $y\in  T$. Define $V':={a_1^2}\cdot V\in \cN_G$. For all $x,y\in K$ such that $x\cdot y\in V'$ we have $  x\cdot a_1^{-1}\cdot y\cdot a_1^{-1}\in a_1^{-2} \cdot V'=V$ and therefore   $x\cdot a_1^{-1}\in  T$ or $y\cdot a_1^{-1}\in  T$ and hence $x\in U$ or $y\in U$.

	Now let $a_1,a_2\in K^\times$ and $  U:=\bigcap_{i=1}^2 a_i\cdot T\in \cN_G$.
	By assumption there exists $V$ such that for all $x,y\in K$ if $x\cdot y\in V$ then $x\in  T$ or $y\in  T$.
	Define $   V'=:a_1^2\cdot V\cap a_2^2\cdot V\cap a_1\cdot a_2\cdot V $.
	Let $x,y\in K$ such that $x\cdot y\in V'$. Then $  x\cdot a_1^{-1}\cdot y\cdot a_1^{-1}\in a_1^{-2}\cdot V'\subseteq V$ and therefore as above
	\begin{equation}\label{oneina1}
	x\in a_1\cdot  T\text{ or }y\in a_1\cdot  T.
	\end{equation}
	and
	\begin{equation}\label{oneina2}
	x\in a_2\cdot  T\text{ or }y\in a_2\cdot  T.
	\end{equation}
	If, by way of contradiction, $x\cdot a_1^{-1}\notin  T$ and $y\cdot a_2^{-1}\notin  T$, then  $  x\cdot a_1^{-1}\cdot y\cdot a_2^{-1}\notin V$, implying $  x\cdot y\notin {a_1}\cdot{a_2}\cdot V\supseteq V'$ contradicting the choice of $x$ and $y$.
	Therefore
	\begin{equation}\label{xina1oryina2}
	x\in a_1\cdot T\text{ or }   y\in a_2\cdot  T.
	\end{equation}
	and, similarly,
	\begin{equation}\label{yina1orxina2}
	y\in a_1\cdot T\text{ or }   x\in a_2\cdot  T.
	\end{equation}
	A straightforward verification shows that equations (\ref{oneina1})-(\ref{yina1orxina2}) implies that if $x\cdot y\in V'$ then either $x\in U$ or $y\in U$.
	%	If $x\notin a_1\cdot  T$ we have by (\ref{oneina1}) $y\in a_1\cdot  T$ and by (\ref{xina1oryina2}) $y\in a_2\cdot  T$. And therefore $y\in U$.
	%
	%	If $x\notin a_2\cdot  T$ analogous by (\ref{oneina2})  and  (\ref{yina1orxina2})  $y\in U$.
	%
	%	If $x\in a_1\cdot  T$ and $x\in a_2\cdot  T$ we have $x\in\bigcap_{i=1}^2 a_i\cdot T= U$.
	%
	%	Therefore for all $x,y\in K$ such that $x\cdot y\in V'$ we have $x\in U$ or $y\in U$.
	
	Now let $m\geq 3$. Assume that for all $a_1,\ldots, a_{m-1}$ there exists  $V$ such that for all $x,y\in K$, if $x\cdot y\in V$ then $x\in \bigcap_{i=1}^{m-1}a_i\cdot T$ or $y\in \bigcap_{i=1}^{m-1}a_i\cdot T$.
	Let $a_1,\ldots, a_m\in K^\times$ and  $  U:=\bigcap_{i=1}^m a_i\cdot T\in \cN_G$. By induction hypothesis for every $j\in \left\{1,\ldots, m\right\}$ there exists $V_{\neq j}$ such that for all $x,y\in K$, if $x\cdot y\in V_{\neq j}$ then $\displaystyle x\in \bigcap_{\stackrel{i=1}{i\neq j}}^m a_i\cdot T$ or $\displaystyle y\in \bigcap_{\stackrel{i=1}{i\neq j}}^m a_i\cdot T$.
	Define $  V:=\bigcap_{i=1}^m V_{\neq i}$. Let $x,y\in K^\times$ such that $x\cdot y\in V$.
	If $x\in a_i\cdot  T$ for all $i\in \left\{1,\ldots, m\right\}$ then $x\in U$ and we are done.
	Otherwise let $j\in \left\{1,\ldots, m\right\}$ with $x\notin a_j\cdot  T$. Let $k,\ell\in \left\{1,\ldots, m\right\}\setminus\left\{j\right\}$ with $k\neq \ell$.
	We have $ x\cdot y\in \bigcap_{i=1}^m V_{\neq i}\subseteq  V_{\neq k}$. As $\displaystyle  x\notin a_j\cdot  T\supseteq \bigcap_{\stackrel{i=1}{i\neq k}}^m a_i\cdot  T$ we have $\displaystyle  y\in \bigcap_{\stackrel{i=1}{i\neq k}}^m a_i\cdot  T$.
	Analogous we show  $\displaystyle  y\in \bigcap_{\stackrel{i=1}{i\neq \ell}}^m a_i\cdot T$.
	Therefore  $\displaystyle  y\in \bigcap_{\stackrel{i=1}{i\neq k}}^m a_i\cdot T \cap \bigcap_{\stackrel{i=1}{i\neq \ell}}^m a_i\cdot T=U$.
	
	Hence for all $U$ there exists $V$ such that for all $x,y\in K$, if $x\cdot y\in V$ then $x\in U$ or $y\in U$.
\end{proof}

Summing up all the simplifications of the present section we obtain:

\begin{prop}\label{PropSimplifiedVAxiomsnontrivialdefinable}
	Let $K$ be a field.
	Let $\mathrm{char}\left(K\right)\neq q$ and if $q= 2$ assume $K$ is not euclidean. Assume that for the primitive $q$th-root of unity $\zeta_q\in K$. Let $G:=\left(K^\times\right)^q\neq K^\times$.
	Assume that
	\begin{description}
		\item[\textbf{\textup{(V\,1)$'$}}]   $\left\{0\right\}\notin\mathcal{N}_G$;
		%\addtocounter{enumi}{1}
		\item[\textbf{\textup{(V\,3)$'$}}]    $  \exists\,  V\    V-V\subseteq  T$
		\item[\textbf{\textup{(V\,4)$'$}}] $  \exists V \  V\cdot V\subseteq   T$
		%\addtocounter{enumi}{1}
		\item[\textbf{\textup{(V\,6)$'$}}] $ \exists\, V\ \forall\,x,y\in K\ x\cdot y\in V\to x\in  T\vee y\in  T$.
	\end{description}
	Then $K$ admits a non-trivial definable valuation.
\end{prop}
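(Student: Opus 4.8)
The plan is to deduce the statement directly from Fact~\ref{corVAxiomsnontrivialdefinable}. That fact says that, under exactly the standing hypotheses imposed here on $K$, $q$ and $G=(K^\times)^q\neq K^\times$ (namely $\mathrm{char}(K)\neq q$, $K$ not euclidean when $q=2$, and a primitive $q$th root of unity in $K$), it suffices to check that $\cN_G$ is a basis of $0$-neighbourhoods for a V-topology on $K$, i.e.\ that $\cN_G$ satisfies all of \textbf{\textup{(V\,1)}}--\textbf{\textup{(V\,6)}} of Definition~\ref{Vtop}; once this is done, Fact~\ref{corVAxiomsnontrivialdefinable} produces a non-trivial (in fact $\emptyset$-)definable valuation on $K$. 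So the entire argument is the bookkeeping of collecting the lemmas proved in this section.

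Concretely, I would verify the six axioms as follows. Axiom~\textbf{\textup{(V\,2)}} is immediate from the definition of $\cN_G$ as a family closed under finite intersections, as recorded in Lemma~\ref{lemV2}. For \textbf{\textup{(V\,1)}}: its first half, $\bigcap\cN_G=\{0\}$, is Lemma~\ref{lemV1} (using $-1\in G$, the standing notational assumption of the section), while its second half, $\{0\}\notin\cN_G$, is precisely hypothesis \textbf{\textup{(V\,1)$'$}}. Axiom~\textbf{\textup{(V\,3)}} is equivalent to \textbf{\textup{(V\,3)$'$}} by Lemma~\ref{lemV3V3'}, hence holds. Axiom~\textbf{\textup{(V\,4)}} follows from the conjunction of \textbf{\textup{(V\,3)$'$}} and \textbf{\textup{(V\,4)$'$}} by Lemma~\ref{lemV4V3'V4'}. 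Axiom~\textbf{\textup{(V\,5)}} holds with no extra hypotheses by Lemma~\ref{lemV5}. Finally, axiom~\textbf{\textup{(V\,6)}} is equivalent to \textbf{\textup{(V\,6)$'$}} by Lemma~\ref{lemV6V6'}. Thus $\cN_G$ satisfies \textbf{\textup{(V\,1)}}--\textbf{\textup{(V\,6)}}, and applying Fact~\ref{corVAxiomsnontrivialdefinable} concludes the proof.

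I do not expect a genuine obstacle here: all the real content sits in the preceding lemmas of the section, in particular in the reductions of \textbf{\textup{(V\,3)}}, \textbf{\textup{(V\,4)}} and \textbf{\textup{(V\,6)}} to their primed forms. The only points requiring care are (i) matching the residual hypotheses of Fact~\ref{corVAxiomsnontrivialdefinable} with the standing hypotheses of the proposition, which is routine, and (ii) the fact that the specific form of $\cN_G$ used throughout ($\cN_G=\{\bigcap_{i=1}^n a_i\cdot T\}$ with $T=G+1$) presupposes $-1\in G$; this is in force, both as the section's standing notation and, concretely, because $-1=(-1)^q\in(K^\times)^q$ when $q$ is odd, while for $q=2$ one works after the harmless reduction to $\sqrt{-1}\in K$ discussed at the start of Section~\ref{prelims}.
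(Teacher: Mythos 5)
Your proposal is correct and coincides with the paper's own proof, which likewise just cites Lemmas~\ref{lemV1}, \ref{lemV2}, \ref{lemV3V3'}, \ref{lemV4V3'V4'}, \ref{lemV5} and \ref{lemV6V6'} to verify \textbf{\textup{(V\,1)}}--\textbf{\textup{(V\,6)}} and then applies Fact~\ref{corVAxiomsnontrivialdefinable}. Your extra care about $-1\in G$ (automatic for odd $q$, handled by the standing reduction for $q=2$) is consistent with the section's standing notation and does not change the argument.
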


\begin{proof}
	With Lemma~\ref{lemV1}, Lemma~\ref{lemV2}, Lemma~\ref{lemV3V3'}, Lemma~\ref{lemV4V3'V4'}, Lemma~\ref{lemV5} and Lemma~\ref{lemV6V6'}  the result follows directly from 	Corollary~\ref{corVAxiomsnontrivialdefinable}.
\end{proof}

\section{Back to NIP fields}\label{secNIP}
As already explained in the opening sections, our main motivation in the present paper is to study the existence of definable valuations on (strongly) NIP fields. We also hope that such a project may shed some light on the long standing open conjecture that stable fields are separably closed. We have already explained that in the stable case this conjecture can be rather easily settled under the further assumption that the field is bounded. It is therefore natural to ask whether the same assumption can help settle the questions stated in the Introduction. In the present section we show how boundedness gives quite easily Axiom [\textbf{\textup{(V\,1)$'$}}] (stating that  $\left\{0\right\}\notin\mathcal{N}_G$).

%We have already explained the relevance of the current paper to this question in the Preliminaries section. In the present section we show how, using NIP, one can hope to push this line of research further.

%We remind that a field is \emph{bounded}\footnote{In the literature e.g., \cite{PilPoi}, \cite{PiScWa} a slightly stronger condition is used. The restriction to separable extensions seems, however, more natural and even implicitly implied in some applications.} if for all $n\in \mathbb N$ it has finitely many separable extensions of degree $n$. Super-simple fields are bounded,\cite{PilPoi}, and conjecturally, so are all simple fields. As pointed out to us by F. Wagner, it follows, e.g., from \cite[Theorem 5.10]{PoiGroups} that bounded stable fields are separably closed.
%
%For the sake of completeness we give a different proof, essentially, due to Krupinski, with a less stability-theoretic flavour: Let $K$ be a bounded stable field. Since stability implies NIP $K$ and all its finite extensions are, as already mentioned, Artin-Schreier closed. By an easy strengthening of \cite[Lemma 2.4]{KrVal}, it will suffice to show that $K^q=K$ for all prime $q\neq \mathrm{char}(K)$. Boundedness\footnote{In \cite{KrVal} Krupinski introduces the slightly weaker \emph{radical boundedness}, which suffices for the argument.} implies that were this not the case for some $q$ we would have $1<[K^\times, (K^\times)^q]<\infty$. By \cite[Proposition 4.8]{KrSRNIP} this implies that $K$ is unstable (in fact, that the formula $\exists z(x-y=z^q)$ has the order-property).

If $K$ is an infinite NIP field, i.e. a field definable in a monster model satisfying NIP, then by \cite[Corollary~4.2]{KrSRNIP}  there is a definable additively and multiplicatively invariant Keisler measure on $K$.
In the whole section if not stated differently let $K$ be an infinite NIP field and $\mu$ an additively and multiplicatively invariant definable Keisler measure on $K$.

By \cite[Proposition~4.5]{KrSRNIP} for any definable subset $X$ of $K$ with $\mu(X)>0$ and any $a\in K$, we have
\[\mu\left(\left(a+X\right)\cap X\right)=\mu\left(X\right).\tag{$\clubsuit$}\]
%By induction we have:
%
%\begin{lem}\label{lemIntFullMeasure}
%	Let $a_0,\ldots, a_m\in K$ and $X\subseteq K$ be definable.
%	Then\\  $\mu\left(\bigcap_{i=0}^m\left(a_i+X\right)\right)=\mu\left(X\right)$.
%\end{lem}

\begin{lem}\label{propV1ForGgenerel}
	Let $a_1,\ldots, a_m\in K^\times$ and $G\subseteq K^\times$ a multiplicative subgroup with $-1\in G$ and $\mu(G)>0$.
	Then
	$
	\bigcap_{i=1}^m a_i  T\supsetneq\{0\}.
	$
\end{lem}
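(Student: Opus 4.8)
The plan is to show that the intersection $\bigcap_{i=1}^m a_i T$ has positive measure, hence cannot be $\{0\}$, since singletons have measure zero (as $K$ is infinite and $\mu$ is additively invariant, every singleton must have measure $0$, else the whole field would be a finite union of translates of a positive-measure singleton contradicting $\mu(K)=1$ together with additive invariance applied to infinitely many disjoint translates). So the real content is a lower bound $\mu\bigl(\bigcap_{i=1}^m a_i T\bigr)>0$.

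First I would recall that $T=G+1$, so $a_i T = a_i G + a_i$, a translate of $a_i G$. Since $\mu$ is multiplicatively invariant and $\mu(G)>0$, we get $\mu(a_i G)=\mu(G)>0$ for each $i$, and by additive invariance $\mu(a_i T)=\mu(a_i G+a_i)=\mu(a_i G)=\mu(G)>0$. Now I would use ($\clubsuit$) iteratively. The key point is that ($\clubsuit$) says intersecting a positive-measure definable set with an additive translate of itself does not decrease the measure. To handle an intersection of \emph{different} translated copies $a_i G$, I would set things up so that each $a_i T$ looks, up to an additive translate, like a common positive-measure set. Concretely: proceed by induction on $m$. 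For $m=1$ we are done by the previous paragraph. For the inductive step, suppose $X:=\bigcap_{i=1}^{m-1} a_i T$ has $\mu(X)>0$. I want $\mu(X\cap a_m T)>0$. Write $a_m T = a_m G + a_m$. The obstacle is that $X\cap a_m T$ is an intersection of $X$ with a translate of $a_m G$, not with a translate of $X$ itself, so ($\clubsuit$) does not apply directly.

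To get around this, I would instead exploit that $G$ is a \emph{group}: the sets $a_i T = a_i(G+1)$ all contain $0$ (since $-1\in G$) and, more importantly, are unions of cosets-translates that interact well with multiplication. The cleaner route is: fix the largest-measure set among suitable finite intersections and use invariance to absorb new factors. Actually the argument in \cite{KrSRNIP}-style goes as follows — pick $b_i\in a_iG$ arbitrary (e.g. $b_i=a_i$, since $1\in G$ gives $a_i\in a_iG$... wait, $T=G+1$ so $a_i\cdot 2\in a_iT$ when $1\in G$, and $0\in a_iT$ via $-1\in G$). The honest approach: note $a_iT \supseteq \{0\}$ and is a translate $a_iG + a_i$ of the \emph{fixed-up-to-scaling} group; rescale by dividing the whole intersection by a common factor is not possible since the $a_i$ differ. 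So I expect the main obstacle to be precisely this: controlling the measure of an intersection of translates of \emph{distinct} dilates $a_iG$ of the group $G$.

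The resolution I would pursue: since $G$ has finite index is \emph{not} assumed, but $\mu(G)>0$ forces $[K^\times:G]$ to be "small" in the measure sense — in fact, $\mu(G)>0$ together with $K^\times = \bigsqcup_{j} c_j G$ (coset decomposition) and countable/finite additivity-style bounds forces only finitely many cosets to have positive measure, but all cosets $cG$ have the same measure $\mu(G)>0$ by multiplicative invariance, so $[K^\times:G]<\infty$, say $=d$, with $\mu(G)=1/d$ (after noting $K^\times$ is conull). Then each $a_iG$ is one of these $d$ cosets, so $\bigcap_{i=1}^m a_iG$ is either empty or a single coset of measure $1/d>0$; and it is nonempty because... hmm, distinct cosets have empty intersection. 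So instead I should intersect the \emph{translates} $a_iG+a_i$, which are genuinely different affine sets. Here I would invoke ($\clubsuit$) after translating all of them to pass through $0$: replace $a_iT$ by $a_iT - a_i = a_iG$; but then the intersection changes. The correct final move, which I believe is what the paper does: show $\bigcap a_i T$ has positive measure by a direct inclusion-exclusion / invariance estimate, using that $\mu\bigl(\bigcup_{i}(K\setminus a_iT)\bigr) \le \sum_i \mu(K\setminus a_iT) = \sum_i (1-\mu(G)) $, and if this is $<1$ we are done — but that only works when $m(1-1/d)<1$, i.e. small $m$. For general $m$ one must be cleverer, presumably using that the $a_i T$ are not just any positive-measure sets but translates of cosets, so their complements' union cannot be everything. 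I would therefore structure the proof as: (1) singletons are null; (2) $[K^\times:G]=d<\infty$ and $\mu(cG)=1/d$ for every coset; (3) translate and use ($\clubsuit$) together with the coset structure of $G$ to conclude $\mu\bigl(\bigcap_{i=1}^m a_iT\bigr)=1/d>0$, the hard step being (3).
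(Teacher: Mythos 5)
Your proposal stops exactly where the real work begins: the step you label (3) --- controlling the intersection of translates of the \emph{distinct} dilates $a_iG$ --- is explicitly acknowledged as ``the hard step'' but never carried out, and the two concrete devices you do sketch (coset counting, and the union bound $\sum_i(1-\mu(a_iT))<1$) are, as you yourself note, insufficient for general $m$. So as written the argument has a genuine gap; moreover the target you set yourself, $\mu\bigl(\bigcap_{i=1}^m a_iT\bigr)=1/d>0$, is stronger than what the lemma asks for and is nowhere established.

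The paper sidesteps the difficulty by never measuring $\bigcap_{i=1}^m a_iT$ at all. Since $-1\in G$ one has $-G=G$, so $1\in a(G+s)$ if and only if $s\in G+a^{-1}$; thus the relevant sets $G+a_i^{-1}$, $i=1,\dots,m$, are \emph{additive translates of the single set} $G$, and iterating $(\clubsuit)$ gives $\mu\bigl(G\cap\bigcap_{i=1}^m(G+a_i^{-1})\bigr)=\mu(G)>0$. Pick $t_0$ in this intersection. For each $i$ we get $a_i^{-1}=g_i+t_0$ with $g_i\in G$, i.e.\ $1=a_i(g_i+t_0)$; dividing by $t_0$ (legitimate since $t_0\in G\subseteq K^\times$) yields $t_0^{-1}=a_i\bigl(g_it_0^{-1}+1\bigr)\in a_i(G+1)=a_iT$, because $G$ is a multiplicative group. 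Hence $t_0^{-1}$ is a nonzero element of $\bigcap_{i=1}^m a_iT$, which together with $0\in\bigcap_{i=1}^m a_iT$ (from $-1\in G$) proves the lemma. Note that no finiteness of $[K^\times:G]$ is needed (your deduction of finite index from $\mu(G)>0$ is correct but superfluous), and the measure is used only to certify that $G\cap\bigcap_{i=1}^m(G+a_i^{-1})$ is nonempty; the passage from that set to $\bigcap_i a_iT$ is by inversion and multiplication, which need not preserve $\mu$ --- another indication that aiming for a lower bound on $\mu\bigl(\bigcap_i a_iT\bigr)$ is the wrong target.
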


\begin{proof}
	As $-1\in G$ it follows that $0\in\bigcap_{i=1}^m a_i  T$.
	
	This also implies that
	\[G+a^{-1}=\{s: 1\in a(G+s)\}\tag{*}\]
	for any $a\in K^\times$.
	
	By additivity of the measure $(\clubsuit)$ applied to the left hand side of $(*)$ gives
	\[\mu(\bigcap_{i=1}^m (G+a_i^{-1}\cap G))=\mu(G)>0. \]
	So by the right hand side of $(*)$ we have $t_0\in \bigcap_{i=1}^m \{s\in  G: 1\in a_i(G+s)\}$. So $1\in a_i(G+t_0)$ for all $1\le i \le m$, and as $t_0\in G$ we get $t_0^{-1}\in \bigcap_{i=1}^m a_i(G+1)=\bigcap_{i=1}^m a_iT$.
	%	
	%	
	%	Let $t\in G$. Then
	%	$
	%	t\in \bigcap_{i=1}^m\left\{s\in G: 1\in a_i\cdot\left( G+s\right)\right\}
	%	\Leftrightarrow\  t\in \bigcap_{i=1}^m\left(G+a_i^{-1}\right).
	%	$
	%	By Lemma~\ref{lemIntFullMeasure} $\mu\left(\bigcap_{i=1}^m\left(G+a_i^{-1}\right)\cap(G+0)\right)=\mu(G)>0$ and therefore there exists $ t_0\in\bigcap_{i=1}^m\left\{t\in G: 1\in a_i\cdot\left( G+t\right)\right\}$.
	%	For every $i\in \{1,\ldots, m\}$ thus $t_0^{-1}\in a_i\cdot \left(t_0^{-1}\cdot  \left(G+1\right)\right)=a_i\cdot  T$ for every $i\in \{1,\ldots, m\}$ as $t_0\in G$.
	%	Hence $ 0\neq t_0^{-1}\in \bigcap_{i=1}^ma_i\cdot T$.
	%	Altogether $
	%	\bigcap_{i=1}^m a_i\cdot  T\supsetneq\{0\}.$
\end{proof}

%\begin{lem}\label{lemMeasureKqpositive}
%	 Let $(K^\times:G)<\infty$.
%	Then $\mu\left(G\right)>0$.
%\end{lem}
%
%\begin{proof}
%	As by assumption $\left|\left\{x\cdot G: x\in K^\times\right\}\right|=(K^\times:G)<\infty$, there exists $A\subseteq K$ such that $|A|<\infty$ and $\{x\cdot G: x\in K\}=\{x\cdot G: x\in A\}$.
%	We have $K^\times=\bigcup_{x\in K^\times}x\cdot G=\bigcup_{x\in A}x\cdot G$. Therefore, as $\mu$ is $\sigma$-additive and multiplicatively invariant,  $1=\mu\left(K\right)=\mu(K^\times)\leq \sum_{x\in A}\mu\left(x\cdot G\right)=|A|\cdot\mu\left(G\right)$. Hence $\mu\left(G\right)\geq \frac{1}{|A|}>0$.
%\end{proof}

\begin{cor}\label{thmV3V4V6Existsdefinablenontrivialvaluation}
	Let $K$ be an infinite NIP field with $\sqrt{-1}\in K$. Let $G:=\left(K^\times\right)^q\neq K^\times$ for some $q\neq \ch(K)$ prime with $\zeta_q\in K$. Assume that $[K^\times: G]<\infty$ and that for $T:=G+1$ we have:
	\begin{description}
		\item[\textbf{\textup{(V\,3)$'$}}]    $  \exists\,  V\    V-V\subseteq  T$
		\item[\textbf{\textup{(V\,4)$'$}}] $  \exists V \  V\cdot V\subseteq   T$
		\item[\textbf{\textup{(V\,6)$'$}}] $ \exists\, V\ \forall\,x,y\in K\ x\cdot y\in V\to x\in  T\vee y\in  T$.
	\end{description}
	Then $K$ admits a non-trivial $\emptyset$-definable valuation.
\end{cor}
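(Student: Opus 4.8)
The plan is to reduce Corollary \ref{thmV3V4V6Existsdefinablenontrivialvaluation} directly to Proposition \ref{PropSimplifiedVAxiomsnontrivialdefinable} by checking that its hypotheses are all in place. The proposition requires $\ch(K)\neq q$ (assumed), that $K$ is not euclidean when $q=2$, that a primitive $q$th root of unity lies in $K$ (assumed), that $G=(K^\times)^q\neq K^\times$ (assumed), and the four axioms \textbf{\textup{(V\,1)$'$}}, \textbf{\textup{(V\,3)$'$}}, \textbf{\textup{(V\,4)$'$}}, \textbf{\textup{(V\,6)$'$}}. Three of the last four are hypotheses of the corollary, so the only two things to verify are \textbf{\textup{(V\,1)$'$}}, i.e. $\{0\}\notin\cN_G$, and the non-euclidean condition in case $q=2$.

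First I would dispose of \textbf{\textup{(V\,1)$'$}}. Since $K$ is an infinite NIP field, \cite[Corollary~4.2]{KrSRNIP} provides an additively and multiplicatively invariant definable Keisler measure $\mu$ on $K$. Because $[K^\times:G]<\infty$, the group $G$ is a finite union of cosets of itself inside $K^\times$, hence a definable set of positive measure: from $\mu(K^\times)=1$ (the single point $0$ has measure zero) and finitely many translates of $G$ covering $K^\times$, invariance of $\mu$ forces $\mu(G)=1/[K^\times:G]>0$. Now Lemma \ref{propV1ForGgenerel} applies verbatim: for any $a_1,\dots,a_m\in K^\times$ we get $\bigcap_{i=1}^m a_iT\supsetneq\{0\}$, so no element of $\cN_G$ equals $\{0\}$. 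This is exactly \textbf{\textup{(V\,1)$'$}}.

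Next the euclidean issue when $q=2$. Here one should observe that if $K$ is euclidean then $G=(K^\times)^2$ has index exactly $2$ and the associated $\cN_G$ cannot be a V-topology basis; but rather than analyze that directly, the cleanest route is to note that a euclidean field is real (in particular formally real), hence the ordering it carries is definable from $G=(K^\times)^2$, and in a euclidean field $-1\notin (K^\times)^2$, so $\sqrt{-1}\notin K$, contradicting the standing hypothesis $\sqrt{-1}\in K$ of the corollary. Thus the case $q=2$ with $K$ euclidean simply does not arise, and the non-euclidean hypothesis of Proposition \ref{PropSimplifiedVAxiomsnontrivialdefinable} is automatically satisfied. (One must also recall that $-1\in G=(K^\times)^q$ in all cases: for $q$ odd $-1=(-1)^q\in(K^\times)^q$, and for $q=2$ we have $-1=(\sqrt{-1})^2\in(K^\times)^2$ since $\sqrt{-1}\in K$; this is needed so that the notation $T=G+1$ and the lemmas of Section \ref{AxiomsRevisted} apply.)

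With all hypotheses of Proposition \ref{PropSimplifiedVAxiomsnontrivialdefinable} verified, that proposition yields a non-trivial definable valuation on $K$; since $\zeta_q$, $\sqrt{-1}$ and the defining data of $G$ are all $\emptyset$-definable (or the whole situation is invariant under $\Aut(K)$), Fact \ref{corVAxiomsnontrivialdefinable} in fact gives a non-trivial $\emptyset$-definable valuation, as claimed. The only genuine content beyond bookkeeping is the positive-measure computation for $G$ from its finite index, which is where the NIP hypothesis (via the invariant Keisler measure) is essential; I expect no real obstacle there, as it is a one-line consequence of additivity and invariance of $\mu$ together with $[K^\times:G]<\infty$.
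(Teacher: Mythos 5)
Your proposal is correct and follows essentially the same route as the paper: establish $\mu(G)=[K^\times:G]^{-1}>0$ from additivity and invariance of the Keisler measure, invoke Lemma \ref{propV1ForGgenerel} to obtain \textbf{\textup{(V\,1)$'$}}, and then apply Proposition \ref{PropSimplifiedVAxiomsnontrivialdefinable} (with $\emptyset$-definability coming from Fact \ref{corVAxiomsnontrivialdefinable} since $G=(K^\times)^q$ is $\emptyset$-definable). The extra points you verify explicitly --- that $\sqrt{-1}\in K$ rules out the euclidean case for $q=2$, and that $-1\in G$ in all cases --- are details the paper leaves implicit, not a different argument.
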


\begin{proof}
	By additivity and invariance of $\mu$ we get that $\mu(G)=[K^\times:G]^{-1}$. The result now follows directly from  Proposition~\ref{PropSimplifiedVAxiomsnontrivialdefinable} using Lemma~\ref{propV1ForGgenerel}
\end{proof}

As mentioned in Section~1,
\[\cN_G':=\left\{\left(a_1\cdot \left(G+1\right)\right)\cap\left(a_2\cdot\left( G+1\right)\right): a_1,a_2\in K^\times\right\}\]
is a base of the neighbourhoods of zero of $\cT_G$. We obtain the following corollary:

\begin{cor}\label{remNG'}
	Let $K$ be an infinite NIP field with $\sqrt{-1}\in K$. Let $G:=\left(K^\times\right)^q\neq K^\times$ for some $q\neq \ch(K)$ prime with $\zeta_q\in K$. Assume that $[K^\times: G]<\infty$. Then for $T:=G+1$ we have that
	\begin{description}
		\item[\textbf{\textup{(V\,3)$'$}}]    $  \exists\,  V\in \cN_G \ V-V\subseteq  T$
		\item[\textbf{\textup{(V\,4)$'$}}] $  \exists V\in \cN_G \  V\cdot V\subseteq   T$
		\item[\textbf{\textup{(V\,6)$'$}}] $ \exists\, V\in \cN_G \ \forall\,x,y\in K\ x\cdot y\in V\to x\in  T\vee y\in  T$.
	\end{description}
	if and only if
	\begin{description}
		\item[\textbf{\textup{(V\,3)$'_2$}}] $  \exists\,  \widetilde{V}\in \cN_G'\    \widetilde{V}-\widetilde{V}\subseteq  T$
		\item[\textbf{\textup{(V\,4)$'_2$}}] $  \exists \widetilde{V}\in \cN_G' \  \widetilde{V}\cdot \widetilde{V}\subseteq   T$
		\item[\textbf{\textup{(V\,6)$'_2$}}] $ \exists\, \widetilde{V}\in \cN_G'\ \forall\,x,y\in K\ x\cdot y\in \widetilde{V}\to x\in  T\vee y\in  T$.
	\end{description}
\end{cor}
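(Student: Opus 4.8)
The plan is to prove the equivalence by showing that the two bases $\cN_G$ and $\cN_G'$ are comparable in a controlled way, and that the properties \textbf{\textup{(V\,3)$'$}}, \textbf{\textup{(V\,4)$'$}}, \textbf{\textup{(V\,6)$'$}} are ``absorbed'' under the transition. First I would record the easy direction: every element of $\cN_G'$ is of the form $(a_1 T)\cap(a_2 T)$, which is a (two-term) intersection appearing in $\cN_G$; hence $\cN_G'\subseteq\cN_G$, and a witness $\widetilde V\in\cN_G'$ for any of \textbf{\textup{(V\,3)$'_2$}}, \textbf{\textup{(V\,4)$'_2$}}, \textbf{\textup{(V\,6)$'_2$}} is automatically a witness for the corresponding unprimed statement. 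So the content is entirely in the other direction: from a witness $V=\bigcap_{i=1}^n a_iT\in\cN_G$ produce a witness $\widetilde V\in\cN_G'$.

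The key input here is the discussion in Section \ref{prelims}: under the standing hypotheses ($K$ infinite NIP, $\sqrt{-1}\in K$, $\zeta_q\in K$, $G=(K^\times)^q\neq K^\times$, $[K^\times:G]<\infty$) the set $\cN_G$ is a $0$-neighbourhood basis for $\cT_G$, and by \cite[Lemma~3.6 and Corollary~3.8]{Du2016} \emph{provided $\cT_G$ is a V-topology} the smaller family $\cN_G'$ is already a basis for the same topology. The route I would take is therefore: (i) first use Corollary~\ref{thmV3V4V6Existsdefinablenontrivialvaluation} — whose hypotheses are exactly \textbf{\textup{(V\,3)$'$}}, \textbf{\textup{(V\,4)$'$}}, \textbf{\textup{(V\,6)$'$}} together with the standing assumptions (Proposition~\ref{propV1ForGgenerel} supplies \textbf{\textup{(V\,1)$'$}} for free) — to conclude that $\cT_G$ is a V-topology; (ii) deduce from this that $\cN_G'$ is a $0$-neighbourhood basis for $\cT_G$; (iii) now, given the witness $V\in\cN_G$ for, say, \textbf{\textup{(V\,3)$'$}}, use that $\cN_G'$ is a basis for the \emph{same} topology: since $V$ is a $\cT_G$-neighbourhood of $0$, there is $\widetilde V\in\cN_G'$ with $\widetilde V\subseteq V$, whence $\widetilde V-\widetilde V\subseteq V-V\subseteq T$, giving \textbf{\textup{(V\,3)$'_2$}}; identically for \textbf{\textup{(V\,4)$'_2$}} ($\widetilde V\cdot\widetilde V\subseteq V\cdot V\subseteq T$) and \textbf{\textup{(V\,6)$'_2$}} (replacing $V$ by a smaller $\cN_G'$-set only weakens the hypothesis $x\cdot y\in\widetilde V$, so the conclusion $x\in T\vee y\in T$ still follows).

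I expect the main subtlety to be a small circularity concern in step (i)–(ii): Corollary~\ref{thmV3V4V6Existsdefinablenontrivialvaluation} is stated as producing a definable valuation, so I should either quote it as already giving that $\cN_G$ is the basis of a V-topology (which is what Proposition~\ref{PropSimplifiedVAxiomsnontrivialdefinable} and Fact~\ref{corVAxiomsnontrivialdefinable} really establish en route), or, to be safe, re-derive the V-topology statement directly: \textbf{\textup{(V\,1)}}--\textbf{\textup{(V\,2)}} hold by Lemmas~\ref{lemV1}, \ref{lemV2}; \textbf{\textup{(V\,3)}} by Lemma~\ref{lemV3V3'} from \textbf{\textup{(V\,3)$'$}}; \textbf{\textup{(V\,4)}} by Lemma~\ref{lemV4V3'V4'} from \textbf{\textup{(V\,3)$'$}} and \textbf{\textup{(V\,4)$'$}}; \textbf{\textup{(V\,5)}} by Lemma~\ref{lemV5}; \textbf{\textup{(V\,6)}} by Lemma~\ref{lemV6V6'} from \textbf{\textup{(V\,6)$'$}}; and the second half of \textbf{\textup{(V\,1)}} follows from Proposition~\ref{propV1ForGgenerel} since $\mu(G)=[K^\times:G]^{-1}>0$. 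Once $\cN_G$ is confirmed to be a basis for a V-topology, \cite[Corollary~3.8]{Du2016} applies to give that $\cN_G'$ is a basis of the same topology, and the comparability argument of the previous paragraph finishes both implications. Apart from this bookkeeping, everything is a one-line monotonicity estimate, so no genuinely hard step arises.
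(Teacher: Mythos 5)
Your proposal is correct and follows essentially the same route as the paper: the inclusion $\cN_G'\subseteq\cN_G$ gives one direction, and for the other the axioms \textbf{\textup{(V\,3)$'$}}, \textbf{\textup{(V\,4)$'$}}, \textbf{\textup{(V\,6)$'$}} (together with \textbf{\textup{(V\,1)$'$}}, supplied by the measure argument from NIP and $[K^\times:G]<\infty$) make $\cT_G$ a V-topology, whence $\cN_G'$ is a $0$-neighbourhood basis and one shrinks each witness $V\in\cN_G$ to some $\widetilde V\in\cN_G'$, the statements being monotone (universal for fixed $V$). Your extra care about rederiving the V-topology claim from the Section \ref{AxiomsRevisted} lemmas rather than quoting Corollary \ref{thmV3V4V6Existsdefinablenontrivialvaluation} is exactly what the paper's terse "then $\cT_G$ is a V-topology" implicitly does, so there is no substantive difference.
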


\begin{proof}
	As $\cN_G'\subseteq \cN_G$ it is clear that if \textbf{\textup{(V\,3)$'_2$}}, \textbf{\textup{(V\,4)$'_2$}} and \textbf{\textup{(V\,6)$'_2$}} hold, then so do
	\textbf{\textup{(V\,3)$'$}}, \textbf{\textup{(V\,4)$'$}} and \textbf{\textup{(V\,6)$'$}}.
	
	On the otherhand if  \textbf{\textup{(V\,3)$'$}}, \textbf{\textup{(V\,4)$'$}} and \textbf{\textup{(V\,6)$'$}} hold, then $\cT_G$ is a V-topology and $\cN_G'$ is a 0-neighbourhood basis for $\cT_G$. Therefore, for any $V\in \cN_G$ witnessing \textbf{\textup{(V\,i)}} ($i=3,4,6$), there exists $\widetilde{V}\in \cN_G$ such that $\widetilde{V}\subseteq V$, and as -- for a fixed $V$ -- the axiom \textbf{\textup{(V\,i)}} is universal, it is automatically satisfied by $\tilde V$.
	% satisfies  Therefore by  \cite[Theorem~1.1~(a)]{PrZi1978} if a local sentence holds for one base of the neighbourhoods of zero of $\cT_G$, it holds for all such bases. In particular if \textbf{\textup{(V\,3)$'$}}, \textbf{\textup{(V\,4)$'$}} and \textbf{\textup{(V\,6)$'$}} hold, then as well \textbf{\textup{(V\,3)$'_2$}}, \textbf{\textup{(V\,4)$'_2$}} and \textbf{\textup{(V\,6)$'_2$}} hold.
\end{proof}

Note that 	\textbf{\textup{(V\,3)$'_2$}}, 	\textbf{\textup{(V\,4)$'_2$}} and \textbf{\textup{(V\,6)$'_2$}} are first order sentences in the language of rings (appearing explicitly in the statement of Conjecture \ref{fo}). Let us denote their conjunction as $\psi_K$. Thus, if $K$ is a bounded\footnote{As already mentioned, "radically bounded" would suffice.} NIP field such that $K\models \psi_K$ then $K$ supports a definable valuation.

\section{Concluding remarks}
We have shown in Section \ref{prelims} that if $K$ is infinite NIP there exist a sentence $\psi_K$ (possibly with parameters) such that: 
\begin{enumerate}
	\item If $K\models \psi_K$ then $K$ admits a non-trivial definable valuation. 
	\item If $K$ is $t$-henselian then $K\models \psi_K$. 
	\item If $K^q\neq K$ for some prime $q\neq \mathrm{char}(K)$ then $\psi_K$ (and the definable valuation ring) can be taken over $\emptyset$. 
\end{enumerate}
If $K$ is as in (3) above, $\zeta_q\in K$, a primitive root of unity, and $\sqrt{-1}\in K$ the sentence $\psi_K$ is the statement that $\cN_G'$ is a neighbourhood basis for a $V$ topology for $G=(K^\times)^q$. Assuming for simplicity that the predicates $G$, $T:=1+G$ and $aT$ for $a\in K^\times$ are atomic, $\psi_K$ is the conjunction of (V1)-(V6) which is readily checked to be an AEA-sentence. We have shown in Section \ref{AxiomsRevisted} that $\psi_K$ is equivalent to the conjunction of (V1)$'$, (V3)$'$, (V4)$'$ and (V6)$'$ -- which is an EA-sentence. In the last corollary we have shown that if $K$ is bounded NIP then, in fact, (V1)$'$ automatically holds, reducing further the complexity of $\psi_K$. 

If $K$ does not satisfy (3) above (or the additional assumptions on roots of unity)  we have to replace $K$ with a finite extension $L\ge K$ satisfying the necessary assumptions. In that case $\psi_K$ has to be relativised to $L$ -- which, since $L$ is interpretable in $K$ (possibly with parameters) is not a problem, and does not change the complexity of $\psi_K$ -- provided that, as above, the corresponding field operations, the group $G$, the set $T$ and the open sets $aT$ interpreted in $L$ are assumed atomic.


\begin{thebibliography}{10}

\bibitem{BelHenselian}
L. B\'elair.
\newblock Types dans les corps valu\'es munis d'applications coefficients.
\newblock {\em Illinois J. Math.}, 43(2):410--425, 1999.

\bibitem{ChSh}
G.~Cherlin and S.~Shelah.
\newblock {Superstable fields and groups}.
\newblock {\em Ann. Math. Logic}, 18(3):227--270, 1980.

\bibitem{Chernikov}
A. Chernikov.
\newblock Theories without the tree property of the second kind.
\newblock {\em Ann. Pure Appl. Logic}, 165(2):695--723, 2014.

\bibitem{DelHenselian}
F. Delon.
\newblock Types sur {${\bf C}((X))$}.
\newblock In {\em Study {G}roup on {S}table {T}heories ({B}runo {P}oizat),
  {S}econd year: 1978/79 ({F}rench)}, pages Exp. No. 5, 29. Secr\'etariat
  Math., Paris, 1981.

\bibitem{Du2016}
K.~Dupont.
\newblock Definable valuations induced by definable subgroups.
\newblock Groups, Modules and Model Theory - Surveys and Recent
  Developments in Memory of R\"udiger G\"obel, 83--109 Springer Verlag, 2017.

\bibitem{EaKrPi}
C. Ealy, K.Krupi{\'n}ski, and A. Pillay.
\newblock Superrosy dependent groups having finitely satisfiable generics.
\newblock {\em Ann. Pure Appl. Logic}, 151(1):1--21, 2008.

\bibitem{EfBook}
Ido Efrat.
\newblock {\em {Valuations, orderings, and {M}ilnor {$K$}-theory}}, volume 124
of {\em {Mathematical Surveys and Monographs}}.
\newblock American Mathematical Society, Providence, RI, 2006.


\bibitem{EnPr}
A.~J. Engler and A. Prestel.
\newblock {\em Valued fields}.
\newblock Springer Monographs in Mathematics. Springer-Verlag, Berlin, 2005.

\bibitem{GurSch}
Y.~Gurevich and P.~H. Schmitt.
\newblock The theory of ordered abelian groups does not have the independence
  property.
\newblock {\em Trans. Amer. Math. Soc.}, 284(1):171--182, 1984.

\bibitem{HalHas}
Yatir Halevi and Assaf Hasson. 
\newblock Strongly dependent ordered abelian groups and henselian fields, 2017. 
\newblock {\em Israel J. of Math }(to appear), available at
\url{https://arxiv.org/abs/1706.03376}.

\bibitem{HaHaJa}
Y. Halevi, A. Hasson and F. Janhke. 
A conjectural classification of strongly dependent fields, 2018 {\em Bull. of Symb. Log.} (to appear), available at \url{https://arxiv.org/pdf/1805.03814.pdf}.

\bibitem{HalHasQE}

Yatir Halevi and Assaf Hasson.
\newblock Eliminating field quantifiers in strongly dependent fields, 2017.
\newblock Proc. of AMS (to appear) available at
\url{http://de.arxiv.org/abs/1707.03188}.

\bibitem{Hong}
J. Hong.
\newblock Definable non-divisible {H}enselian valuations.
\newblock {\em Bull. Lond. Math. Soc.}, 46(1):14--18, 2014.

\bibitem{HrPePi}
E. Hrushovski, Y. Peterzil, and A. Pillay.
\newblock {Groups, measures, and the {NIP}}.
\newblock {\em J. Amer. Math. Soc.}, 21(2):563--596, 2008.

\bibitem{JanNIP}
F.~{Jahnke}.
\newblock {When does NIP transfer from fields to henselian expansions?}
\newblock {\em ArXiv e-prints}, July 2016.

\bibitem{JanKo}
F. Jahnke, J. Koenigsmann:
\newblock {Definable henselian valuations}
\newblock  {\em J. of Symbolic Logic}, 80(1):85-99, 2015.

\bibitem{JahKo}
F. Jahnke, J. Koenigsmann: 
\newblock	{Uniformly defining {$p$}-henselian valuations},
\newblock {\em Ann. Pure Appl. Logic}, 166(7-8): 741-754, 2015.

\bibitem{JaSiTransfer}
F.~{Jahnke} and P.~{Simon}.
\newblock {NIP henselian valued fields}.
\newblock {\em ArXiv e-prints}, June 2016.

\bibitem{JaSiWa2015}
F.~Jahnke, P.~Simon, and E.~Walsberg.
\newblock {dp-Minimal Valued Fields}.
\newblock To appear in {\em J. of Symbolic Logic}.

\bibitem{JohnDPMin}
W.~{Johnson}.
\newblock {The canonical topology on dp-minimal fields}.
\newblock {\em Journal of Mathematical Logic, 1850007.}, 2018.

\bibitem{JohnDPMinArc}
W.~{Johnson}.
\newblock {On dp-minimal fields}.
\newblock {\em arXiv:1507.02745}, \url{http://front.math.ucdavis.edu/1507.02745}, 2015.


\bibitem{KaScWa}
I. Kaplan, T. Scanlon, and F.~O. Wagner.
\newblock Artin-{S}chreier extensions in {NIP} and simple fields.
\newblock {\em Israel J. Math.}, 185:141--153, 2011.

\bibitem{KaSh}
I. Kaplan and S. Shelah.
\newblock Chain conditions in dependent groups.
\newblock {\em Ann. Pure Appl. Logic}, 164(12):1322--1337, 2013.

\bibitem{KoepHens}
J. Koenigsmann. 
\newblock{$p$-henselian fields.}
\newblock{\em manuscripta mathematica}, 87(1): 89-99, Dec. 1995. 
\bibitem{Koe2}
J. Koenigsmann.
\newblock {Definable valuations}.
\newblock {\em Pr{\`e}publications de L'{E}quipe de {L}ogique}, 54, 1995.

\bibitem{KrSRNIP}
K. Krupi{\'n}ski.
\newblock Fields interpretable in rosy theories.
\newblock {\em Israel J. Math.}, 175:421--444, 2010.

\bibitem{KrVal}
K. Krupi{\'n}ski.
\newblock Superrosy fields and valuations.
\newblock {\em Ann. Pure Appl. Logic}, 166(3):342--357, 2015.

\bibitem{PiScWa}
A.~Pillay, T.~Scanlon, and F.~O. Wagner.
\newblock Supersimple fields and division rings.
\newblock {\em Math. Res. Lett.}, 5(4):473--483, 1998.

\bibitem{PilPoi}
A. Pillay and B. Poizat.
\newblock Corps et chirurgie.
\newblock {\em J. Symbolic Logic}, 60(2):528--533, 1995.

\bibitem{PoiGroups}
B. Poizat.
\newblock {\em Stable groups}, volume~87 of {\em Mathematical Surveys and
  Monographs}.
\newblock American Mathematical Society, Providence, RI, 2001.
\newblock Translated from the 1987 French original by Moses Gabriel Klein.

\bibitem{Sh1}
S. Shelah.
\newblock Classification theory for elementary classes with the dependence
  property---a modest beginning.
\newblock {\em Sci. Math. Jpn.}, 59(2):265--316, 2004.
\newblock Special issue on set theory and algebraic model theory.

\bibitem{Sh863}
S. Shelah.
\newblock Strongly dependent theories.
\newblock {\em Israel J. Math.}, 204(1):1--83, 2014.

\bibitem{WaBook}
F.~O. Wagner.
\newblock {\em {Simple theories}}, volume 503 of {\em {Mathematics and its
  Applications}}.
\newblock Kluwer Academic Publishers, Dordrecht, 2000.

\end{thebibliography}
\end{document}